\newtheorem{theorem}{Theorem}[section]
\newtheorem{lemma}[theorem]{Lemma}
\newtheorem{corollary}[theorem]{Corollary}
\newtheorem{definition}[theorem]{Definition}
\newtheorem{remark}[theorem]{Remark}
\numberwithin{equation}{section}
\def\set#1{{\{#1 \}}}
\def\boxspan{\mathit{span}}
\def\params{{\mathsf{q}}}
\newcommand{\R}{{\mathbb{R}}}
\newcommand{\Ze}{{\mathbb Z}}
\newcommand{\N}{{\mathbb{N}}}
\DeclareMathOperator{\diff}{d}
\newcommand{\ra}{\rightarrow}
\newcommand{\sigalg}{\mathcal{F}}
\newcommand{\filtration}{\mathds{F}}
\newcommand{\ul}{\underline}
\newcommand{\ol}{\overline}
\newcommand{\Let}{:=}
\newcommand{\EE}{\mathds{E}}
\newcommand{\PP}{\mathds{P}}
\newcommand{\traj}[3]{#1_{#2#3}}
\begin{document}

\begin{abstract}
Stochastic switched systems are a relevant class of stochastic hybrid systems with probabilistic evolution over a continuous domain and control-dependent discrete dynamics over a finite set of modes. 
In the past few years several different techniques have been developed to assist in the stability analysis 
of stochastic switched systems. 
However, more complex and challenging objectives related to the verification of and the controller synthesis for logic specifications have not been formally investigated for this class of systems as of yet. 
With logic specifications we mean properties expressed as formulae in linear temporal logic or as automata on infinite strings.  
This paper addresses these complex objectives by constructively deriving approximately equivalent (bisimilar) symbolic models of stochastic switched systems.  
More precisely, this paper provides two different symbolic abstraction techniques: one requires state space discretization, but the other one does not require any space discretization which can be potentially more efficient than the first one when dealing with higher dimensional stochastic switched systems. Both techniques provide
finite symbolic models that are approximately bisimilar to stochastic switched systems 
under some stability assumptions on the concrete model.  
This allows formally synthesizing controllers (switching signals) that are valid for the concrete system over the finite symbolic model, 
by means of mature automata-theoretic techniques in the literature. The effectiveness of the results are illustrated by synthesizing switching signals enforcing logic specifications for two case studies including temperature control of a six-room building. 
\end{abstract}

\title[Symbolic Models for Stochastic Switched Systems: A Discretization-Free Approach]{Symbolic Models for Stochastic Switched Systems: A Discretization and a Discretization-Free Approach} 

\author[M. Zamani]{Majid Zamani$^1$} 
\author[A. Abate]{Alessandro Abate$^2$}
\author[A. Girard]{Antoine Girard$^3$} 
\address{$^1$Department of Electrical Engineering and Information Technology, Technische Universit\"at M\"unchen, 80333, Munich, Germany.}
\email{zamani@tum.de}
\urladdr{http://www.hcs.ei.tum.de}
\address{$^2$Department of Computer Science, University of Oxford, OX1 3QD, Oxford, United Kingdom.}
\email{alessandro.abate@cs.ox.ac.uk}
\urladdr{http://www.cs.ox.ac.uk/people/alessandro.abate}
\address{$^3$Laboratoire Jean Kuntzmann, Universit\'e de Grenoble, 38041 Grenoble Cedex 9, France.}
\email{antoine.girard@imag.fr}
\urladdr{https://sites.google.com/site/antoinesgirard}

\maketitle

\section{Introduction}
Stochastic hybrid systems are dynamical systems 
comprising continuous and discrete dynamics interleaved with probabilistic noise and stochastic events \cite{BL06a}.  
Because of their versatility and generality, methods for analysis and design of stochastic hybrid systems  
carry great promise in 
many safety critical applications \cite{BL06a}. Examples of such applications include power networks, automotive, finance, 
air traffic control, biology, telecommunications, and embedded systems. 
Stochastic \emph{switched} systems are a relevant subclass of stochastic hybrid systems. 
They consist of a finite (discrete) set of modes of operation, 
each of which is associated to continuous probabilistic dynamics;      
further, their discrete dynamics, in the form of mode changes, are governed by a non-probabilistic control (switching) signal.   

It is known \cite{liberzon} that switched systems can be endowed with global behaviors that are not characteristic of the behavior of any of their modes:  
for instance, global instability may arise by proper choice over time of the discrete switches between a set of stable modes.  
This is but one of the many features that makes switched systems theoretically interesting.   
With focus on \emph{stochastic} switched systems, 
despite recent progresses on basic dynamical analysis focused on stability properties \cite{debasish}, 
there are no notable results in the literature targeting more complex objectives, 
such as those dealing with verification or (controller) synthesis for logical specifications. Examples of those specifications include linear temporal logic or automata on infinite strings, 
and as such they are not amenable to classical approaches for stochastic processes. 

A promising direction to investigate these general properties is the use of \emph{symbolic models}. 
Symbolic models are abstract descriptions of the original dynamics, 
where each abstract state (or symbol) corresponds to an aggregate of states in the concrete system. 
When a finite symbolic model 
is obtained and is formally put in relationship with the original system, 
one can leverage automata-theoretic techniques for controller synthesis over the finite model \cite{MalerPnueliSifakis95} 
to automatically synthesize controllers for the original system. 
Towards this goal, 
a relevant approach is the construction of finite-state symbolic models that are \emph{bisimilar} to the original system.  
Unfortunately, the class of continuous (-time and -space) dynamical systems admitting exactly bisimilar finite-state symbolic models is quite restrictive \cite{AHLP00,LPS00} and in particular it covers mostly non-probabilistic models. The results in \cite{manuela} provide a notion of exact stochastic bisimulation for a class of stochastic hybrid systems, 
however, \cite{manuela} does not provide any abstraction algorithm, nor does it look at the synthesis problem. 
Therefore, rather than requiring exact bisimilarity, 
one can resort to \emph{approximate bisimulation} relations \cite{girard}, 
which introduce a metric between the trajectories of the abstract and the concrete models, 
and require boundedness in time of this distance. 

The construction of approximately bisimilar symbolic models has been extensively studied for 
non-probabilistic control systems, possibly affected by disturbances 
\cite{majid4,pola,pola1} and references therein, 
as well as for 
non-probabilistic switched systems \cite{girard2}.  
However, stochastic systems, particularly when endowed with hybrid dynamics, 
have only been scarcely explored.   
With focus on these models,   
a few existing results deal with abstractions of discrete-time stochastic processes \cite{AAPLS07,abate1,azuma}.  
Results for continuous-time models cover probabilistic rectangular hybrid automata \cite{sproston} 
and stochastic dynamical systems under some contractivity assumptions \cite{abate}. 
Further, the results in \cite{julius1} only {\em check} the relationship between an uncountable abstraction and a given class of stochastic hybrid systems via the notion of stochastic (bi)simulation function. However, these results do not provide any {\em construction} of approximations, nor do they deal with {\em finite} abstractions, 
and moreover appear to be computationally tractable only in the case where no input is present. 
The recent results in \cite{majid8} and \cite{majid11} investigate the construction of finite bisimilar abstractions for continuous-time stochastic control systems, without any hybrid dynamics, and randomly switched stochastic systems, respectively, such that the discrete dynamics in the latter systems are governed by a random uncontrolled signal. Finally, the recently proposed techniques in \cite{majid10} improve
the ones in \cite{majid8} by not requiring state-space discretization but only input set discretization. 
In summary, 
to the best of our knowledge there is no comprehensive work on the automatic construction of finite bisimilar abstractions for continuous-time stochastic switched systems in which the discrete dynamics are governed by a non-probabilistic control signal.  

The main contributions of this work consist in showing the existence and the construction of approximately bisimilar symbolic models for incrementally stable stochastic switched systems using two different techniques: one requires state space discretization and the other one does not require any space discretization. Note that all the techniques provided in \cite{majid4,pola,pola1,girard2,AAPLS07,abate1,azuma,sproston,abate,majid8,majid11} are only based on the discretization of state sets. Therefore, they suffer severely from \emph{the curse of dimensionality} due to gridding those sets, which is especially irritating for models with high-dimensional state sets. We also provide a simple criterion in which one can choose between the two proposed approaches the most suitable one (based on the size of the abstraction) for a given stochastic switched system. Another advantage of the second proposed approach here is that it allows one to construct symbolic models with probabilistic output values, resulting possibly in less conservative symbolic abstractions in comparison with the first proposed approach and the ones in \cite{majid8,majid11} allowing for non-probabilistic output values only. Furthermore, the second proposed approach here allows one to construct symbolic models for any given precision $\varepsilon$ and any given sampling time, but the first proposed approach and the ones in \cite{majid8,majid11} may not be applicable for a given sampling time.

Incremental stability is a property on which the main proposed results of this paper rely. This type of stability requires uniform asymptotic stability of every trajectory, rather than stability of an equilibrium point or a particular time-varying trajectory. 
In this work, we show the description of incremental stability in terms of a so-called common Lyapunov function or of multiple Lyapunov functions. 
The main results are illustrated by synthesizing controllers (switching signals) for two examples. 
First, we consider a room temperature control problem (admitting a common Lyapunov function) for a six-room building. 
We synthesize a switching signal regulating the temperature toward a desired level which is not tractable using the first proposed technique. 
The second example illustrates the use of multiple Lyapunov functions (one per mode) using the first proposed approach. A preliminary investigation on the construction of bisimilar symbolic models for stochastic switched systems using the first proposed approach (requiring state space discretization) appeared in \cite{majid9}. In this paper we present a detailed and mature description of the results presented in \cite{majid9}, including proofs, as well as proposing a second approach which does not require any space discretization.       
\vspace{-0.2cm}

\section{Stochastic Switched Systems}
\subsection{Notation} 
The symbols $\N$, $\N_0$, $\Ze$, $\R$, $\R^+$, and $\R_0^+$ denote the set of natural, nonnegative integer, integer, real, positive, and nonnegative real numbers, respectively. The symbols $I_n$, $0_n$, and $0_{n\times{m}}$ denote the identity matrix, zero vector, and zero matrix in $\R^{n\times{n}}$, $\R^n$, and $\R^{n\times{m}}$, respectively. Given a set $A$, define $A^{n+1}=A\times A^n$ for any $n\in\N$. Given a vector \mbox{$x\in\mathbb{R}^{n}$}, we denote by $x_{i}$ the $i$--th element of $x$, and by $\Vert x\Vert$ the infinity norm of $x$, namely, \mbox{$\Vert x\Vert=\max\{|x_1|,|x_2|,...,|x_n|\}$}, where $|x_i|$ denotes the absolute value of $x_i$. 
Given a matrix $P=\{p_{ij}\}\in\R^{n\times{n}}$, we denote by $\text{Tr}(P)=\sum_{i=1}^np_{ii}$ the trace of $P$.
The \emph{diagonal set} $\Delta\subset\R^n\times\R^n$ is defined as: $\Delta=\left\{(x,x) \mid x\in \R^n\right\}$.

The closed ball centered at $x\in{\mathbb{R}}^{n}$ with radius $\varepsilon$ is defined by \mbox{$\mathcal{B}_{\varepsilon}(x)=\{y\in{\mathbb{R}}^{n}\,|\,\Vert x-y\Vert\leq\varepsilon\}$}. A set $B\subseteq \R^n$ is called a 
{\em box} if $B = \prod_{i=1}^n [c_i, d_i]$, where $c_i,d_i\in \R$ with $c_i < d_i$ for each $i\in\set{1,\ldots,n}$.
The {\em span} of a box $B$ is defined as $\boxspan(B) = \min\set{ | d_i - c_i| \mid i=1,\ldots,n}$. By defining $[\R^n]_{\eta}=\left\{a\in \R^n\mid a_{i}=k_{i}\eta,k_{i}\in\mathbb{Z},i=1,\ldots,n\right\}$, the 
set \mbox{$\bigcup_{p\in[\R^n]_{\eta}}\mathcal{B}_{\lambda}(p)$} is a 
countable covering of $\R^n$ for any $\eta\in\R^+$ and $\lambda\geq\eta/2$. 
For a box $B\subseteq\R^n$ and $\eta \leq \boxspan(B)$,
define the $\eta$-approximation $[B]_\eta = [\R^n]_{\eta}\cap{B}$. 
Note that $[B]_{\eta}\neq\varnothing$ for any $\eta\leq\boxspan(B)$. 
Geometrically, for any $\eta\in{\mathbb{R}^+}$ with $\eta\leq\boxspan(B)$ and $\lambda\geq\eta$, the collection of sets 
\mbox{$\{\mathcal{B}_{\lambda}(p)\}_{p\in [B]_{\eta}}$}
is a finite covering of $B$, i.e., \mbox{$B\subseteq\bigcup_{p\in[B]_{\eta}}\mathcal{B}_{\lambda}(p)$}. 
We extend the notions of $\boxspan$ and of {\em approximation} to finite unions of boxes as follows.
Let $A = \bigcup_{j=1}^M A_j$, where each $A_j$ is a box.
Define $\boxspan(A) = \min\set{\boxspan(A_j)\mid j=1,\ldots,M}$,
and for any $\eta \leq \boxspan(A)$, define $[A]_\eta = \bigcup_{j=1}^M [A_j]_\eta$.

A continuous function \mbox{$\gamma:\mathbb{R}_{0}^{+}\rightarrow\mathbb{R}_{0}^{+}$}, is said to belong to class $\mathcal{K}$ if it is strictly increasing and \mbox{$\gamma(0)=0$}; $\gamma$ is said to belong to class $\mathcal{K}_{\infty}$ if \mbox{$\gamma\in\mathcal{K}$} and $\gamma(r)\rightarrow\infty$ as $r\rightarrow\infty$. A continuous function \mbox{$\beta:\mathbb{R}_{0}^{+}\times\mathbb{R}_{0}^{+}\rightarrow\mathbb{R}_{0}^{+}$} is said to belong to class $\mathcal{KL}$ if, for each fixed $s$, the map $\beta(r,s)$ belongs to class $\mathcal{K}$ with respect to $r$ and, for each fixed nonzero $r$, the map $\beta(r,s)$ is decreasing with respect to $s$ and $\beta(r,s)\rightarrow 0$ as \mbox{$s\rightarrow\infty$}. We identify a relation \mbox{$R\subseteq A\times B$} with the map \mbox{$R:A \rightarrow 2^{B}$} defined by $b\in R(a)$ iff \mbox{$(a,b)\in R$}. Given a relation \mbox{$R\subseteq A\times B$}, $R^{-1}$ denotes the inverse relation defined by \mbox{$R^{-1}=\{(b,a)\in B\times A:(a,b)\in R\}$}.

\subsection{Stochastic switched systems}\label{sss}
Let $(\Omega, \sigalg, \PP)$ be a probability space endowed with a filtration $\filtration = (\sigalg_t)_{t\geq 0}$ satisfying the usual conditions of completeness and right-continuity \cite[p.\ 48]{ref:KarShr-91}. Let $(W_t)_{t \ge 0}$ be a $\widehat{q}$-dimensional $\filtration$-adapted Brownian motion \cite{oksendal}. The class of stochastic switched systems considered in this paper is formalized as follows.  

\begin{definition}
\label{Def_control_sys}A stochastic switched system $\Sigma$ is a tuple $\Sigma=(\mathbb{R}^{n},\mathsf{P},\mathcal{P},F,G)$, where
\begin{itemize}
\item $\mathbb{R}^{n}$ is the state space;

\item $\mathsf{P}=\left\{1,\ldots,m\right\}$ is a finite set of modes;

\item $\mathcal{P}$ is a subset of the set of piecewise constant c\`adl\`ag (i.e. right-continuous and with left limits) functions from $\R_0^+$ to $\mathsf{P}$, and with a finite number of discontinuities on every bounded interval in $\R_0^+$ (no Zeno behaviour);  

\item $F=\left\{f_1,\ldots,f_m\right\}$ is such that for any $p\in\mathsf{P}$, $f_p:\R^n\rightarrow\R^n$ is globally Lipschitz continuous; 

\item $G=\left\{g_1,\ldots,g_m\right\}$ is such that for any $p\in\mathsf{P}$, $g_p:\R^n\rightarrow\R^{n\times{\widehat{q}}}$ is globally Lipschitz continuous with Lipschitz constant $Z_p\in\R_0^+$. 
\end{itemize}
\end{definition}

A continuous-time stochastic process \mbox{$\xi:\Omega \times \R_0^+ \rightarrow \mathbb{R}^{n}$} is said to be a \textit{solution process} of $\Sigma$ if there exists a switching signal $\upsilon\in\mathcal{P}$ satisfying 
\begin{small}
\begin{equation}
\label{eq00}
	\diff \xi= f_{\upsilon}(\xi)\diff t+g_{\upsilon}(\xi)\diff W_t,
\end{equation}
\end{small}$\PP$-almost surely ($\PP$-a.s.), at each time $t\in\R_0^+$ whenever $\upsilon$ is continuous. 
Let us emphasize that $\upsilon$ is a piecewise constant function defined over $\R_0^+$ and taking values in $\mathsf{P}$, 
which simply dictates in which mode the solution process $\xi$ is located, at any time $t \in \R_0^+$. 

For any given $p\in\mathsf{P}$, we denote by $\Sigma_p$ the subsystem of $\Sigma$ defined by the stochastic differential equation (SDE)
\begin{small}
\begin{equation}
\label{eq10}
	\diff \xi= f_{p}(\xi)\diff t+g_{p}(\xi)\diff W_t, 
\end{equation}
\end{small}where $f_{p}$ is known as the drift and
$g_{p}$ as the diffusion.
Given an initial condition which is a random variable, measurable in $\sigalg_0$, a solution process of $\Sigma_p$ exists and is uniquely determined owing to the assumptions on $f_p$ and on $g_p$ 
\cite[Theorem 5.2.1, p.\ 68]{oksendal}. 
 
We further write $\xi_{a \upsilon}(t)$ to denote the value of the solution process of $\Sigma$ at time $t\in\R_0^+$ under the switching signal $\upsilon$ from initial condition $\xi_{a \upsilon}(0) = a$ $\PP$-a.s., in which $a$ is a random variable that is measurable in $\sigalg_0$. 

Finally, note that a solution process of $\Sigma_p$ is also a solution process of $\Sigma$ corresponding to the constant switching signal $\upsilon(t)=p$, for all $t\in\R_0^+$. We also use $\xi_{ap}(t)$ to denote the value of the solution process of $\Sigma_p$ at time $t\in\R_0^+$ from the initial condition $\xi_{a p}(0) = a$ $\PP$-a.s..
\vspace{-0.2cm}

\section{Notions of Incremental Stability}\label{stability}
This section introduces some stability notions for stochastic switched systems, 
which generalize the notions of incremental global asymptotic stability ($\delta$-GAS) \cite{angeli} for non-probabilistic dynamical systems and of incremental global uniform asymptotic stability ($\delta$-GUAS) \cite{girard2} for non-probabilistic switched systems. 
The main results presented in this work rely on the stability assumptions discussed in this section. 

\begin{definition}
\label{dGAS}
The stochastic subsystem $\Sigma_p$ is incrementally globally asymptotically stable in the $q$th moment ($\delta$-GAS-M$_q$), where $q\geq1$, if there exists a $\mathcal{KL}$ function $\beta_p$ such that for any $t\in{\mathbb{R}_0^+}$ and any $\R^n$-valued random variables $a$ and $a'$ that are measurable in $\sigalg_0$, the following condition is satisfied:
\begin{small}
\begin{equation}
\EE \left[\left\Vert \xi_{ap}(t)-\xi_{a'p}(t)\right\Vert^q\right] \leq\beta_p\left( \EE\left[ \left\Vert a-a' \right\Vert^q\right], t \right). \label{delta_SGAS}
\end{equation}
\end{small}
\end{definition}

It can be easily checked that a $\delta$-GAS-M$_q$ stochastic subsystem $\Sigma_p$ is $\delta$-GAS \cite{angeli} in the absence of any noise. 
Further, note that when $f_p(0_n)=0_n$ and $g_p(0_n)=0_{n\times{\widehat{q}}}$ (drift and diffusion terms vanish at the origin), 
then $\delta$-GAS-M$_q$ implies global asymptotic stability in the $q$th moment (GAS-M$_q$) \cite{debasish}, 
which means that all the trajectories of $\Sigma_p$ converge in the $q$th moment to the (constant) trajectory $\xi_{0_np}(t)=0_n$ (namely, the equilibrium point), for all $t\in\R_0^+$. 
We extend the notion of $\delta$-GAS-M$_q$ to stochastic switched systems as follows.  

\begin{definition}
\label{dGUAS}
A stochastic switched system $\Sigma$ is incrementally globally uniformly asymptotically stable in the $q$th moment ($\delta$-GUAS-M$_q$), where $q\geq1$, if there exists a $\mathcal{KL}$ function $\beta$ such that for any $t\in{\mathbb{R}_0^+}$, any $\R^n$-valued random variables $a$ and $a'$ that are measurable in $\sigalg_0$, and any switching signal ${\upsilon}\in\mathcal{P}$, the following condition is satisfied:
\begin{small}
\begin{equation}
\EE \left[\left\Vert \xi_{a\upsilon}(t)-\xi_{a'\upsilon}(t)\right\Vert^q\right] \leq\beta\left( \EE\left[ \left\Vert a-a' \right\Vert^q\right], t \right). \label{delta_SGUAS}
\end{equation}
\end{small}
\end{definition}

Essentially, Definition \ref{dGUAS} extends Definition \ref{dGAS} uniformly over any possible switching signal $\upsilon\in\mathcal{P}$. 
As expected, this notion generalizes known ones in the literature:  
it can be easily seen that a $\delta$-GUAS-M$_q$ stochastic switched system $\Sigma$ is $\delta$-GUAS \cite{girard2} in the absence of any noise 
and that, whenever $f_p(0_n)=0_n$ and $g_p(0_n)=0_{n\times{\widehat{q}}}$ for all $p\in\mathsf{P}$, then $\delta$-GUAS-M$_q$ implies global uniform asymptotic stability in the $q$th moment (GUAS-M$_q$) \cite{debasish}. 

For non-probabilistic systems the $\delta$-GAS property can be characterized by $\delta$-GAS Lyapunov functions \cite{angeli}. 
Along these lines, we describe $\delta$-GAS-M$_q$ 
in terms of the existence of some {\em incremental Lyapunov functions}, defined as the following. 

\begin{definition}
\label{delta_SGAS_Lya}
Consider a stochastic subsystem $\Sigma_p$ and a continuous function $V_p:\mathbb{R}^n\times\mathbb{R}^n\rightarrow\mathbb{R}_0^+$ that is twice continuously differentiable on 
$\{\R^n\times\R^n\}\backslash\Delta$. 
Function $V_p$ is called a $\delta$-GAS-M$_q$ Lyapunov function for $\Sigma_p$, 
where $q\geq1$, 
if there exist $\mathcal{K}_{\infty}$ functions 
$\underline{\alpha}_p$, $\overline{\alpha}_p$ and a constant $\kappa_p\in\mathbb{R}^+$, such that
\begin{itemize}
\item[(i)] $\ul{\alpha}_p$ (resp. $\ol \alpha_p$) is a convex (resp. concave) function;

\item[(ii)] for any $x,x'\in\mathbb{R}^n$, $\underline{\alpha}_p\left(\left\Vert x-x'\right\Vert^q\right)\leq{V_p}(x,x')\leq\overline{\alpha}_p\left(\left\Vert x-x'\right\Vert^q\right)$;

\item[(iii)] for any $x,x'\in\mathbb{R}^n$, such that $x\neq x'$,
\begin{small}
\begin{align*}
	&\mathcal{L} V_p(x, x') 
	 \Let \left[\partial_xV_p~~\partial_{x'}V_p\right] \begin{bmatrix} f_p(x)\\f_p(x')\end{bmatrix}+\frac{1}{2} \text{Tr} \left(\begin{bmatrix} g_p(x) \\ g_p(x') \end{bmatrix}\left[g_p^T(x)~~g_p^T(x')\right] \begin{bmatrix}
\partial_{x,x} V_p & \partial_{x,x'} V_p\\ \partial_{x',x} V_p & \partial_{x',x'} V_p
\end{bmatrix}	\right)\leq-\kappa_pV_p(x,x'). 
\end{align*} 
\end{small}
\end{itemize}
\end{definition}
The operator $\mathcal{L}$ is the infinitesimal generator associated to the stochastic subsystem $\Sigma_p$, defined by the SDE in \eqref{eq10} \cite[Section 7.3]{oksendal}. 
The symbols $\partial_x$ and $\partial_{x,x'}$ denote first- and second-order partial derivatives with respect to $x$ and $x'$, respectively. 


The following theorem describes $\delta$-GAS-M$_q$ in terms of the existence of a $\delta$-GAS-M$_q$ Lyapunov function.

\begin{theorem}
\label{the_Lya}
A stochastic subsystem $\Sigma_p$ is $\delta$-GAS-M$_q$ if it admits a $\delta$-GAS-M$_q$ Lyapunov function. 
\end{theorem}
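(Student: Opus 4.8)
The plan is to track the expected value of the Lyapunov function $V_p$ along the two solution processes $\xi_{ap}$ and $\xi_{a'p}$ driven by the \emph{same} Brownian motion, show that this expected value decays exponentially at rate $\kappa_p$ by invoking condition (iii), and then convert this into a bound on $\EE[\|\xi_{ap}(t)-\xi_{a'p}(t)\|^q]$ using the sandwich bounds in (ii) together with the convexity/concavity hypotheses in (i).

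First I would apply Dynkin's formula (It\^o's formula in expectation) to the process $t \mapsto V_p(\xi_{ap}(t),\xi_{a'p}(t))$. Since $\mathcal{L}$ is precisely the infinitesimal generator of the joint diffusion $(\xi_{ap},\xi_{a'p})$ solving \eqref{eq10} under a shared noise, condition (iii) yields the differential inequality $\frac{\diff}{\diff t}\EE[V_p(\xi_{ap}(t),\xi_{a'p}(t))] \le -\kappa_p\,\EE[V_p(\xi_{ap}(t),\xi_{a'p}(t))]$. The comparison lemma (or directly Gr\"onwall's inequality) then gives $\EE[V_p(\xi_{ap}(t),\xi_{a'p}(t))] \le e^{-\kappa_p t}\,\EE[V_p(a,a')]$.

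The main obstacle is that $V_p$ is only twice continuously differentiable off the diagonal $\Delta$, so It\^o's formula does not apply verbatim at times when the two trajectories coincide. I would resolve this with a stopping-time argument: let $\tau \Let \inf\{t\ge 0 : \xi_{ap}(t)=\xi_{a'p}(t)\}$ be the first hitting time of $\Delta$. On $[0,\tau)$ the joint process stays in $\{\R^n\times\R^n\}\backslash\Delta$, where $V_p$ is $C^2$, so It\^o applies (localizing by a further sequence of stopping times to guarantee that the stochastic-integral term has zero expectation). Pathwise uniqueness of \eqref{eq10} forces the two solutions to coincide for all $t\ge\tau$, whence $V_p(\xi_{ap}(t),\xi_{a'p}(t))=0$ there, using that (ii) with $\ul\alpha_p(0)=\ol\alpha_p(0)=0$ gives $V_p(x,x)=0$. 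Combining the two regimes preserves the exponential bound above.

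Finally I would translate the bound on $\EE[V_p]$ into one on the $q$th moment of the distance. By (ii) and Jensen's inequality applied to the convex function $\ul\alpha_p$, we obtain $\ul\alpha_p(\EE[\|\xi_{ap}(t)-\xi_{a'p}(t)\|^q]) \le \EE[\ul\alpha_p(\|\xi_{ap}(t)-\xi_{a'p}(t)\|^q)] \le \EE[V_p(\xi_{ap}(t),\xi_{a'p}(t))]$; symmetrically, concavity of $\ol\alpha_p$ gives $\EE[V_p(a,a')] \le \EE[\ol\alpha_p(\|a-a'\|^q)] \le \ol\alpha_p(\EE[\|a-a'\|^q])$. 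Chaining these with the exponential decay and inverting the $\mathcal{K}_\infty$ function $\ul\alpha_p$ yields $\EE[\|\xi_{ap}(t)-\xi_{a'p}(t)\|^q] \le \ul\alpha_p^{-1}(e^{-\kappa_p t}\,\ol\alpha_p(\EE[\|a-a'\|^q]))$. Setting $\beta_p(r,s)\Let\ul\alpha_p^{-1}(e^{-\kappa_p s}\,\ol\alpha_p(r))$ completes the proof: for each fixed $s$ this $\beta_p$ is class $\mathcal{K}$ in $r$ (a composition of $\mathcal{K}_\infty$ maps scaled by a positive constant), and for each fixed nonzero $r$ it is decreasing in $s$ with $\beta_p(r,s)\to 0$ as $s\to\infty$, hence it is a $\mathcal{KL}$ function as required by Definition \ref{dGAS}.
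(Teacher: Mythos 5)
Your proposal follows essentially the same route as the paper's proof: It\^o/Dynkin plus Gr\"onwall to obtain $\EE\left[V_p(\xi_{ap}(t),\xi_{a'p}(t))\right]\le \EE\left[V_p(a,a')\right]\mathsf{e}^{-\kappa_p t}$, then Jensen's inequality via the convexity/concavity hypotheses of (i) together with the sandwich bounds of (ii), and the identical $\mathcal{KL}$ function $\beta_p(r,s)=\ul\alpha_p^{-1}\left(\ol\alpha_p(r)\mathsf{e}^{-\kappa_p s}\right)$. The only genuine addition is your stopping-time treatment of the diagonal, where $V_p$ fails to be $C^2$: the paper applies It\^o's formula without comment, whereas you localize before the first hitting time of $\Delta$ and invoke pathwise uniqueness (valid here since $f_p$, $g_p$ are globally Lipschitz) to conclude that the two solutions coincide afterwards, so $V_p$ vanishes there by (ii); this closes a technical gap the paper glosses over rather than changing the argument.
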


\begin{proof}
	The proof is a consequence of the application of Gronwall's inequality and of Ito's lemma \cite[p. 80 and 123]{oksendal}. Assume that there exists a $\delta$-GAS-M$_q$ Lyapunov function in the sense of Definition \ref{delta_SGAS_Lya}. For any $t\in\R_0^+$, and any $\R^n$-valued random variables $a$ and $a'$ that are measurable in $\sigalg_0$, we obtain
		\begin{small}
		\begin{align*}
			\EE \left[ V_p(\xi_{ap}(t),\xi_{a'p}(t)) \right]&=\EE \left[V_p(a,a') + \int_0^{t} \mathcal{L}V_p(\xi_{ap}(s),\xi_{a'p}(s))ds\right]\le\EE \left[ V_p(a,a') + \int_0^{t} \left(-\kappa_p V_p(\xi_{ap}(s),\xi_{a'p}(s))\right)ds\right] \\
			&\le-\kappa_p \int_0^{t} \EE \left[ V_p(\xi_{ap}(s),\xi_{a'p}(s))\right ]ds+\EE\left[V_p(a,a')\right]\nonumber,
		\end{align*}
		\end{small}which, by virtue of Gronwall's inequality, leads to 
		\begin{small}
\begin{align}
	\nonumber
		\EE &\left[ V_p(\xi_{ap}(t),\xi_{a'p}(t)) \right]\le \EE[V_p(a,a')] \mathsf{e}^{-\kappa_p t}.
\end{align}
\end{small}Hence, using property (ii) in Definition \ref{delta_SGAS_Lya}, we have
	\begin{small}
	\begin{align}\nonumber
		\ul{\alpha}_p\left(\EE \left[ \left\|\xi_{ap}(t)-\xi_{a'p}(t)\right\|^q\right]\right)&\le \EE \left[ \ul{\alpha}_p\left(\left\| \xi_{ap}(t)-\xi_{a'p}(t)\right\|^q\right) \right]\le\EE \left[ V_p\left(\xi_{ap}(t),\xi_{a'p}(t)\right)\right]\leq \EE\left[V_p(a,a')\right] \mathsf{e}^{-\kappa_p t}\\\notag&\leq \EE\left[\ol{\alpha}_p\left(\left\| a - a' \right\|^q\right)\right] \mathsf{e}^{-\kappa_p t}\le \ol{\alpha}_p\left( \EE\left[\left\| a - a' \right\|^q\right] \right) \mathsf{e}^{-\kappa_p t},
	\end{align}\end{small}where the first and last inequalities follow from property (i) and Jensen's inequality \cite[p. 310]{oksendal}. Since $\ul{\alpha}_p\in\mathcal{K}_\infty$, we obtain
	\begin{small}
\begin{align}\nonumber
\EE \left[ \left\|\xi_{ap}(t)-\xi_{a'p}(t)\right\|^q\right]\le\ul{\alpha}_p^{-1} \left(\ol{\alpha}_p\left( \EE\left[ \|a - a' \|^q\right] \right) \mathsf{e}^{-\kappa_p t}\right).
\end{align}
\end{small}Therefore, by introducing function
		$\beta_p\left(r,s\right) \Let \ul{\alpha}_p^{-1}\left(\ol{\alpha}_p\left(r\right)\mathsf{e}^{-\kappa_ps} \right)$,	
	condition (\ref{delta_SGAS}) is satisfied. Hence, the stochastic subsystem $\Sigma_p$ is $\delta$-GAS-M$_q$.	
\end{proof}

Let us now direct our attention from subsystems to the overall switched model. 
As qualitatively stated in the introduction, 
it is known that a non-probabilistic switched system, whose subsystems are all $\delta$-GAS, may exhibit some unstable behaviors under fast switching signals \cite{girard2} and, hence, may not be $\delta$-GUAS. 
The same phenomenon can happen for a stochastic switched system endowed by $\delta$-GAS-M$_q$ subsystems. 
The $\delta$-GUAS property of non-probabilistic switched systems can be established by using a common Lyapunov function, 
or alternatively via multiple Lyapunov functions that are mode-dependent \cite{girard2}.  
This leads to the following extensions for $\delta$-GUAS-M$_q$ property of stochastic switched systems. 

Assume that for any $p\in\mathsf{P}$, the stochastic subsystem $\Sigma_p$ admits a $\delta$-GAS-M$_q$ Lyapunov function $V_p$, satisfying conditions (i)-(iii) in Definition \ref{delta_SGAS_Lya} with $\mathcal{K}_\infty$ functions $\ul\alpha_p$, $\ol\alpha_p$, and a constant $\kappa_p\in\R^+$. 
Let us introduce the $\mathcal{K}_\infty$ functions $\ul\alpha$ and $\ol\alpha$ and the positive constant $\kappa$ for use in the rest of the paper as the following: $\ul\alpha=\min\left\{\ul\alpha_1,\ldots,\ul\alpha_m\right\}$, 
$\ol\alpha=\max\left\{\ol\alpha_1,\ldots,\ol\alpha_m\right\}$, 
and $\kappa=\min\left\{\kappa_1,\ldots,\kappa_m\right\}$. 
We first show a result based on the existence of a common Lyapunov function in which $\ul\alpha=\ul\alpha_1=\cdots=\ul\alpha_m$ and $\ol\alpha=\ol\alpha_1=\cdots=\ol\alpha_m$.  

\begin{theorem}\label{theorem2}
Consider a stochastic switched system $\Sigma$. 
If there exists a function $V$ that is a common $\delta$-GAS-M$_q$ Lyapunov function for all the subsystems $\left\{\Sigma_1,\ldots,\Sigma_m\right\}$, 
then $\Sigma$ is $\delta$-GUAS-M$_q$.
\end{theorem}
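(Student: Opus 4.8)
The plan is to reduce the switched case to the single-mode computation already carried out in the proof of Theorem~\ref{the_Lya}, exploiting the fact that a \emph{common} Lyapunov function does not jump at switching instants. The key quantity to control is $\EE\left[V(\xi_{a\upsilon}(t),\xi_{a'\upsilon}(t))\right]$, and the target is the uniform bound
\begin{small}
\begin{equation*}
\EE\left[V(\xi_{a\upsilon}(t),\xi_{a'\upsilon}(t))\right]\le \EE\left[V(a,a')\right]\,\mathsf{e}^{-\kappa t},
\end{equation*}
\end{small}
valid for \emph{every} switching signal $\upsilon\in\mathcal{P}$. The decisive observation is that, since $V$ is common to all modes, condition (iii) of Definition~\ref{delta_SGAS_Lya} gives $\mathcal{L}V(x,x')\le-\kappa_pV(x,x')\le-\kappa V(x,x')$ for the generator of \emph{each} mode $p$, using $V\ge0$ and $\kappa=\min_p\kappa_p\le\kappa_p$. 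Thus the pointwise infinitesimal decay rate $\kappa$ holds no matter which mode is currently active.

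First I would fix $t\in\R_0^+$, a switching signal $\upsilon\in\mathcal{P}$, and $\sigalg_0$-measurable random variables $a,a'$. By the no-Zeno assumption in Definition~\ref{Def_control_sys}, $\upsilon$ has finitely many discontinuities $0=t_0<t_1<\cdots<t_N\le t$ on $[0,t]$; on each subinterval the signal is constant, say equal to mode $p_i$ on $[t_i,t_{i+1})$, so there the solution process of $\Sigma$ coincides with a solution process of the subsystem $\Sigma_{p_i}$. On each such subinterval I would run exactly the Ito-plus-Gronwall argument of Theorem~\ref{the_Lya}, but with the uniform rate $\kappa$ in place of $\kappa_{p_i}$, to obtain
\begin{small}
\begin{equation*}
\EE\left[V(\xi_{a\upsilon}(t_{i+1}),\xi_{a'\upsilon}(t_{i+1}))\right]\le \EE\left[V(\xi_{a\upsilon}(t_i),\xi_{a'\upsilon}(t_i))\right]\,\mathsf{e}^{-\kappa(t_{i+1}-t_i)}.
\end{equation*}
\end{small}
Because $V$ is the same function on both sides of every switching time, the Lyapunov value carried across each $t_{i+1}$ does not change, so these per-interval estimates telescope over the finitely many pieces (together with the last piece up to $t$) to yield the target bound above. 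Restarting the estimate at each $t_i$ is legitimate because the single-subsystem estimate of Theorem~\ref{the_Lya} holds for arbitrary initial conditions measurable in the corresponding $\sigalg_{t_i}$; this is where I would invoke the time-homogeneous Markov structure of the subsystem SDEs together with the tower property of conditional expectation.

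Having secured the uniform bound on $\EE[V]$, the conclusion follows as in Theorem~\ref{the_Lya}: using property (ii) and then property (i) with Jensen's inequality gives
\begin{small}
\begin{equation*}
\ul\alpha\left(\EE\left[\|\xi_{a\upsilon}(t)-\xi_{a'\upsilon}(t)\|^q\right]\right)\le \EE\left[V(\xi_{a\upsilon}(t),\xi_{a'\upsilon}(t))\right]\le \ol\alpha\left(\EE\left[\|a-a'\|^q\right]\right)\mathsf{e}^{-\kappa t},
\end{equation*}
\end{small}
after which I would apply $\ul\alpha^{-1}$ and set $\beta(r,s)\Let\ul\alpha^{-1}\!\left(\ol\alpha(r)\,\mathsf{e}^{-\kappa s}\right)$, which is a $\mathcal{KL}$ function \emph{independent of} $\upsilon$. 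This delivers inequality~\eqref{delta_SGUAS} uniformly over all switching signals, i.e. $\Sigma$ is $\delta$-GUAS-M$_q$.

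I expect the only genuinely delicate step to be the concatenation across switching times: one must justify restarting the Dynkin/Ito estimate at each (random) state $\xi_{a\upsilon}(t_i)$ with the correct $\sigalg_{t_i}$-measurability, and verify that the uniform rate $\kappa$ indeed dominates every mode so that no mode-dependent constant survives the telescoping. The no-Zeno hypothesis is essential here, as it guarantees only finitely many factors in the product on $[0,t]$. It is precisely the use of a single common $V$ — hence the absence of any jump in the Lyapunov value at switches — that makes this telescoping clean; the multiple-Lyapunov-function case would instead require a dwell-time condition to offset such jumps.
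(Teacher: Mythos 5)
Your proof is correct and follows essentially the same route as the paper's: the same uniform generator bound $\mathcal{L}V\le-\kappa V$ with $\kappa=\min_p\kappa_p$ (valid in every mode because $V$ is common), the same intermediate estimate $\EE\left[V(\xi_{a\upsilon}(t),\xi_{a'\upsilon}(t))\right]\le\EE\left[V(a,a')\right]\mathsf{e}^{-\kappa t}$, and the same conclusion via properties (i)--(ii), Jensen's inequality, and $\beta(r,s)\Let\ul\alpha^{-1}\left(\ol\alpha(r)\mathsf{e}^{-\kappa s}\right)$. The only difference is bookkeeping: the paper derives the intermediate bound with a single Dynkin--Gronwall argument over $[0,t]$, invoking continuity of $V$ and of the solution process across the finitely many switching times, whereas you telescope per-interval estimates restarted at each switching instant -- a valid, slightly more explicit treatment of the same concatenation that the paper handles implicitly.
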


\begin{proof}
The proof is a consequence of the application of Gronwall's inequality and of Ito's lemma \cite[p. 80 and 123]{oksendal}. For any $\R^n$-valued random variables $a$ and $a'$ that are measurable in $\sigalg_0$, any switching signal $\upsilon\in\mathcal{P}$, and for all $t\in\R_0^+$ where $\upsilon$ is continuous, we have $\mathcal{L}V\left(\xi_{a\upsilon}(t),\xi_{a'\upsilon}(t)\right)\leq-\kappa V(\xi_{a\upsilon}(t),\xi_{a'\upsilon}(t))$. 
Using the continuity of $V$ and of the solution process $\xi$, for all $t\in\R_0^+$ one gets
\begin{small}
		\begin{align*}
			\EE \left[ V(\xi_{a\upsilon}(t),\xi_{a'\upsilon}(t)) \right]&\le\EE \left[ V(a,a') + \int_0^{t} \left(-\kappa V(\xi_{a\upsilon}(s),\xi_{a'\upsilon}(s))\right)ds\right] \le-\kappa\int_0^{t} \EE \left[ V(\xi_{a\upsilon}(s),\xi_{a'\upsilon}(s))\right ]ds+\EE\left[V(a,a')\right]\nonumber,
		\end{align*}
		\end{small}which, by virtue of Gronwall's inequality, leads to 
		\begin{small}
\begin{align}
	\nonumber
		\EE &\left[ V(\xi_{a\upsilon}(t),\xi_{a'\upsilon}(t)) \right]\le \EE[V(a,a')] \mathsf{e}^{-\kappa t}.
\end{align}
\end{small}
Since the $\mathcal{K}_\infty$ functions $\ul\alpha$ and $\ol\alpha$ are convex and concave, respectively, using Jensen's inequality we have
\begin{small}
	\begin{align}\nonumber
		\ul\alpha\left(\EE \left[ \left\Vert\xi_{a\upsilon}(t)-\xi_{a'\upsilon}(t)\right\Vert^q\right]\right)&\le \EE \left[ \ul\alpha\left(\left\Vert \xi_{a\upsilon}(t)-\xi_{a'\upsilon}(t)\right\Vert^q\right) \right]\le\EE \left[ V\left(\xi_{a\upsilon}(t),\xi_{a'\upsilon}(t)\right)\right]\leq \EE\left[V(a,a')\right] \mathsf{e}^{-\kappa t}\\\notag&\leq \EE\left[\ol\alpha\left(\left\Vert a - a' \right\Vert^q\right)\right] \mathsf{e}^{-\kappa t}\le \ol\alpha\left( \EE\left[\left\Vert a - a' \right\Vert^q\right] \right) \mathsf{e}^{-\kappa t}.
	\end{align}
	\end{small}
Since $\ul\alpha\in\mathcal{K}_\infty$, we obtain \begin{small}$$\EE \left[ \left\|\xi_{a\upsilon}(t)-\xi_{a'\upsilon}(t)\right\|^q\right]\le\ul{\alpha}^{-1} \left(\ol{\alpha}\left( \EE\left[ \|a - a' \|^q\right] \right) \mathsf{e}^{-\kappa t}\right),$$\end{small}for all $t\in\R_0^+$. Then condition (\ref{delta_SGUAS}) holds with the function $\beta(r,s)\Let\ul\alpha^{-1}\left(\ol\alpha({r})\mathsf{e}^{-\kappa s}\right)$. 
\end{proof}

When a common $\delta$-GAS-M$_q$ Lyapunov function $V$ fails to exist, 
the $\delta$-GUAS-M$_q$ property of $\Sigma$ can still be established by resorting to multiple $\delta$-GAS-M$_q$ Lyapunov functions (one per mode) over a restricted set of switching signals. 
More precisely, let $\mathcal{P}_{\tau_d}$ be a subset of the set of switching signals $\upsilon$ with \emph{dwell time} 
$\tau_d\in\R_0^+$, 
where $\upsilon$ is said to have dwell time $\tau_d$ if the switching times $t_1,t_2,\ldots$ (occurring at the discontinuity points of $\upsilon$) satisfy $t_1>\tau_d$ and $t_i-t_{i-1}\geq\tau_d$, for all $i\geq{2}$. 
We now show a stability result based on the existence of multiple Lyapunov functions.

\begin{theorem}\label{multiple_lyapunov}
Let $\tau_d\in\R_0^+$, and consider a stochastic switched system $\Sigma_{\tau_d}=(\mathbb{R}^{n},\mathsf{P},\mathcal{P}_{\tau_d},F,G)$. Assume that for any $p\in\mathsf{P}$, there exists a $\delta$-GAS-M$_q$ Lyapunov function $V_p$ for subsystem $\Sigma_{\tau_d,p}$ and that in addition there exits a constant $\mu\geq1$ such that
\begin{small}
\begin{equation}\label{eq0}
\forall{x,x'}\in\R^n,~~\forall{p,p'\in\mathsf{P}},~~V_p(x,x')\leq\mu V_{p'}(x,x').
\end{equation}
\end{small}
If $\tau_d>\log{\mu}/\kappa$, then $\Sigma_{\tau_d}$ is $\delta$-GUAS-M$_q$.
\end{theorem}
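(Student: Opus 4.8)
The plan is to lift the classical dwell-time argument for multiple Lyapunov functions to the stochastic setting by tracking the expected value of the \emph{active} Lyapunov function along a switched trajectory. Both ingredients are already available: on any interval where $\upsilon$ is constant, the Ito-plus-Gronwall computation in the proof of Theorem~\ref{the_Lya} yields exponential decay of the expected Lyapunov value at rate at least $\kappa=\min\{\kappa_1,\dots,\kappa_m\}$; at a switching instant, the comparison condition~\eqref{eq0} inflates that value by a factor of at most $\mu$. The dwell-time bound $\tau_d>\log\mu/\kappa$ is exactly what makes the per-interval decay outweigh the accumulated jumps.

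Concretely, I would fix $\sigalg_0$-measurable random variables $a,a'$ and a signal $\upsilon\in\mathcal{P}_{\tau_d}$ with switching times $t_1<t_2<\cdots$, set $t_0=0$, let $p_i$ be the mode on $[t_i,t_{i+1})$, and write $W(t)\Let\EE[V_{\upsilon(t)}(\xi_{a\upsilon}(t),\xi_{a'\upsilon}(t))]$. On each $[t_i,t_{i+1})$ the dynamics coincide with those of $\Sigma_{p_i}$ and the switching times are deterministic, so no stopping-time subtleties arise and the estimate of Theorem~\ref{the_Lya} gives $\EE[V_{p_i}(\xi_{a\upsilon}(t),\xi_{a'\upsilon}(t))]\le\EE[V_{p_i}(\xi_{a\upsilon}(t_i),\xi_{a'\upsilon}(t_i))]\,\mathsf{e}^{-\kappa(t-t_i)}$. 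Since $\xi_{a\upsilon}$ has continuous sample paths, the only jump of $W$ at $t_{i+1}$ stems from relabelling the Lyapunov function, and~\eqref{eq0} gives $W(t_{i+1})\le\mu\,W(t_{i+1}^-)$. Iterating these two facts produces $W(t)\le\mu^{N(t)}W(0)\,\mathsf{e}^{-\kappa t}$, where $N(t)$ is the number of switches in $[0,t]$. The dwell-time constraints $t_1>\tau_d$ and $t_i-t_{i-1}\ge\tau_d$ force $t_i>i\tau_d$, whence $N(t)<t/\tau_d$; as $\mu\ge1$ this yields $W(t)\le W(0)\,\mathsf{e}^{-\lambda t}$ with $\lambda\Let\kappa-\log\mu/\tau_d$, which is strictly positive precisely when $\tau_d>\log\mu/\kappa$.

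It remains to convert the decay of $W$ into a bound on $\EE[\|\xi_{a\upsilon}(t)-\xi_{a'\upsilon}(t)\|^q]$, and this is the step that I expect to require the most care. Unlike in Theorem~\ref{theorem2}, the functions $\ul\alpha=\min\{\ul\alpha_1,\dots,\ul\alpha_m\}$ and $\ol\alpha=\max\{\ol\alpha_1,\dots,\ol\alpha_m\}$ are in general \emph{not} convex and concave, so Jensen's inequality may not be applied to them directly. The remedy is to apply Jensen mode-by-mode, using property~(i) of Definition~\ref{delta_SGAS_Lya} for the individual $\ul\alpha_p$, $\ol\alpha_p$, and only afterwards pass to the extremum. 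Thus at $t=0$, with $p_0=\upsilon(0)$, property~(ii) gives $W(0)\le\EE[\ol\alpha_{p_0}(\|a-a'\|^q)]\le\ol\alpha_{p_0}(\EE[\|a-a'\|^q])\le\ol\alpha(\EE[\|a-a'\|^q])$; and at time $t$, writing $\bar p=\upsilon(t)$, convexity of $\ul\alpha_{\bar p}$ together with property~(ii) gives $\ul\alpha(\EE[\|\xi_{a\upsilon}(t)-\xi_{a'\upsilon}(t)\|^q])\le\ul\alpha_{\bar p}(\EE[\|\xi_{a\upsilon}(t)-\xi_{a'\upsilon}(t)\|^q])\le W(t)$. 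Chaining these with $W(t)\le W(0)\mathsf{e}^{-\lambda t}$ and inverting $\ul\alpha\in\mathcal{K}_\infty$ gives $\EE[\|\xi_{a\upsilon}(t)-\xi_{a'\upsilon}(t)\|^q]\le\ul\alpha^{-1}(\ol\alpha(\EE[\|a-a'\|^q])\,\mathsf{e}^{-\lambda t})$ uniformly over $\upsilon\in\mathcal{P}_{\tau_d}$, so that~\eqref{delta_SGUAS} holds with $\beta(r,s)\Let\ul\alpha^{-1}(\ol\alpha(r)\,\mathsf{e}^{-\lambda s})$. The dwell-time bookkeeping in the middle step is routine; the genuinely delicate point is precisely this curvature issue, resolved by taking the min and max only after Jensen has been used on each mode.
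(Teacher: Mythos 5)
Your proposal is correct and follows essentially the same route as the paper's proof: exponential decay of the expected active Lyapunov value on each dwell interval (via the Ito--Gronwall estimate of Theorem~\ref{the_Lya}), a factor-$\mu$ inflation at each switch via~\eqref{eq0}, dwell-time counting to convert $\mu^{N(t)}$ into $\mathsf{e}^{(\log\mu/\tau_d)t}$, and the mode-wise application of Jensen's inequality (to the convex $\ul\alpha_p$ and concave $\ol\alpha_p$ individually, before passing to $\ul\alpha$ and $\ol\alpha$), yielding the same $\beta(r,s)=\ul\alpha^{-1}\left(\ol\alpha(r)\,\mathsf{e}^{(\log\mu/\tau_d-\kappa)s}\right)$. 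Your explicit remark that $\ul\alpha$ and $\ol\alpha$ need not inherit convexity/concavity, so Jensen must be applied per mode, is a point the paper handles silently but in exactly the same way.
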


\begin{proof}
The proof is inspired by that of Theorem 2.8 in \cite{girard2} for the non-probabilistic case. We show the result for the case that switching signals have infinite number of discontinuities (switching times). A proof for the case of finite discontinuities can be written in a similar way. Let $a$ and $a'$ be any $\R^n$-valued random variables that are measurable in $\sigalg_0$, $\upsilon\in\mathcal{P}_{\tau_d}$, $t_0=0$, and let $p_{i+1}\in\mathsf{P}$ denote the value of the switching signal on the open interval $(t_i,t_{i+1})$, for $i\in\N_0$. Using (iii) in Definition \ref{delta_SGAS_Lya} for all $i\in\N_0$ and $t\in(t_i,t_{i+1})$, one gets\begin{small}$$\mathcal{L} V_{p_{i+1}}\left(\xi_{a\upsilon}(t),\xi_{a'\upsilon}(t)\right)\leq-\kappa V_{p_{i+1}}\left(\xi_{a\upsilon}(t),\xi_{a'\upsilon}(t)\right).$$\end{small} Similar to the proof of Theorem \ref{theorem2}, for all $i\in\N_0$ and $t\in[t_i,t_{i+1}]$, we have 
\begin{small}
\begin{align}\label{eq1}
\EE&\left[V_{p_{i+1}}(\xi_{a\upsilon}(t),\xi_{a'\upsilon}(t))\right]\leq\EE\left[V_{p_{i+1}}(\xi_{a\upsilon}(t_i),\xi_{a'\upsilon}(t_i))\right]\mathsf{e}^{-\kappa(t-t_i)}. 
\end{align}
\end{small}
Particularly, for $t=t_{i+1}$ and from (\ref{eq0}), it can be checked that for all $i\in\N_0$:
\begin{small}
\begin{align*}
\EE&\left[V_{p_{i+2}}(\xi_{a\upsilon}(t_{i+1}),\xi_{a'\upsilon}(t_{i+1}))\right]\leq\mu\mathsf{e}^{-\kappa(t_{i+1}-t_i)}\EE\left[V_{p_{i+1}}(\xi_{a\upsilon}(t_i),\xi_{a'\upsilon}(t_i))\right]. 
\end{align*}
\end{small}
Using this inequality, we prove by induction that for all $i\in\N_0$
\begin{small}
\begin{equation}\label{eq2}
\EE\left[V_{p_{i+1}}(\xi_{a\upsilon}(t_i),\xi_{a'\upsilon}(t_i))\right]\leq\mu^i\mathsf{e}^{-\kappa{t_i}}\EE\left[V_{p_1}(a,a')\right].
\end{equation} 
\end{small}
From (\ref{eq1}) and (\ref{eq2}), for all $i\in\N_0$ and $t\in[t_i,t_{i+1}]$, one obtains\begin{small}$$\EE\left[V_{p_{i+1}}(\xi_{a\upsilon}(t),\xi_{a'\upsilon}(t))\right]\leq\mu^i\mathsf{e}^{-\kappa{t}}\EE\left[V_{p_1}(a,a')\right].$$\end{small} Since the switching signal $\upsilon$ has dwell time $\tau_d$, then $t_i\geq i\tau_d$ and hence for all $t\in[t_i,t_{i+1}]$, $t\geq i\tau_d$. Since $\mu\geq1$, then for all $i\in\N_0$ and $t\in[t_i,t_{i+1}]$, one has $\mu^i=\mathsf{e}^{i\log{\mu}}\leq\mathsf{e}^{\left(\log{\mu}/\tau_d\right)t}.$ Therefore, for all $i\in\N_0$ and $t\in[t_i,t_{i+1}]$, we get\begin{small}$$\EE\left[V_{p_{i+1}}(\xi_{a\upsilon}(t),\xi_{a'\upsilon}(t))\right]\leq\mathsf{e}^{\left(\left(\log{\mu}/\tau_d\right)-\kappa\right)t}\EE\left[V_{p_1}(a,a')\right].$$\end{small} Using functions $\ul\alpha,\ol\alpha$ and Jensen's inequality, and for all $t\in\R_0^+$, where $t\in[t_i,t_{i+1}]$ for some $i\in\N_0$, we have
\begin{small}
\begin{align}\nonumber
\ul\alpha\left(\EE\left[\left\Vert\xi_{a\upsilon}(t)-\xi_{a'\upsilon}(t)\right\Vert^q\right]\right)&\leq\ul\alpha_{p_{i+1}}\left(\EE\left[\left\Vert\xi_{a\upsilon}(t)-\xi_{a'\upsilon}(t)\right\Vert^q\right]\right)\leq\EE\left[\ul\alpha_{p_{i+1}}\left(\left\Vert\xi_{a\upsilon}(t)-\xi_{a'\upsilon}(t)\right\Vert^q\right)\right]\leq\EE\left[V_{p_{i+1}}(\xi_{a\upsilon}(t),\xi_{a'\upsilon}(t))\right]\\\notag&\leq\mathsf{e}^{\left(\left(\log{\mu}/\tau_d\right)-\kappa\right)t}\EE\left[V_{p_1}(a,a')\right]\leq\mathsf{e}^{\left(\left(\log{\mu}/\tau_d\right)-\kappa\right)t}\EE\left[\ol\alpha_{p_1}\left(\Vert a-a'\Vert^q\right)\right]\\\notag&\leq\mathsf{e}^{\left(\left(\log{\mu}/\tau_d\right)-\kappa\right)t}\ol\alpha_{p_1}\left(\EE\left[\Vert a-a'\Vert^q\right]\right)\leq\mathsf{e}^{\left(\left(\log{\mu}/\tau_d\right)-\kappa\right)t}\ol\alpha\left(\EE\left[\left\Vert a-a'\right\Vert^q\right]\right).
\end{align}
\end{small}
Therefore, for all $t\in\R_0^+$
\begin{small}
\begin{align*}
\EE&\left[\left\Vert\xi_{a\upsilon}(t)-\xi_{a'\upsilon}(t)\right\Vert^q\right]\leq\ul\alpha^{-1}\left(\mathsf{e}^{\left(\left(\log{\mu}/\tau_d\right)-\kappa\right)t}\ol\alpha\left(\EE\left[\Vert a-a'\Vert^q\right]\right)\right). 
\end{align*}
\end{small}
Then condition (\ref{delta_SGUAS}) holds with the function $\beta(r,s)\Let\ul\alpha^{-1}\left(\ol\alpha({r})\mathsf{e}^{\left(\left(\log{\mu}/\tau_d\right)-\kappa\right)s}\right)$ which is a $\mathcal{KL}$ function since by assumption $\log\mu/\tau_d-\kappa<0$. The same inequality holds for switching signals with a finite number of discontinuities, 
hence the stochastic switched system $\Sigma_{\tau_d}$ is $\delta$-GUAS-M$_q$. 
\end{proof}

In order to show some of the main results of the paper in Section \ref{existence}, 
we need the following technical result, 
which provides an upper bound on the distance (in the $q$th moment) between the solution processes of $\Sigma_p$ (resp. $\Sigma_{\tau_d,p}$) and the corresponding non-probabilistic subsystem $\ol\Sigma_p$ (resp. $\ol\Sigma_{\tau_d,p}$), obtained by disregarding the diffusion term $g_p$. 
From now on, we use the notation $\ol\xi_{xp}$ to denote the trajectory of $\ol\Sigma_p$ (resp. $\ol\Sigma_{\tau_d,p}$) starting from the initial condition $x$ and satisfying the ordinary differential equation (ODE) $\dot{\ol\xi}_{xp}=f_p\left(\ol\xi_{xp}\right)$. 

\begin{lemma}
\label{lem:moment est}
	Consider a stochastic subsystem $\Sigma_p$ (resp. $\Sigma_{\tau_d,p}$) such that $f_p(0_n)=0_n$ and $g_p(0_n) = 0_{n\times{\widehat{q}}}$.	
	Suppose $q\geq2$ and there exists a $\delta$-GAS-M$_q$ Lyapunov function $V_p$ for $\Sigma_p$ (resp. $\Sigma_{\tau_d,p}$) such that its Hessian is a positive semidefinite matrix in $\R^{2n\times2n}$ and $\partial_{x,x}V_p(x,x')\leq P_p$, $\forall x,x'\in\R^n$ and some positive semidefinite matrix $P_p\in\R^{n\times{n}}$. Then for any $x\in\R^n$, we have \begin{small}$\EE \left[\left\Vert\traj{\xi}{x}{p}(t)-\traj{\ol \xi}{x}{p}(t)\right\Vert^q\right] \le h_x^p(t)$\end{small}, 
	where 
	\begin{small}
	\begin{align}\nonumber
	h_x^p(t)=&\ul\alpha_p^{-1}\left(\frac{1}{2}\left\Vert{\sqrt{P_p}}\right\Vert^2\min\{n,\widehat{q}\}Z_p^2\mathsf{e}^{-\kappa_p t}\int_0^t\left(\beta_p\left(\left\Vert{x}\right\Vert^q,s\right)\right)^{\frac{2}{q}}ds\right),
	\end{align}
	\end{small}$Z_p$ is the Lipschitz constant, introduced in Definition \ref{Def_control_sys}, and $\beta_p$ is the $\mathcal{KL}$ function\footnote{Using a $\delta$-GAS-M$_q$ Lyapunov function $V_p$, one can always choose $\beta_p(r,s)=\ul{\alpha}_p^{-1}\left(\ol{\alpha}_p\left(r\right)\mathsf{e}^{-\kappa_ps}\right)$, as showed in Theorem \ref{the_Lya}.} appearing in \eqref{dGAS}.
\end{lemma}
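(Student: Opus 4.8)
The plan is to track the Lyapunov function $V_p$ along the coupled pair $(\xi_{xp},\ol\xi_{xp})$ and to use that both trajectories start at the same point $x$, so that their discrepancy is produced purely by the diffusion $g_p$. Put $\Phi(t)\Let V_p(\xi_{xp}(t),\ol\xi_{xp}(t))$; since $\xi_{xp}(0)=\ol\xi_{xp}(0)=x$, property (ii) of Definition \ref{delta_SGAS_Lya} forces $\Phi(0)=0$. First I would apply It\^o's formula to $\Phi$. Because $\ol\xi_{xp}$ obeys the deterministic ODE $\dot{\ol\xi}_{xp}=f_p(\ol\xi_{xp})$ and contributes no Brownian part, the only second-order contribution comes from $\partial_{x,x}V_p$, so the drift of $\Phi$ equals
\[
\mathcal{A}V_p(\xi,\ol\xi)=\partial_xV_p\,f_p(\xi)+\partial_{x'}V_p\,f_p(\ol\xi)+\tfrac12\Tr\!\left(g_p(\xi)g_p^T(\xi)\,\partial_{x,x}V_p\right),
\]
which differs from the expression $\mathcal{L}V_p$ of Definition \ref{delta_SGAS_Lya}(iii) only in its diffusion part.

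The key step is to transfer the decay rate $-\kappa_p$ from property (iii) to $\mathcal{A}V_p$. The second-order term of $\mathcal{L}V_p$ is $\tfrac12\Tr(GG^T\,\mathrm{Hess}\,V_p)$ with $G=\left[g_p^T(\xi)\ \ g_p^T(\ol\xi)\right]^{T}$, i.e. a trace of two positive semidefinite matrices (the Hessian being positive semidefinite by hypothesis), hence nonnegative. Dropping this nonnegative term from $\mathcal{L}V_p\le-\kappa_pV_p$ shows that the pure drift $\partial_xV_p\,f_p(\xi)+\partial_{x'}V_p\,f_p(\ol\xi)$ is itself at most $-\kappa_pV_p(\xi,\ol\xi)$. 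Therefore $\mathcal{A}V_p(\xi,\ol\xi)\le-\kappa_pV_p(\xi,\ol\xi)+\tfrac12\Tr(g_p(\xi)g_p^T(\xi)\,\partial_{x,x}V_p)$. I would then estimate the residual term with the two structural hypotheses: from $\partial_{x,x}V_p\le P_p$ and the global Lipschitz continuity of $g_p$ together with $g_p(0_n)=0_{n\times\widehat q}$, one has $\Tr(g_p g_p^T P_p)=\|\sqrt{P_p}\,g_p(\xi)\|_F^2\le\|\sqrt{P_p}\|^2\min\{n,\widehat q\}Z_p^2\|\xi\|^2$.

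Taking expectations (the stochastic integral being a martingale after the usual localization, so its mean vanishes) gives $\tfrac{d}{dt}\EE[\Phi]\le-\kappa_p\EE[\Phi]+\tfrac12\|\sqrt{P_p}\|^2\min\{n,\widehat q\}Z_p^2\,\EE[\|\xi_{xp}\|^2]$. To control the last factor I would compare $\xi_{xp}$ with the equilibrium trajectory: since $f_p(0_n)=0_n$ and $g_p(0_n)=0$, uniqueness forces $\xi_{0_np}\equiv0_n$, so $\delta$-GAS-M$_q$ yields $\EE[\|\xi_{xp}(s)\|^q]\le\beta_p(\|x\|^q,s)$, and Jensen's inequality (valid since $r\mapsto r^{2/q}$ is concave for $q\ge2$) upgrades this to $\EE[\|\xi_{xp}(s)\|^2]\le(\beta_p(\|x\|^q,s))^{2/q}$. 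Inserting this bound into Gronwall's inequality with the initial value $\Phi(0)=0$ produces the claimed exponentially decaying integral, and finally inverting $\ul\alpha_p$ through $\ul\alpha_p(\EE[\|\xi_{xp}-\ol\xi_{xp}\|^q])\le\EE[\ul\alpha_p(\|\xi_{xp}-\ol\xi_{xp}\|^q)]\le\EE[\Phi]$ (Jensen again, $\ul\alpha_p$ being convex) delivers $h_x^p(t)$.

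The main obstacle is not the algebra but the regularity of $V_p$: it is only twice continuously differentiable off the diagonal $\Delta$, so It\^o's formula cannot be applied blindly when $(\xi_{xp},\ol\xi_{xp})$ nears $\Delta$, and this must be justified by a stopping-time/limiting argument (or by smoothing $V_p$), alongside a check that $\partial_xV_p\,g_p(\xi_{xp})$ is square-integrable so that the martingale term indeed has zero expectation. It is worth stressing that the passage from the full generator to the drift-only inequality rests essentially on the positive semidefiniteness of the Hessian, which is precisely why that assumption appears in the statement.
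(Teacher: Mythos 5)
Your strategy is the intended one: the paper's own proof is a one-line deferral to Lemma 3.7 of \cite{majid8}, and your reconstruction---It\^o's formula for the SDE/ODE pair (so that only $\partial_{x,x}V_p$ survives in the second-order part), discarding the nonnegative term $\tfrac12\Tr\left(GG^T\,\mathrm{Hess}\,V_p\right)$ by positive semidefiniteness of the Hessian, the bound $\Tr\left(g_pg_p^T\partial_{x,x}V_p\right)\leq\left\Vert\sqrt{P_p}\right\Vert^2\min\{n,\widehat{q}\}Z_p^2\Vert\xi\Vert^2$ from $\partial_{x,x}V_p\preceq P_p$ together with Lipschitz continuity and $g_p(0_n)=0_{n\times\widehat{q}}$, the identification $\xi_{0_np}\equiv 0_n$ followed by Jensen's inequality for the concave map $r\mapsto r^{2/q}$ (exactly where $q\geq 2$ enters), and the final inversion through the convex $\ul\alpha_p$---is precisely that argument with the $\gamma(\cdot)$ terms eliminated. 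Your flagging of the regularity of $V_p$ off the diagonal and of the martingale integrability is also appropriate.

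There is, however, a genuine gap at your last step, and it is not cosmetic. From $\tfrac{d}{dt}\EE[\Phi]\leq-\kappa_p\EE[\Phi]+c(t)$ with $\Phi(0)=0$ and $c(s)=\tfrac12\Vert\sqrt{P_p}\Vert^2\min\{n,\widehat{q}\}Z_p^2\left(\beta_p\left(\Vert x\Vert^q,s\right)\right)^{2/q}$, the comparison lemma gives $\EE[\Phi(t)]\leq\int_0^t\mathsf{e}^{-\kappa_p(t-s)}c(s)ds=\mathsf{e}^{-\kappa_pt}\int_0^t\mathsf{e}^{\kappa_ps}c(s)ds$, which is \emph{strictly larger} than the quantity $\mathsf{e}^{-\kappa_pt}\int_0^tc(s)ds$ appearing inside $h_x^p(t)$, since $\mathsf{e}^{\kappa_ps}\geq1$; the ``claimed exponentially decaying integral'' therefore does not come out of your derivation, and no further inequality can bridge the two. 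The stated form would only follow by applying Gronwall's inequality with the negative coefficient $-\kappa_p$ to the integral inequality $\EE[\Phi(t)]\leq\int_0^tc(s)ds-\kappa_p\int_0^t\EE[\Phi(s)]ds$, which is an invalid use of Gronwall, and indeed the stated bound is false: take $n=\widehat{q}=1$, $f_p(x)=-x$, $g_p(x)=Z_px$ with $Z_p^2=1/2$, $V_p(x,x')=(x-x')^2$, $q=2$, so that $\kappa_p=2-Z_p^2=3/2$, $\ul\alpha_p(r)=\ol\alpha_p(r)=r$, $P_p=2$, $\beta_p(r,s)=r\mathsf{e}^{-\kappa_ps}$; a direct computation gives $\EE\left[\left\vert\xi_{xp}(t)-\ol\xi_{xp}(t)\right\vert^2\right]=x^2\mathsf{e}^{-\kappa_pt}\left(1-\mathsf{e}^{-Z_p^2t}\right)$, whereas the lemma's $h_x^p(t)=x^2\mathsf{e}^{-\kappa_pt}\tfrac{Z_p^2}{\kappa_p}\left(1-\mathsf{e}^{-\kappa_pt}\right)$, and the former exceeds the latter for every $t>0$ because $\lambda\mapsto(1-\mathsf{e}^{-\lambda t})/\lambda$ is strictly decreasing and $Z_p^2<\kappa_p$. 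So the defect sits in the displayed $h_x^p$ itself (the factor $\mathsf{e}^{\kappa_ps}$ is missing inside the integral, an error your write-up silently reproduces at its final line): what your argument actually proves, and what should be stated, is $\EE\left[\left\Vert\xi_{xp}(t)-\ol\xi_{xp}(t)\right\Vert^q\right]\leq\ul\alpha_p^{-1}\left(\tfrac12\Vert\sqrt{P_p}\Vert^2\min\{n,\widehat{q}\}Z_p^2\int_0^t\mathsf{e}^{-\kappa_p(t-s)}\left(\beta_p\left(\Vert x\Vert^q,s\right)\right)^{2/q}ds\right)$. With that correction---which preserves all qualitative properties of $h_x^p$ used downstream ($h_x^p\to0$ as $t\to0$, $t\to+\infty$, or $Z_p\to0$), at the price of slightly more conservative quantitative conditions---your proof is sound; as written, it asserts a conclusion its own Gronwall step cannot deliver.
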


It can be readily seen that the nonnegative valued function $h_x^p$ tends to zero as $t \ra 0$, $t\ra+\infty$, or as $Z_p\ra0$ and is identically zero if the diffusion term is identically zero (i.e. $Z_p=0$) which is the case for $\ol\Sigma_p$ (resp. $\ol\Sigma_{\tau_d,p}$).

\begin{proof}
The proof is similar to the proof of Lemma 3.7 in \cite{majid8}, where one needs to eliminate all the terms $\gamma(\cdot)$. 
\end{proof}

The interested readers are referred to the results in \cite{majid8}, providing a result in line with that of Lemma \ref{lem:moment est} for an (affine) stochastic subsystem $\Sigma_p$ (resp. $\Sigma_{\tau_d,p}$) admitting a specific type of $\delta$-GAS-M$_q$ Lyapunov functions. 
For later use, we introduce function $h_x(t)=\max\left\{h_x^1(t),\ldots,h_x^m(t)\right\}$ for all $t\in\R_0^+$.
\vspace{-0.2cm}

\section{Systems and Approximate Equivalence Notions}\label{symbolic}
We employ the notion of \emph{system}, introduced in \cite{paulo}, to provide (in Sec. \ref{existence}) an alternative description of stochastic switched systems that can be later directly related to their symbolic models. 
\begin{definition}\label{system}
A system $S$ is a tuple 
$S=(X,X_0,U,\longrightarrow,Y,H),$
where
$X$ is a set of states (possibly infinite), 
$X_0\subseteq X$ is a set of initial states (possibly infinite), 
$U$ is a set of inputs (possibly infinite),
$\longrightarrow\subseteq X\times U\times X$ is a transition relation,
$Y$ is a set of outputs, and
$H:X\rightarrow Y$ is an output map.
\end{definition}

We write $x\rTo^{u}x'$ if \mbox{$(x,u,x')\in\longrightarrow$}. 
If $x\rTo^{u}x'$, we call state  $x'$ a \mbox{$u$-successor}, or simply a successor, of state $x$.
For technical reasons, we assume that for each $x\in X$, there is some $u$-successor of $x$, for some $u\in U$ -- let us remark that this is always the case for the considered systems later in this paper. 

A system $S$ is said to be
\begin{itemize}
\item \textit{metric}, if the output set $Y$ is equipped with a metric
$\mathbf{d}:Y\times Y\rightarrow\mathbb{R}_{0}^{+}$;

\item \textit{finite} (or \textit{symbolic}), if $X$ and $U$ are finite sets;

\item \textit{deterministic}, if for any state $x\in{X}$ and any input $u\in{U}$, there exists at most one \mbox{$u$-successor}.
\end{itemize}

For a system $S=(X,X_0,U,\longrightarrow,Y,H)$, given any initial state $x_0\in{X_0}$, a finite state run generated from $x_0$ is a finite sequence of transitions: 
\begin{small}
\begin{align}
\label{run}
x_0\rTo^{u_0}x_1\rTo^{u_1}x_2\rTo^{u_2}\cdots\rTo^{u_{n-2}}x_{n-1}\rTo^{u_{n-1}}x_n,
\end{align}
\end{small}such that $x_i\rTo^{u_i}x_{i+1}$ for all $0\leq i<n$. A finite state run can be trivially extended to an infinite state run \cite{paulo}. A finite output run is a sequence $\left\{y_0,y_1,\ldots,y_n\right\}$ such that there exists a finite state run of the form \eqref{run} with $y_i=H(x_i)$, for $i=1,\ldots,n$. A finite output run can also be directly extended to an infinite output run as well \cite{paulo}.

Now, we recall the notion of approximate (bi)simulation relation, introduced in \cite{girard}, 
which is useful when analyzing or synthesizing controllers for deterministic systems.  
 
\begin{definition}\label{APSR}
Consider metric systems $S_{a}=(X_{a},X_{a0},U_{a},\rTo_{a},Y_a,H_{a})$ and
$S_{b}=(X_{b},X_{b0},U_{b},\rTo_{b},Y_b,H_{b})$ with the
same output sets $Y_a=Y_b$ and metric $\mathbf{d}$.
For $\varepsilon\in\mathbb{R}_0^{+}$,
a relation
\mbox{$R\subseteq X_{a}\times X_{b}$} is said to be an $\varepsilon$-approximate simulation relation from $S_{a}$ to $S_{b}$
if for all $(x_a,x_b)\in R$ the following two conditions are satisfied:
\begin{itemize}
\item[(i)] \mbox{$\mathbf{d}(H_{a}(x_{a}),H_{b}(x_{b}))\leq\varepsilon$};
\item[(ii)] \mbox{$x_{a}\rTo_{a}^{u_a}x'_{a}$ in $S_a$} implies the existence of \mbox{$x_{b}\rTo_{b}^{u_b}x'_{b}$} in $S_b$ satisfying $(x'_{a},x'_{b})\in R$.
\end{itemize}
A relation $R\subseteq X_a\times X_b$ is said to be an $\varepsilon$-approximate bisimulation relation between $S_a$ and $S_b$
if $R$ is an $\varepsilon$-approximate simulation relation from $S_a$ to $S_b$ and
$R^{-1}$ is an $\varepsilon$-approximate simulation relation from $S_b$ to $S_a$.

System $S_{a}$ is $\varepsilon$-approximately simulated by $S_{b}$, or $S_b$ $\varepsilon$-approximately simulates $S_a$,
denoted by \mbox{$S_{a}\preceq_{\mathcal{S}}^{\varepsilon}S_{b}$}, if there exists an $\varepsilon$-approximate simulation relation $R$ from $S_a$ to $S_b$ such that:
\begin{itemize}
\item $\forall x_{a0}\in{X_{a0}}$, $\exists x_{b0}\in{X_{b0}}$ with $(x_{a0},x_{b0})\in{R}$.
\end{itemize}
System $S_{a}$ is $\varepsilon$-approximately bisimilar to $S_{b}$, denoted by \mbox{$S_{a}\cong_{\mathcal{S}}^{\varepsilon}S_{b}$}, if there exists an $\varepsilon$-approximate bisimulation relation $R$ between $S_a$ and $S_b$ such that:
\begin{itemize}
\item $\forall x_{a0}\in{X_{a0}}$, $\exists x_{b0}\in{X_{b0}}$ with $(x_{a0},x_{b0})\in{R}$;
\item $\forall x_{b0}\in{X_{b0}}$, $\exists x_{a0}\in{X_{a0}}$ with $(x_{a0},x_{b0})\in{R}$.
\end{itemize}
\end{definition}

\section{Symbolic Models for Stochastic Switched Systems}\label{existence}
The main results of this work are presented in this section. 
We show that for any stochastic switched system $\Sigma$ (resp. $\Sigma_{\tau_d}$), 
admitting a common (resp. multiple) $\delta$-GAS-M$_q$ Lyapunov function(s), 
and for any precision level $\varepsilon\in\R^+$, one can construct a finite system that is $\varepsilon$-approximately bisimilar to $\Sigma$ (resp. $\Sigma_{\tau_d}$). 

\subsection{Describing stochastic switched systems as metric systems}
In order to show the main results of the paper, 
we use systems as an abstract representation of stochastic switched systems, capturing all the information they contain at the sampling times.   
More precisely, given a stochastic switched system $\Sigma$ and a sampling time $\tau$, 
we define an associated metric system $S_\tau(\Sigma)=(X_\tau,X_{\tau0},U_\tau,\rTo_\tau,Y_\tau,H_\tau),$ 
where:
\begin{itemize}
\item $X_\tau$ is the set of all $\R^n$-valued random variables defined on 
the probability space $(\Omega,\sigalg,\PP)$;

\item $X_{\tau0}\subseteq\mathcal{X}_0$, where $\mathcal{X}_0$ is the set of all $\R^n$-valued random variables that are measurable over $\sigalg_0$;

\item $U_\tau=\mathsf{P}$;

\item $x_\tau\rTo^{p}_\tau x'_\tau$ if $x_\tau$ and $x'_\tau$ are measurable, 
respectively, in $\sigalg_{k\tau}$ and $\sigalg_{(k+1)\tau}$ for some $k \in\N_0$, and 
there exists a solution process $\xi:\Omega\times\R_0^+\rightarrow\R^n$ of $\Sigma$ satisfying $\xi(k\tau) = x_\tau$ and $\xi_{x_\tau p}(\tau) = x'_\tau$ $\PP$-a.s.;

\item $Y_\tau=X_\tau$;

\item $H_\tau(x_\tau)=x_{\tau}$ for any $x_\tau\in X_\tau$.
\end{itemize}
We assume that the output set $Y_\tau$ is equipped with the metric $\mathbf{d}(y,y')=\left(\EE\left[\left\Vert y-y'\right\Vert^q\right]\right)^{\frac{1}{q}}$, for any $y,y'\in{Y_\tau}$ and some $q\geq1$. Let us remark that the set of states of $S_\tau(\Sigma)$ is uncountable and that ${S_{\tau}}(\Sigma)$ is a deterministic system in the sense of Definition \ref{system}, since (cf. Subsection \ref{sss})  
its solution process is uniquely determined given an initial condition. 

Note that a finite state run
$x_{0}\rTo^{p_0}_{\tau}x_{1}\rTo^{p_1}_{\tau} \,\cdots\, \rTo^{p_{N-1}}_{\tau} x_{N}$ of $S_{\tau}(\Sigma)$, where $p_{i-1}\in\mathsf{P}$ and $x_i=\xi_{x_{i-1}p_{i-1}}(\tau)$ $\PP$-a.s. for $i=1,\ldots,N$, captures the trajectory of the stochastic switched system $\Sigma$ at times $t=0,\tau,\ldots,N\tau$. This trajectory is  
started from the initial condition $x_{0}\in X_{\tau0}$ and resulting from a switching signal $\upsilon$ obtained by the concatenation of the modes  
$p_{i-1}$ \big(i.e. $\upsilon(t)=p_{i-1}$ for any $t\in [(i-1)\tau,i\,\tau[$\big), for $i=1,\ldots,N$.

Now we represent a stochastic switched system $\Sigma_{\tau_d}$ with a metric system where, without loss of generality, 
we assume that $\tau_d$ is an integer multiple of $\tau$, 
i.e. $\exists\widehat{N}\in\N$ such that $\tau_d=\widehat{N}\tau$. Given a stochastic switched system $\Sigma_{\tau_d}$ and a sampling time $\tau\in\R^+$, 
we define the metric system $S_\tau\left(\Sigma_{\tau_d}\right)=(X_\tau,X_{\tau0},U_\tau,\rTo_\tau,Y_\tau,H_\tau),$ 
where:
\begin{itemize}
\item $X_\tau=\mathcal{X}\times\mathsf{P}\times\{0,\ldots,\widehat{N}-1\}$, where $\mathcal{X}$ is the set of all $\R^n$-valued random variables defined on the probability space $(\Omega,\sigalg,\PP)$;

\item $X_{\tau0}\subseteq\mathcal{X}_0\times\mathsf{P}\times\{0,\ldots,\widehat{N}-1\}$, where $\mathcal{X}_0$ is the set of all $\R^n$-valued random variables that are measurable over $\sigalg_0$;

\item $U_\tau=\mathsf{P}$;

\item $\left(x_\tau,p,i\right)\rTo^{p}_\tau\left({x'_\tau},p',i'\right)$ if $x_\tau$ and $x'_\tau$ are measurable, 
respectively, in $\sigalg_{k\tau}$ and $\sigalg_{(k+1)\tau}$ for some $k \in \N_0$, and 
there exists a solution process $\xi:\Omega\times\R_0^+\rightarrow\R^n$ of $\Sigma_{\tau_d}$ satisfying $\xi(k\tau) = x_\tau$ and $\xi_{x_\tau p}(\tau) = x'_\tau$ $\PP$-a.s. and one of the following conditions hold: 

\begin{itemize}
\item $i<\widehat{N}-1$, $p'=p$, and $i'=i+1$: switching is not allowed because the time elapsed since the latest switch is strictly smaller than the dwell time;

\item $i=\widehat{N}-1$, $p'=p$, and $i'=\widehat{N}-1$: switching is allowed but no switch occurs;

\item $i=\widehat{N}-1$, $p'\neq p$, and $i'=0$: switching is allowed and a switch occurs. 
\end{itemize}

\item $Y_\tau=\mathcal{X}$;

\item $H_\tau\left(x_\tau,p,i\right)=x_\tau$ for any $(x_\tau,p,i)\in X_\tau$.
\end{itemize}
We assume that the output set $Y_\tau$ is equipped with the metric $\mathbf{d}(y,y')=(\EE[\left\Vert y-y'\right\Vert^q])^{\frac{1}{q}}$, $\forall y,y'\in{Y_\tau}$ and for some $q\geq1$. One can readily verify that the (in)finite output runs of $S_{\tau}\left(\Sigma_{\tau_d}\right)$ are the (in)finite output runs of $S_\tau(\Sigma)$ corresponding to switching signals with dwell time $\tau_d=\widehat{N}\tau$.

In order to show the main results of this work, 
we assume that for any $\delta$-GAS-M$_q$ Lyapunov functions $V_p$, there exists a $\mathcal{K}_\infty$ and concave function $\widehat\gamma_p$ such that
\begin{equation}\label{supplement}
\vert V_p(x,y)-V_p(x,z)\vert\leq\widehat\gamma_p\left(\Vert y-z\Vert\right),
\end{equation}
for any $x,y,z\in\R^n$. 
This assumption is not restrictive at all, provided the function $V_p$ is limited to a compact subset of $\R^n\times\R^n$. For all $x,y,z\in\mathsf{D}$, where $\mathsf{D}$ is a compact subset of $\R^n$, by applying the mean value theorem 
to the function $y\rightarrow V_p(x,y)$, one gets 
\begin{small}
\begin{align}\notag
&\left\vert V_p(x,y)-V_p(x,z)\right\vert\leq\widehat\gamma_p\left(\Vert y-z\Vert\right),\text{s.t.}~\widehat\gamma_p({r})=\left(\max_{(x,y)\in\mathsf{D}\backslash\Delta}\left\Vert\frac{\partial{V_p}(x,y)}{\partial{y}}\right\Vert\right)r.
\end{align}
\end{small}
For later use, let us define the $\mathcal{K}_\infty$ function $\widehat\gamma$ such that $\widehat\gamma=\max\left\{\widehat\gamma_1,\ldots,\widehat\gamma_m\right\}$. (Note that, for the case of a common Lyapunov function, we have: $\widehat\gamma=\widehat\gamma_1=\cdots=\widehat\gamma_m$.) 
We proceed presenting the main results of this work.  

\subsection{First approach}
This subsection contains the first main results of the paper which are based on the state space discretization. For later use in this subsection, let us define the function $h_X(t)=\max_{x\in X}h_x(t)$, for a set $X\subseteq \R^n$.

\subsubsection{Common Lyapunov function}
We show the first result on finite abstractions based on the existence of a common $\delta$-GAS-M$_q$ Lyapunov function for subsystems $\Sigma_1,\ldots,\Sigma_m$. 
Consider a stochastic switched system $\Sigma$ 
and a pair $\mathsf{q}=(\tau,\eta)$ of quantization parameters, 
where $\tau$ is the sampling time and $\eta$ is the state space quantization. 
Given $\Sigma$ and $\mathsf{q}$, consider the following system:
\begin{small}
\begin{equation}\label{T2}
S_{\mathsf{q}}(\Sigma)=(X_{\mathsf{q}},X_{\params0},U_{\mathsf{q}},\rTo_{\mathsf{q}},Y_{\mathsf{q}},H_{\mathsf{q}}),
\end{equation}
\end{small}where $X_{\mathsf{q}}=[\R^n]_\eta$, $X_{\params0}=[\R^n]_\eta$, $U_{\mathsf{q}}=\mathsf{P}$, $Y_{\mathsf{q}}=Y_\tau$, and
\begin{itemize}
\item $x_{\mathsf{q}}\rTo_{\mathsf{q}}^{p}x'_{\mathsf{q}}$ if there exists a $x'_\params\in X_\params$ such that $\left\Vert\ol{\xi}_{x_{\mathsf{q}}p}(\tau)-x'_{\mathsf{q}}\right\Vert\leq\eta$;


\item $H_{\mathsf{q}}(x_\params)=x_\params$ for any $x_\params\in X_{\mathsf{q}}$.
\end{itemize}

In order to relate models, 
the output set $Y_\params$ is taken to be that of the system $S_\tau(\Sigma)$. 
Therefore, $H_\params$, with a slight abuse of notation, is a mapping from a grid point to a random variable with a Dirac probability distribution centered at the grid point. 


We now present the first main result of the paper. In order to show the next result, we assume that $f_p(0_n)=0_n$ only if $\Sigma_p$ is not affine and that $g_p(0_n)=0_{n\times\widehat{q}}$ for any $p\in\mathsf{P}$.

\begin{theorem}\label{main_theorem1}
Let $\Sigma$ be a stochastic switched system admitting a common $\delta$-GAS-M$_q$ Lyapunov function $V$, 
of the form discussed in Lemma \ref{lem:moment est}, for subsystems $\Sigma_1,\ldots,\Sigma_m$. For $X_{\tau0}=\R^n$, any $\varepsilon\in\R^+$, and any double $\mathsf{q}=(\tau,\eta)$ of quantization parameters satisfying
\begin{small}
\begin{align}\label{bisim_cond1}
\overline\alpha\left(\eta^q\right)&\leq\underline\alpha\left(\varepsilon^q\right),\\\label{bisim_cond}
\mathsf{e}^{-\kappa\tau}\underline\alpha\left(\varepsilon^q\right)+\widehat\gamma\left(\left(h_{[X_{\tau0}]_\eta}(\tau)\right)^{\frac{1}{q}}+\eta\right)&\leq\underline\alpha\left(\varepsilon^q\right),
\end{align}
\end{small}
we have that \mbox{$S_{\mathsf{q}}(\Sigma)\cong_{\mathcal{S}}^{\varepsilon}S_{\tau}(\Sigma)$}.
\end{theorem}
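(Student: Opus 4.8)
The plan is to construct an explicit $\varepsilon$-approximate bisimulation relation between $S_\tau(\Sigma)$ and $S_{\mathsf{q}}(\Sigma)$ and to verify the two items of Definition \ref{APSR} together with the initial-state matching; since bisimilarity is symmetric, this yields $S_{\mathsf{q}}(\Sigma)\cong_{\mathcal{S}}^{\varepsilon}S_\tau(\Sigma)$. Guided by the common Lyapunov function and the $q$th-moment metric, I would take
\[
R=\left\{(x_\tau,x_{\mathsf{q}})\in X_\tau\times X_{\mathsf{q}}\mid \EE\left[V(x_\tau,x_{\mathsf{q}})\right]\leq\underline\alpha(\varepsilon^q)\right\},
\]
reading each grid point $x_{\mathsf{q}}$ as a Dirac random variable. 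Condition (i) is then immediate: for $(x_\tau,x_{\mathsf{q}})\in R$, property (ii) of Definition \ref{delta_SGAS_Lya} together with Jensen's inequality (convexity of $\underline\alpha$ from property (i)) gives $\underline\alpha\left(\EE\left[\Vert x_\tau-x_{\mathsf{q}}\Vert^q\right]\right)\leq\EE\left[V(x_\tau,x_{\mathsf{q}})\right]\leq\underline\alpha(\varepsilon^q)$, so that $\mathbf{d}(H_\tau(x_\tau),H_{\mathsf{q}}(x_{\mathsf{q}}))=\left(\EE\left[\Vert x_\tau-x_{\mathsf{q}}\Vert^q\right]\right)^{1/q}\leq\varepsilon$.

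The heart of the proof is propagating $R$ across a transition. Fix $(x_\tau,x_{\mathsf{q}})\in R$ and a mode $p$, let $x'_\tau=\xi_{x_\tau p}(\tau)$ be the unique successor in $S_\tau(\Sigma)$, and choose $x'_{\mathsf{q}}$ to be a grid point nearest to $\ol\xi_{x_{\mathsf{q}}p}(\tau)$, so that $\Vert\ol\xi_{x_{\mathsf{q}}p}(\tau)-x'_{\mathsf{q}}\Vert\leq\eta$ and the transition $x_{\mathsf{q}}\rTo_{\mathsf{q}}^{p}x'_{\mathsf{q}}$ exists. I would bound $\EE\left[V(x'_\tau,x'_{\mathsf{q}})\right]$ by inserting the stochastic trajectory $\xi_{x_{\mathsf{q}}p}(\tau)$ issued from the deterministic point $x_{\mathsf{q}}$ and writing
\[
V(\xi_{x_\tau p}(\tau),x'_{\mathsf{q}})\leq V(\xi_{x_\tau p}(\tau),\xi_{x_{\mathsf{q}}p}(\tau))+\left|V(\xi_{x_\tau p}(\tau),x'_{\mathsf{q}})-V(\xi_{x_\tau p}(\tau),\xi_{x_{\mathsf{q}}p}(\tau))\right|.
\]
The first term is treated exactly as in the proof of Theorem \ref{the_Lya} (Itô's formula and Gronwall's inequality applied to the common $V$ along two copies of mode $p$, invoking time-homogeneity of \eqref{eq10} to start both processes afresh), giving $\EE\left[V(\xi_{x_\tau p}(\tau),\xi_{x_{\mathsf{q}}p}(\tau))\right]\leq\mathsf{e}^{-\kappa\tau}\EE\left[V(x_\tau,x_{\mathsf{q}})\right]\leq\mathsf{e}^{-\kappa\tau}\underline\alpha(\varepsilon^q)$. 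For the second term I apply \eqref{supplement} and the triangle inequality $\Vert x'_{\mathsf{q}}-\xi_{x_{\mathsf{q}}p}(\tau)\Vert\leq\eta+\Vert\ol\xi_{x_{\mathsf{q}}p}(\tau)-\xi_{x_{\mathsf{q}}p}(\tau)\Vert$, take expectations, and use concavity of $\widehat\gamma$, Jensen's inequality, the moment bound $\EE\left[\Vert Z\Vert\right]\leq\left(\EE\left[\Vert Z\Vert^q\right]\right)^{1/q}$, and Lemma \ref{lem:moment est} together with $x_{\mathsf{q}}\in[\R^n]_\eta=[X_{\tau0}]_\eta$; this collapses the term into the single quantity $\widehat\gamma\left(\left(h_{[X_{\tau0}]_\eta}(\tau)\right)^{1/q}+\eta\right)$. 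Summing and invoking \eqref{bisim_cond} yields $\EE\left[V(x'_\tau,x'_{\mathsf{q}})\right]\leq\underline\alpha(\varepsilon^q)$, i.e. $(x'_\tau,x'_{\mathsf{q}})\in R$; thus $R$ is an $\varepsilon$-approximate simulation from $S_\tau(\Sigma)$ to $S_{\mathsf{q}}(\Sigma)$.

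For the reverse inclusion, given $(x_\tau,x_{\mathsf{q}})\in R$ and any transition $x_{\mathsf{q}}\rTo_{\mathsf{q}}^{p}x'_{\mathsf{q}}$ (so $\Vert\ol\xi_{x_{\mathsf{q}}p}(\tau)-x'_{\mathsf{q}}\Vert\leq\eta$), determinism of $S_\tau(\Sigma)$ forces the matching successor $x'_\tau=\xi_{x_\tau p}(\tau)$, and the very same chain of inequalities gives $(x'_\tau,x'_{\mathsf{q}})\in R$, so $R^{-1}$ is an $\varepsilon$-approximate simulation from $S_{\mathsf{q}}(\Sigma)$ to $S_\tau(\Sigma)$. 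It remains to match initial states: for $a\in X_{\tau0}=\R^n$ I pick the nearest grid point $x_{\mathsf{q}0}\in[\R^n]_\eta$, for which $\Vert a-x_{\mathsf{q}0}\Vert\leq\eta$ and hence $\EE\left[V(a,x_{\mathsf{q}0})\right]=V(a,x_{\mathsf{q}0})\leq\overline\alpha(\eta^q)\leq\underline\alpha(\varepsilon^q)$ by property (ii) and \eqref{bisim_cond1}; conversely, for $x_{\mathsf{q}0}\in[\R^n]_\eta$ the choice $a=x_{\mathsf{q}0}$ gives $V(x_{\mathsf{q}0},x_{\mathsf{q}0})=0$. Hence $R$ is an $\varepsilon$-approximate bisimulation relation with matched initial conditions and $S_{\mathsf{q}}(\Sigma)\cong_{\mathcal{S}}^{\varepsilon}S_\tau(\Sigma)$.

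The step I expect to be the main obstacle is the middle estimate: arranging the decomposition so that the stochastic-versus-deterministic trajectory mismatch (controlled by Lemma \ref{lem:moment est}) and the spatial quantization error $\eta$ are folded into a \emph{single} application of $\widehat\gamma$, exactly reproducing the left-hand side of \eqref{bisim_cond}, rather than surfacing as a sum of two $\widehat\gamma$ terms. This is precisely what forces me to route the comparison through $\xi_{x_{\mathsf{q}}p}(\tau)$ before invoking \eqref{supplement}. A secondary delicate point is the contraction step for data that are only $\sigalg_{k\tau}$-measurable, which I would justify by the time-homogeneity of the SDE to reduce to the $\sigalg_0$-measurable hypotheses of Theorem \ref{the_Lya}.
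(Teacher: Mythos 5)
Your proposal is correct and follows essentially the same route as the paper's own proof: the identical relation $R$ defined via $\EE[V(x_\tau,x_{\mathsf{q}})]\leq\underline\alpha(\varepsilon^q)$, condition (i) via convexity of $\underline\alpha$ and Jensen, and condition (ii) via the same decomposition through the intermediate stochastic trajectory $\xi_{x_{\mathsf{q}}p}(\tau)$, with the contraction bound from the common Lyapunov function plus a single application of $\widehat\gamma$ to the combined error $(h_{[X_{\tau0}]_\eta}(\tau))^{1/q}+\eta$ from Lemma \ref{lem:moment est} and the quantization. If anything, you are slightly more explicit than the paper on the reverse simulation direction and the matching of initial states in both directions, which the paper dispatches with ``in a similar way.''
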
 

It can be readily seen that when we are interested in the dynamics of $\Sigma$ on a compact $\mathsf{D}\subset\R^n$ of the form of a finite union of boxes, implying that $X_{\tau0}=\mathsf{D}$, and for a given precision $\varepsilon$,
there always exists a sufficiently large value of $\tau$ and a small value of $\eta$ such that $\eta\leq\boxspan(\mathsf{D})$ and the conditions in (\ref{bisim_cond1}) and (\ref{bisim_cond}) are satisfied. For a given fixed sampling time $\tau$, the precision $\varepsilon$ is lower bounded by:
\begin{small}
\begin{equation}\label{lower_bound}
\varepsilon>\left(\ul\alpha^{-1}\left(\frac{\widehat\gamma\left(\left(h_{[X_{\tau0}]_\eta}(\tau)\right)^{\frac{1}{q}}\right)}{1-\mathsf{e}^{-\kappa\tau}}\right)\right)^{\frac{1}{q}}.
\end{equation}
\end{small} 
One can easily verify that the lower bound on $\varepsilon$ in (\ref{lower_bound}) goes to zero as $\tau$ goes to infinity or as $Z_p \ra 0$, for any $p\in\mathsf{P}$, where $Z_p$ is the Lipschitz constant introduced in Definition \ref{Def_control_sys}. 

Note that $S_\params(\Sigma)$ has a countable number of states and it is finite if one is interested in the dynamics of $\Sigma$ on a compact $\mathsf{D}\subset\R^n$ which is always the case in practice.

\begin{proof}
We start by proving \mbox{$S_{\tau}(\Sigma)\preceq^{\varepsilon}_\mathcal{S}S_{\params}(\Sigma)$}.
Consider the relation $R\subseteq X_{\tau}\times X_{\params}$ defined by 
{$\left(x_{\tau},x_{\params}\right)\in R$}
if and only if 
{$\mathbb{E}\left[V\left(H_{\tau}(x_{\tau}),H_{\params}(x_{\params})\right)\right]=\mathbb{E}\left[V\left(x_{\tau},x_{\params}\right)\right]\leq\underline\alpha\left(\varepsilon^q\right)$}. Consider any \mbox{$\left(x_{\tau},x_{\params}\right)\in R$}. Condition (i) in Definition \ref{APSR} is satisfied because
\begin{small}
\begin{equation} 
\label{convexity1}
\left(\mathbb{E}\left[\Vert x_{\tau}-x_{\params}\Vert^q\right]\right)^{\frac{1}{q}}\leq\left(\underline\alpha^{-1}\left(\mathbb{E}\left[V(x_{\tau},x_{\params})\right]\right)\right)^{\frac{1}{q}}\leq\varepsilon.
\end{equation}
\end{small}
We used the convexity assumption of $\underline\alpha$ and the Jensen inequality \cite{oksendal} to show the inequalities in (\ref{convexity1}). Let us now show that condition (ii) in Definition
\ref{APSR} holds. 
Consider the transition \mbox{$x_{\tau}\rTo^{p}_{\tau} x'_{\tau}=\xi_{x_{\tau}p}(\tau)$} $\PP$-a.s. in $S_{\tau}(\Sigma)$. Since $V$ is a common Lyapunov function for $\Sigma$, we have
\begin{small}
\begin{align}\label{b02}
\mathbb{E}\left[V(x'_{\tau},\xi_{x_{\params}p}(\tau))\right] &\leq \EE\left[V(x_\tau,x_q)\right] \mathsf{e}^{-\kappa\tau}\leq \underline\alpha\left(\varepsilon^q\right) \mathsf{e}^{-\kappa\tau}.
\end{align}
\end{small}
Since \mbox{$\R^n\subseteq\bigcup_{p\in[\mathbb{R}^n]_{\eta}}\mathcal{B}_{\eta}(p)$}, there exists \mbox{$x'_{\params}\in{X}_{\params}$} such that 
\begin{equation}
\left\Vert\ol{\xi}_{x_{\params}p}(\tau)-x'_{\params}\right\Vert\leq\eta, \label{b04}
\end{equation}
which, by the definition of $S_\params(\Sigma)$, implies the existence of $x_{\params}\rTo^{p}_{\params}x'_{\params}$ in $S_{\params}(\Sigma)$. 
Using Lemmas \ref{lem:moment est}, the concavity of $\widehat\gamma$, the Jensen inequality \cite{oksendal}, the inequalities (\ref{supplement}), (\ref{bisim_cond}), (\ref{b02}), (\ref{b04}), and triangle inequality, we obtain
\begin{small}
\begin{align*}
\mathbb{E}\left[V(x'_{\tau},x'_{\params})\right]&=\mathbb{E}\left[V(x'_\tau,\xi_{x_{\params}p}(\tau))+V(x'_{\tau},x'_\params)-V(x'_\tau,\xi_{x_{\params}p}(\tau))\right]=  \mathbb{E}\left[V(x'_{\tau},\xi_{x_{\params}p}(\tau))\right]+\mathbb{E}\left[V(x'_{\tau},x'_\params)-V(x'_\tau,\xi_{x_{\params}p}(\tau))\right]\\\notag&\leq\underline\alpha\left(\varepsilon^q\right)\mathsf{e}^{-\kappa\tau}+\mathbb{E}\left[\widehat\gamma\left(\left\Vert\xi_{x_{\params}p}(\tau)-x'_{\params}\right\Vert\right)\right]\leq\underline\alpha\left(\varepsilon^q\right)\mathsf{e}^{-\kappa\tau}+\widehat\gamma\left(\mathbb{E}\left[\left\Vert\xi_{x_{\params}p}(\tau)-\ol{\xi}_{x_{\params}p}(\tau)+\ol{\xi}_{x_{\params}p}(\tau)-x'_{\params}\right\Vert\right]\right)
\\\notag&\leq\underline\alpha\left(\varepsilon^q\right)\mathsf{e}^{-\kappa\tau}+\widehat\gamma\left(\mathbb{E}\left[\left\Vert\xi_{x_{\params}p}(\tau)-\ol{\xi}_{x_{\params}p}(\tau)\right\Vert\right]+\left\Vert\ol{\xi}_{x_{\params}p}(\tau)-x'_{\params}\right\Vert\right)\\\notag&\leq\underline\alpha\left(\varepsilon^q\right)\mathsf{e}^{-\kappa\tau}+\widehat\gamma\left(\left(h_{[X_{\tau0}]_\eta}(\tau)\right)^{\frac{1}{q}}+\eta\right)\leq\underline\alpha\left(\varepsilon^q\right).
\end{align*}
\end{small}Therefore, we conclude that $\left(x'_{\tau},x'_{\params}\right)\in{R}$ and that condition (ii) in Definition \ref{APSR} holds. Since $X_{\tau 0}\subseteq\bigcup_{p\in[\mathbb{R}^n]_{\eta}}\mathcal{B}_{\eta}(p)$, 
for every $x_{\tau 0}\in{X_{\tau 0}}$ there always exists \mbox{$x_{\params 0}\in{X}_{\params 0}$} such that $\Vert{x_{\tau0}}-x_{\params0}\Vert\leq\eta.$ Then,
\begin{small}
\begin{align}\nonumber
\mathbb{E}\left[V({x_{\tau0}},x_{\params0})\right]=V({x_{\tau0}},x_{\params0})&\leq\overline\alpha\left(\Vert x_{\tau0}-x_{\params0}\Vert^q\right)\leq\overline\alpha\left(\eta^q\right)\leq\underline\alpha\left(\varepsilon^q\right),
\end{align}
\end{small}because of (\ref{bisim_cond1}) and since $\overline\alpha$ is a $\mathcal{K}_\infty$ function.
Hence, \mbox{$\left(x_{\tau0},x_{\params0}\right)\in{R}$} implying that \mbox{$S_{\tau}(\Sigma)\preceq^{\varepsilon}_\mathcal{S}S_{\params}(\Sigma)$}. In a similar way, we can prove that \mbox{$S_{\params}(\Sigma)\preceq^{\varepsilon}_{\mathcal{S}}S_{\tau}(\Sigma)$} by showing that $R^{-1}$ is an $\varepsilon$-approximate simulation relation from $S_\params(\Sigma)$ to $S_\tau(\Sigma)$. 
\end{proof}

Note that the results in \cite[Theorem 4.1]{girard2} for non-probabilistic models are fully recovered by the statement in Theorem \ref{main_theorem1} if the stochastic switched system $\Sigma$ is not affected by any noise, 
implying that $h_x^p(t)$ is identically zero for all $p\in\mathsf{P}$ and all $x\in\R^n$, 
and that the $\delta$-GAS-M$_q$ common Lyapunov function simply reduces to being $\delta$-GAS one. 


\subsubsection{Multiple Lyapunov functions}
If a common $\delta$-GAS-M$_q$  Lyapunov function does not exist or cannot be practically found, 
one can still attempt computing approximately bisimilar symbolic models by seeking mode-dependent Lyapunov functions and by restricting the set of switching signals using a condition on the dwell time $\tau_d=\widehat{N}\tau$ for some $\widehat{N}\in\N$. 

Consider a stochastic switched system $\Sigma_{\tau_d}$ and a pair $\mathsf{q}=(\tau,\eta)$ of quantization parameters, where $\tau$ is the sampling time and $\eta$ is the state space quantization. 
Given $\Sigma_{\tau_d}$ and $\mathsf{q}$, consider the following system:
\begin{small}
\begin{equation}\label{T3}
S_{\mathsf{q}}\left(\Sigma_{\tau_d}\right)=(X_{\mathsf{q}},X_{\params0},U_{\mathsf{q}},\rTo_{\mathsf{q}},Y_{\mathsf{q}},H_{\mathsf{q}}),
\end{equation}
\end{small}where $X_{\mathsf{q}}=[\R^n]_\eta\times\mathsf{P}\times\left\{0,\ldots,\widehat{N}-1\right\}$, $X_{\params0}=[\R^n]_\eta\times\mathsf{P}\times\left\{0\right\}$, $U_{\mathsf{q}}=\mathsf{P}$, $Y_{\mathsf{q}}=Y_\tau$, and
\begin{itemize}
\item $\left(x_{\mathsf{q}},p,i\right)\rTo_{\mathsf{q}}^{p}\left(x'_{\mathsf{q}},p',i'\right)$ if there exists a $x'_\params\in X_\params$ such that $\left\Vert\ol{\xi}_{x_{\mathsf{q}}p}(\tau)-x'_{\mathsf{q}}\right\Vert\leq\eta$ and one of the following holds:

\begin{itemize}
\item $i<\widehat{N}-1$, $p'=p$, and $i'=i+1$;

\item $i=\widehat{N}-1$, $p'=p$, and $i'=\widehat{N}-1$;

\item $i=\widehat{N}-1$, $p'\neq p$, and $i'=0$.
\end{itemize}


\item $H_{\mathsf{q}}(x_\mathsf{q},p,i)=x_\mathsf{q}$ for any $(x_\mathsf{q},p,i)\in[\R^n]_\eta\times\mathsf{P}\times\left\{0,\ldots,\widehat{N}-1\right\}$.
\end{itemize}


We present the second main result of this subsection, 
which relates the existence of multiple Lyapunov functions for a stochastic switched system to that of a symbolic model, based on the state space discretization. In order to show the next result, we assume that $f_p(0_n)=0_n$ only if $\Sigma_p$ is not affine and that $g_p(0_n)=0_{n\times\widehat{q}}$ for any $p\in\mathsf{P}$.
\begin{theorem}\label{main_theorem2}
Consider a stochastic switched system $\Sigma_{\tau_d}$. Let us assume that for any $p\in\mathsf{P}$, there exists a $\delta$-GAS-M$_q$ Lyapunov function $V_p$, of the form explained in Lemma \ref{lem:moment est}, for subsystem $\Sigma_{\tau_d,p}$. Moreover, assume that (\ref{eq0}) holds for some $\mu\geq1$. If $\tau_d>\log{\mu}/\kappa$, for $X_{\tau0}=X_0\times\mathsf{P}\times\left\{0,\ldots,\widehat{N}\right\}$, where $X_0=\R^n$, any $\varepsilon\in\R^+$, 
and any pair $\mathsf{q}=(\tau,\eta)$ of quantization parameters satisfying
\begin{small}
\begin{align}\label{bisim_cond_mul1}
\overline\alpha\left(\eta^q\right)&\leq\underline\alpha\left(\varepsilon^q\right),\\\label{bisim_cond_mul}
\widehat\gamma\left(\left(h_{[X_0]_\eta}(\tau)\right)^{\frac{1}{q}}+\eta\right)&\leq\frac{\frac{1}{\mu}-\mathsf{e}^{-\kappa\tau_d}}{1-\mathsf{e}^{-\kappa\tau_d}}\left(1-\mathsf{e}^{-\kappa\tau}\right)\underline\alpha\left(\varepsilon^q\right),
\end{align}
\end{small}we have that \mbox{$S_{\mathsf{q}}\left(\Sigma_{\tau_d}\right)\cong_{\mathcal{S}}^{\varepsilon}S_{\tau}\left(\Sigma_{\tau_d}\right)$}.
\end{theorem}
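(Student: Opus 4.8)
The plan is to exhibit an explicit $\varepsilon$-approximate bisimulation relation between $S_{\params}(\Sigma_{\tau_d})$ and $S_\tau(\Sigma_{\tau_d})$, following the template of the proof of Theorem~\ref{main_theorem1} but encoding the dwell-time bookkeeping through a \emph{position-dependent} family of sublevel bounds. Write $\Lambda \Let \widehat\gamma\big((h_{[X_0]_\eta}(\tau))^{1/q}+\eta\big)$ for the one-step quantization error, and for $i\in\{0,\ldots,\widehat N-1\}$ set
\[
b_i \Let \mathsf{e}^{-i\kappa\tau}\,\ul\alpha(\varepsilon^q) + \Lambda\,\frac{1-\mathsf{e}^{-i\kappa\tau}}{1-\mathsf{e}^{-\kappa\tau}},
\]
that is, the orbit of the affine map $b\mapsto \mathsf{e}^{-\kappa\tau}b+\Lambda$ started at $b_0=\ul\alpha(\varepsilon^q)$. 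I would then define $R\subseteq X_\tau\times X_\params$ by declaring $\big((x_\tau,p,i),(x_\params,p',i')\big)\in R$ iff $p=p'$, $i=i'$, and $\EE[V_p(x_\tau,x_\params)]\le b_i$. The intuition is that immediately after a switch the relevant Lyapunov value may be inflated by the factor $\mu$ of \eqref{eq0}, so the tolerated level is largest at $i=0$ and must decay geometrically across the dwell interval; condition \eqref{bisim_cond_mul} is precisely what guarantees it has decayed enough by step $\widehat N$ to absorb the next multiplication by $\mu$.

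For condition (i) of Definition~\ref{APSR}, on any related pair Jensen's inequality together with convexity of $\ul\alpha$ and property (ii) of Definition~\ref{delta_SGAS_Lya} give $\ul\alpha(\EE[\Vert x_\tau-x_\params\Vert^q])\le\EE[V_p(x_\tau,x_\params)]\le b_i\le\ul\alpha(\varepsilon^q)$, hence $\mathbf d(H_\tau(\cdot),H_\params(\cdot))\le\varepsilon$; here one uses that $b_i$ is non-increasing in $i$, which follows from $\Lambda<(1-\mathsf{e}^{-\kappa\tau})\ul\alpha(\varepsilon^q)$ (a consequence of \eqref{bisim_cond_mul}, since the coefficient $\tfrac{1/\mu-\mathsf{e}^{-\kappa\tau_d}}{1-\mathsf{e}^{-\kappa\tau_d}}\le 1$). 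For condition (ii), given a transition $(x_\tau,p,i)\rTo^p_\tau(x'_\tau,p',i')$ with $x'_\tau=\xi_{x_\tau p}(\tau)$, I would reuse almost verbatim the estimate from the proof of Theorem~\ref{main_theorem1}: the mode-wise $\kappa$-decay of $V_p$ (the inequality \eqref{eq1} from the proof of Theorem~\ref{multiple_lyapunov}) yields $\EE[V_p(x'_\tau,\xi_{x_\params p}(\tau))]\le b_i\mathsf{e}^{-\kappa\tau}$, and choosing $x'_\params\in[\R^n]_\eta$ with $\Vert\ol\xi_{x_\params p}(\tau)-x'_\params\Vert\le\eta$ (which defines the matching transition in $S_\params(\Sigma_{\tau_d})$ with the \emph{same} pair $(p',i')$, as both systems share the mode/counter update rules), Lemma~\ref{lem:moment est}, \eqref{supplement}, concavity of $\widehat\gamma$ and the triangle inequality give $\EE[V_p(x'_\tau,x'_\params)]\le b_i\mathsf{e}^{-\kappa\tau}+\Lambda$.

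It then remains to check, case by case, that this right-hand side lies below the level tolerated at the target $(p',i')$. When $i<\widehat N-1$ (no switch) it equals $b_i\mathsf{e}^{-\kappa\tau}+\Lambda=b_{i+1}$ by construction; when $i=\widehat N-1$ and no switch occurs it is $b_{\widehat N-1}\mathsf{e}^{-\kappa\tau}+\Lambda\le b_{\widehat N-1}$ by monotonicity; and when $i=\widehat N-1$ and a switch to $p'\ne p$ occurs, \eqref{eq0} gives $\EE[V_{p'}(x'_\tau,x'_\params)]\le\mu(b_{\widehat N-1}\mathsf{e}^{-\kappa\tau}+\Lambda)$, and telescoping the geometric decay of $b_i$ over exactly $\widehat N$ steps together with $\tau_d=\widehat N\tau$ reduces the requirement $\mu(b_{\widehat N-1}\mathsf{e}^{-\kappa\tau}+\Lambda)\le b_0$ precisely to \eqref{bisim_cond_mul} (this is where $\tau_d>\log\mu/\kappa$, equivalently $\mu\mathsf{e}^{-\kappa\tau_d}<1$, is needed for the threshold to be positive). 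For the initial states, \eqref{bisim_cond_mul1} gives $V_p(x_{\tau0},x_{\params0})\le\ol\alpha(\eta^q)\le\ul\alpha(\varepsilon^q)=b_0$ after picking $x_{\params0}\in[\R^n]_\eta$ within $\eta$ of $x_{\tau0}$, so related initial states exist; the identical per-step estimate shows $R^{-1}$ is an $\varepsilon$-approximate simulation from $S_\params(\Sigma_{\tau_d})$ to $S_\tau(\Sigma_{\tau_d})$, completing the bisimulation. I expect the only genuinely delicate point to be the bookkeeping in the switching case: verifying that the telescoped decay of the $b_i$ across $\widehat N$ steps, followed by multiplication by $\mu$, collapses exactly to the closed-form threshold \eqref{bisim_cond_mul}; everything else is a direct transcription of the common-Lyapunov argument.
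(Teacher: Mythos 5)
Your proposal is correct and follows essentially the same route as the paper's own proof: your levels $b_i$ are exactly the paper's recursively defined $\delta_i$, your relation coincides with the paper's, and the three-case analysis (including the telescoping of the decay over $\widehat N$ steps to reduce the switching case to \eqref{bisim_cond_mul}) matches the paper's argument step for step. The only cosmetic difference is that the paper states the recursion first and derives the closed form, while you start from the closed form; nothing substantive changes.
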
 

It can be readily seen that when we are interested in the dynamics of $\Sigma_{\tau_d}$ on a compact $\mathsf{D}\subset\R^n$ of the form of a finite union of boxes, implying that $X_0=\mathsf{D}$, and for a precision $\varepsilon$, there always exists a sufficiently large value of $\tau$ and a small value of $\eta$, such that $\eta\leq\boxspan(\mathsf{D})$ and the conditions in (\ref{bisim_cond_mul1}) and (\ref{bisim_cond_mul}) are satisfied. For a given fixed sampling time $\tau$, the precision $\varepsilon$ is lower bounded by 
\begin{small}
\begin{equation}\label{lower_bound_mul}
\varepsilon\geq\left(\ul\alpha^{-1}\left(\frac{\widehat\gamma\left(\left(h_{[X_0]_\eta}(\tau)\right)^{\frac{1}{q}}\right)}{1-\mathsf{e}^{-\kappa\tau}}\cdot\frac{1-\mathsf{e}^{-\kappa\tau_d}}{\frac{1}{\mu}-\mathsf{e}^{-\kappa\tau_d}}\right)\right)^{\frac{1}{q}}.
\end{equation}
\end{small} 
The properties of the bound in (\ref{lower_bound_mul}) are analogous to those of the case of a common Lyapunov function. 

Note that $S_{\mathsf{q}}\left(\Sigma_{\tau_d}\right)$ has a countable number of states and it is finite if one is interested in the dynamics of $\Sigma_{\tau_d}$ on a compact $\mathsf{D}\subset\R^n$ which is always the case in practice.

\begin{proof}
The proof was inspired by the proof of Theorem 4.2 in \cite{girard2} for non-probabilistic switched systems. We start by proving \mbox{$S_{\tau}\left(\Sigma_{\tau_d}\right)\preceq^{\varepsilon}_\mathcal{S}S_{\params}\left(\Sigma_{\tau_d}\right)$}.
Consider the relation $R\subseteq X_{\tau}\times X_{\params}$ defined by 
{$\left(x_{\tau},p_1,i_1,x_{\params},p_2,i_2\right)\in R$}
if and only if $p_1=p_2=p$, $i_1=i_2=i$, and $\mathbb{E}\left[V_p\left(H_{\tau}(x_{\tau},p_1,i_1),H_{\params}(x_{\params},p_2,i_2)\right)\right]=\mathbb{E}\left[V_p\left(x_{\tau},x_{\params}\right)\right]\leq\delta_i$, where $\delta_0,\ldots,\delta_{\widehat{N}}$ are given recursively by\begin{small}$$\delta_0=\ul\alpha\left(\varepsilon^q\right),~~\delta_{i+1}=\mathsf{e}^{-\kappa\tau}\delta_i+\widehat\gamma\left(\left(h_{[X_0]_\eta}(\tau)\right)^{\frac{1}{q}}+\eta\right).$$\end{small} One can easily verify that
\begin{small}
\begin{align}\nonumber
\delta_i=&\mathsf{e}^{-i\kappa\tau}\ul\alpha\left(\varepsilon^q\right)+\widehat\gamma\left(\left(h_{[X_0]_\eta}(\tau)\right)^{\frac{1}{q}}+\eta\right)\frac{1-\mathsf{e}^{-i\kappa\tau}}{1-\mathsf{e}^{-\kappa\tau}}\\\label{delta_i}=&\frac{\widehat\gamma\left(\left(h_{[X_0]_\eta}(\tau)\right)^{\frac{1}{q}}+\eta\right)}{1-\mathsf{e}^{-\kappa\tau}}+\mathsf{e}^{-i\kappa\tau}\left(\ul\alpha\left(\varepsilon^q\right)-\frac{\widehat\gamma\left(\left(h_{[X_0]_\eta}(\tau)\right)^{\frac{1}{q}}+\eta\right)}{1-\mathsf{e}^{-\kappa\tau}}\right).
\end{align}
\end{small}
Since $\mu\geq1$, and from (\ref{bisim_cond_mul}), one has \begin{small}$$\widehat\gamma\left(\left(h_{[X_0]_\eta}(\tau)\right)^{\frac{1}{q}}+\eta\right)\leq(1-\mathsf{e}^{-\kappa\tau})\ul\alpha\left(\varepsilon^q\right).$$\end{small} It follows from (\ref{delta_i}) that $\delta_0\geq\delta_1\geq\cdots\geq\delta_{\widehat{N}-1}\geq\delta_{\widehat{N}}$. From (\ref{bisim_cond_mul}) and since $\tau_d=\widehat{N}\tau$, we get 
\begin{small}
\begin{align}\label{Ntozero}
\delta_{\widehat{N}}=&\mathsf{e}^{-\kappa\tau_d}\ul\alpha\left(\varepsilon^q\right)+\widehat\gamma\left(\left(h_{[X_0]_\eta}(\tau)\right)^{\frac{1}{q}}+\eta\right)\frac{1-\mathsf{e}^{-\kappa\tau_d}}{1-\mathsf{e}^{-\kappa\tau}}\leq\mathsf{e}^{-\kappa\tau_d}\ul\alpha\left(\varepsilon^q\right)+\left(\frac{1}{\mu}-\mathsf{e}^{-\kappa\tau_d}\right)\ul\alpha\left(\varepsilon^q\right)=\frac{\ul\alpha\left(\varepsilon^q\right)}{\mu}.
\end{align}
\end{small}
We can now prove that $R$ is an $\varepsilon$-approximate simulation relation from $S_{\tau}\left(\Sigma_{\tau_d}\right)$ to $S_{\params}\left(\Sigma_{\tau_d}\right)$.
Consider any \mbox{$\left(x_{\tau},p,i,x_{\params},p,i\right)\in R$}. Using the convexity assumption of $\underline\alpha_p$, and since it is a $\mathcal{K}_\infty$ function, and the Jensen inequality \cite{oksendal}, we have:
\begin{small}
\begin{align}\nonumber
&\ul\alpha\left(\mathbb{E}\left[\Vert H_\tau(x_{\tau},p,i)-H_\params(x_{\params},p,i)\Vert^q\right]\right)=\ul\alpha\left(\mathbb{E}\left[\Vert x_{\tau}-x_{\params}\Vert^q\right]\right)\\\notag&\leq\ul\alpha_p\left(\mathbb{E}\left[\Vert x_{\tau}-x_{\params}\Vert^q\right]\right)\leq\EE\left[\ul\alpha_p\left(\Vert x_{\tau}-x_{\params}\Vert^q\right)\right]\leq\mathbb{E}\left[V_p(x_{\tau},x_{\params})\right]\leq\delta_i\leq\delta_0.
\end{align}
\end{small}Therefore, we obtain
\begin{small}
$\left(\mathbb{E}\left[\Vert x_{\tau}-x_{\params}\Vert^q\right]\right)^{\frac{1}{q}}\leq\left(\underline\alpha^{-1}\left(\delta_0\right)\right)^{\frac{1}{q}}\leq\varepsilon$,
\end{small}because of $\ul\alpha\in\mathcal{K}_\infty$. Hence, condition (i) in Definition \ref{APSR} is satisfied.
Let us now show that condition (ii) in Definition
\ref{APSR} holds. 
Consider the transition $(x_{\tau},p,i)\rTo^{p}_{\tau} (x'_{\tau},p',i')$ in $S_{\tau}\left(\Sigma_{\tau_d}\right)$, where $x'_{\tau}=\xi_{x_{\tau}p}(\tau)$ $\PP$-a.s.. Since $V_p$ is a $\delta$-GAS-M$_q$ Lyapunov function for subsystem $\Sigma_p$, we have
\begin{small}
\begin{align}\label{b06}
\mathbb{E}\left[V_p(x'_{\tau},\xi_{x_{\params}p}(\tau))\right] &\leq \EE\left[V_p(x_\tau,x_q)\right] \mathsf{e}^{-\kappa\tau}\leq\mathsf{e}^{-\kappa\tau}\delta_i.
\end{align}
\end{small}
Since \mbox{$\R^n\subseteq\bigcup_{p\in[\mathbb{R}^n]_{\eta}}\mathcal{B}_{\eta}(p)$}, there exists \mbox{$x'_{\params}\in[\mathbb{R}^n]_{\eta}$} such that 
\begin{small}
\begin{equation}
\left\Vert\ol{\xi}_{x_{\params}p}(\tau)-x'_{\params}\right\Vert\leq\eta. \label{b07}
\end{equation}
\end{small}Using Lemmas \ref{lem:moment est}, the $\mathcal{K}_\infty$ function $\widehat\gamma$, the concavity of $\widehat\gamma_p$ in \eqref{supplement}, the Jensen inequality \cite{oksendal}, the inequalities (\ref{supplement}), (\ref{b06}), (\ref{b07}), and triangle inequality, we obtain
\begin{small}
\begin{align}\nonumber
\mathbb{E}\left[V_p(x'_{\tau},x'_{\params})\right]&=\mathbb{E}\left[V_p(x'_\tau,\xi_{x_{\params}p}(\tau))+V_p(x'_{\tau},x'_\params)-V_p(x'_\tau,\xi_{x_{\params}p}(\tau))\right]=  \mathbb{E}\left[V_p(x'_{\tau},\xi_{x_{\params}p}(\tau))\right]+\mathbb{E}\left[V_p(x'_{\tau},x'_\params)-V_p(x'_\tau,\xi_{x_{\params}p}(\tau))\right]\\\notag&\leq\mathsf{e}^{-\kappa\tau}\delta_i+\mathbb{E}\left[\widehat\gamma_p\left(\left\Vert\xi_{x_{\params}p}(\tau)-x'_{\params}\right\Vert\right)\right]\leq\mathsf{e}^{-\kappa\tau}\delta_i+\widehat\gamma_p\left(\mathbb{E}\left[\left\Vert\xi_{x_{\params}p}(\tau)-x'_{\params}\right\Vert\right]\right)\\\label{b08}
&\leq\mathsf{e}^{-\kappa\tau}\delta_i+\widehat\gamma\left(\mathbb{E}\left[\left\Vert\xi_{x_{\params}p}(\tau)-\ol{\xi}_{x_{\params}p}(\tau)\right\Vert\right]+\left\Vert\ol{\xi}_{x_{\params}p}(\tau)-x'_{\params}\right\Vert\right)\leq\mathsf{e}^{-\kappa\tau}\delta_i+\widehat\gamma\left(\left(h_{[X_0]_\eta}(\tau)\right)^{\frac{1}{q}}+\eta\right)=\delta_{i+1}.
\end{align}
\end{small}
We now examine three separate cases:
\begin{itemize}
\item If $i<\widehat{N}-1$, then $p'=p$, and $i'=i+1$; since, from (\ref{b08}), $\EE\left[V_p(x'_{\tau},x'_{\params})\right]\leq\delta_{i+1}$, we conclude that $(x'_{\tau},p,i+1,x'_{\params},p,i+1)\in R$; 

\item If $i=\widehat{N}-1$, and $p'=p$, then $i'=\widehat{N}-1$; from (\ref{b08}), $\EE\left[V_p(x'_{\tau},x'_{\params})\right]\leq\delta_{\widehat{N}}\leq\delta_{\widehat{N}-1}$, we conclude that $(x'_{\tau},p,\widehat{N}-1,x'_{\params},p,\widehat{N}-1)\in R$; 

\item If $i=\widehat{N}-1$, and $p'\neq{p}$, then $i'=0$; from (\ref{Ntozero}) and (\ref{b08}), $\EE\left[V_p(x'_{\tau},x'_{\params})\right]\leq\delta_{\widehat{N}}\leq\delta_0/\mu$. From (\ref{eq0}), it follows that $\EE\left[V_{p'}(x'_{\tau},x'_{\params})\right]\leq\mu \EE\left[V_p(x'_{\tau},x'_{\params})\right]\leq\delta_0$. Hence, $(x'_{\tau},p',0,x'_{\params},p',0)\in R$.
\end{itemize}
Therefore, we conclude that condition (ii) in Definition \ref{APSR} holds. Since $X_0\subseteq\bigcup_{p\in[\mathbb{R}^n]_{\eta}}\mathcal{B}_{\eta}(p)$, for every $\left(x_{\tau 0},p,0\right)\in X_{\tau0}$ there always exists $\left(x_{\params0},p,0\right)\in{X}_{\params0}$ such that $\Vert{x_{\tau0}}-x_{\params0}\Vert\leq\eta$. Then,
\begin{small}
\begin{align}\nonumber
&\mathbb{E}\left[V_p(H_\tau(x_{\tau0},p,0),H_{\mathsf{q}}(x_{\params0},p,0)\right]=V_p({x_{\tau0}},x_{\params0})\leq\ol\alpha_p\left(\left\| x_{\tau0}-x_{\params0}\right\|^q\right)\leq\overline\alpha\left(\Vert x_{\tau0}-x_{\params0}\Vert^q\right)\leq\overline\alpha\left(\eta^q\right)\leq\underline\alpha\left(\varepsilon^q\right),
\end{align}
\end{small}because of (\ref{bisim_cond_mul1}) and since $\overline\alpha$ is a $\mathcal{K}_\infty$ function.
Hence, $V_p(x_{\tau0},x_{\params0})\leq\delta_0$ and \mbox{$\left(x_{\tau0},p,0,x_{\params0},p,0\right)\in{R}$} implying that \mbox{$S_{\tau}(\Sigma_{\tau_d})\preceq^{\varepsilon}_\mathcal{S}S_{\params}(\Sigma_{\tau_d})$}. In a similar way, we can prove that \mbox{$S_{\params}\left(\Sigma_{\tau_d}\right)\preceq^{\varepsilon}_{\mathcal{S}}S_{\tau}\left(\Sigma_{\tau_d}\right)$} by showing that $R^{-1}$ is an $\varepsilon$-approximate simulation relation from $S_\params(\Sigma_{\tau_d})$ to $S_\tau(\Sigma_{\tau_d})$. 
\end{proof}

As before, Theorem \ref{main_theorem2} 
subsumes \cite[Theorem 4.2]{girard2} over non-probabilistic models. 

\subsection{Second approach}
This subsection contains the second main results of the paper providing bisimilar symbolic models without any space discretization.

\subsubsection{Common Lyapunov function}
First, we show one of the main results of this subsection on the construction of symbolic models based on the existence of a common $\delta$-GAS-M$_q$ Lyapunov function. 
We proceed by introducing two fully symbolic systems for the concrete one $\Sigma$.
Consider a stochastic switched system $\Sigma$ and a triple $\ol{\mathsf{q}}=\left(\tau,N,x_s\right)$ of parameters, where $\tau$ is the sampling time, $N\in\N$ is a \emph{temporal horizon}, and $x_s\in\R^n$ is a \emph{source state}.
Given $\Sigma$ and $\ol{\mathsf{q}}$, consider the following systems:
\begin{small}
\begin{align}\notag
S_{\ol{\mathsf{q}}}(\Sigma)&=(X_{\ol{\mathsf{q}}},X_{\ol{\mathsf{q}}0},U_{\ol{\mathsf{q}}},\rTo_{\ol{\mathsf{q}}},Y_{\ol{\mathsf{q}}},H_{\ol{\mathsf{q}}}),\\\notag\ol{S}_{\ol{\mathsf{q}}}(\Sigma)&=(X_{\ol{\mathsf{q}}},X_{\ol{\mathsf{q}}0},U_{\ol{\mathsf{q}}},\rTo_{\ol{\mathsf{q}}},Y_{\ol{\mathsf{q}}},\ol{H}_{\ol{\mathsf{q}}}),
\end{align}
\end{small}where $X_{\ol{\mathsf{q}}}=\mathsf{P}^N$, $X_{\ol{\params}0}=X_{\ol{\mathsf{q}}}$, $U_{\ol{\mathsf{q}}}=\mathsf{P}$, $Y_{\ol{\mathsf{q}}}=Y_{\tau}$, and
\begin{itemize}
\item $x_{\ol{\params}}\rTo_{\ol{\mathsf{q}}}^{p}x'_{\ol{\mathsf{q}}}$, where $x_{\ol{\params}}=(p_1,p_2,\ldots,p_N)$, if and only if $x'_{\ol{\params}}=(p_2,\ldots,p_N,p)$;
\item $H_{\ol{\mathsf{q}}}(x_{\ol{\params}})=\xi_{x_sx_{\ol{\params}}}(N\tau)$ $\left(\ol{H}_{\ol{\mathsf{q}}}(x_{\ol{\params}})=\ol\xi_{x_sx_{\ol{\params}}}(N\tau)\right)$.
\end{itemize}

Note that we have abused notation by identifying $x_{\ol{\params}}=(p_1,p_2,\ldots,p_N)$ with a switching signal obtained by the concatenation of modes $p_i$ \big(i.e. $x_{\ol{\params}}(t)=p_i$ for any $t\in[(i-1)\tau,i\tau[$\big) for $i=1,\ldots,N$. Notice that the proposed system $S_{\ol{\params}}(\Sigma)$ $\left(\text{resp.}~\ol{S}_{\ol{\params}}(\Sigma)\right)$ is symbolic and deterministic  in the sense of Definition \ref{system}. Note that $H_{\ol{\params}}$ and $\ol{H}_{\ol{\params}}$ are mappings from a non-probabilistic point $x_{\ol{\params}}$ to the random variable $\xi_{x_sx_{\ol{\params}}}(N\tau)$ and to the one with a Dirac probability distribution centered at $\ol\xi_{x_sx_{\ol{\params}}}(N\tau)$, respectively. One can readily verify that the transition relation of $S_{\ol{\mathsf{q}}}(\Sigma)$ (resp. $\ol{S}_{\ol{\mathsf{q}}}(\Sigma)$) admits a very compact representation under the form of a shift operator and such symbolic systems do not require any continuous space discretization.

Before providing the main results, we need the following technical lemmas.

\begin{lemma}\label{lemma1}
Consider a stochastic switched system $\Sigma$, admitting a common $\delta$-GAS-M$_q$ Lyapunov function $V$, and consider its corresponding symbolic model $\ol{S}_{\ol{\params}}(\Sigma)$. We have:
\begin{small}
\begin{align}\label{upper_bound}
\ol\eta\leq&\left(\ul\alpha^{-1}\left(\mathsf{e}^{-\kappa N\tau}\max_{p\in \mathsf{P}}V\left(\ol\xi_{x_sp}(\tau),x_s\right)\right)\right)^{1/q},
\end{align}
\end{small}
where
\begin{small}
\begin{align}\label{eta}
\ol\eta\Let\max_{\substack{p\in\mathsf{P},x_{\ol{\params}}\in X_{\ol{\params}}\\x_{\ol{\params}}\rTo_{\ol{\params}}^{p}x'_{\ol{\params}}}}\left\Vert\ol\xi_{\ol{H}_{\ol{\params}}(x_{\ol{\params}})p}(\tau)-\ol{H}_{\ol{\params}}\left(x'_{\ol{\params}}\right)\right\Vert.
\end{align}
\end{small}
\end{lemma}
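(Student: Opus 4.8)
The plan is to exploit the shift structure of $\ol S_{\ol{\params}}(\Sigma)$ together with the contractivity of the common Lyapunov function $V$ along the deterministic (noise-free) flow. First I would record the non-probabilistic counterpart of Theorem \ref{theorem2}: writing the decrease condition (iii) of Definition \ref{delta_SGAS_Lya} and keeping only its drift part (the trace term being nonnegative thanks to the positive-semidefinite Hessian assumption inherited from Lemma \ref{lem:moment est}), one gets $\frac{d}{dt}V(\ol\xi_{a\upsilon}(t),\ol\xi_{a'\upsilon}(t)) \le -\kappa V(\ol\xi_{a\upsilon}(t),\ol\xi_{a'\upsilon}(t))$ at every continuity point of any switching signal $\upsilon$ and for all $a,a'\in\R^n$; since $V$ evaluated along the continuous trajectories is continuous across switches, Gronwall's inequality yields $V(\ol\xi_{a\upsilon}(N\tau),\ol\xi_{a'\upsilon}(N\tau)) \le V(a,a')\mathsf{e}^{-\kappa N\tau}$.

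Next I would rewrite the two points appearing in $\ol\eta$ as flows of the \emph{same} switching signal from two different initial conditions. Fix a transition $x_{\ol{\params}}=(p_1,\ldots,p_N)\rTo_{\ol{\params}}^{p}x'_{\ol{\params}}=(p_2,\ldots,p_N,p)$. By the semigroup property of the deterministic flow and the autonomy of each ODE $\dot{\ol\xi}=f_p(\ol\xi)$, the point $\ol\xi_{\ol H_{\ol{\params}}(x_{\ol{\params}})p}(\tau)$ obtained by evolving $\ol H_{\ol{\params}}(x_{\ol{\params}})=\ol\xi_{x_s x_{\ol{\params}}}(N\tau)$ under $p$ for one sampling step is the value at time $(N+1)\tau$ of the trajectory from $x_s$ under the concatenated signal $(p_1,\ldots,p_N,p)$; splitting that trajectory at time $\tau$ (after which it follows exactly $x'_{\ol{\params}}$) identifies it with $\ol\xi_{\ol\xi_{x_sp_1}(\tau)\,x'_{\ol{\params}}}(N\tau)$. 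On the other hand $\ol H_{\ol{\params}}(x'_{\ol{\params}})=\ol\xi_{x_s x'_{\ol{\params}}}(N\tau)$. Hence both terms are generated by the signal $x'_{\ol{\params}}$ over the horizon $N\tau$, from the initial conditions $\ol\xi_{x_sp_1}(\tau)$ and $x_s$, respectively.

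I would then combine these two observations. Applying the deterministic contraction with $\upsilon=x'_{\ol{\params}}$, $a=\ol\xi_{x_sp_1}(\tau)$, $a'=x_s$ gives $V(\ol\xi_{\ol H_{\ol{\params}}(x_{\ol{\params}})p}(\tau),\ol H_{\ol{\params}}(x'_{\ol{\params}})) \le V(\ol\xi_{x_sp_1}(\tau),x_s)\mathsf{e}^{-\kappa N\tau}$, and property (ii) of Definition \ref{delta_SGAS_Lya} converts this into the norm estimate $\|\ol\xi_{\ol H_{\ol{\params}}(x_{\ol{\params}})p}(\tau)-\ol H_{\ol{\params}}(x'_{\ol{\params}})\| \le (\ul\alpha^{-1}(V(\ol\xi_{x_sp_1}(\tau),x_s)\mathsf{e}^{-\kappa N\tau}))^{1/q}$. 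Finally, maximizing over all transitions and noting that the right-hand side depends on the transition only through the first mode $p_1\in\mathsf{P}$, I would use the monotonicity of $\ul\alpha^{-1}$ and the positivity of $\mathsf{e}^{-\kappa N\tau}$ to pull the maximum inside, which is precisely \eqref{upper_bound}.

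The only genuinely delicate point is the first step: the common Lyapunov function is tailored to the stochastic generator $\mathcal{L}$, so I must argue that the diffusion (trace) contribution to $\mathcal{L}V$ is nonnegative in order to transfer the $e^{-\kappa t}$ decrease to the noise-free flow $\ol\xi$ entering the definition of $\ol H_{\ol{\params}}$. This is exactly where the positive-semidefinite Hessian hypothesis carried over from Lemma \ref{lem:moment est} is used, since it makes the term $\frac12\Tr(GG^{T}H)=\frac12\Tr(G^{T}HG)$ nonnegative; once this is in place, the remainder is bookkeeping with the semigroup property and the class-$\mathcal{K}_\infty$ estimates.
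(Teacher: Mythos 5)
Your proof is correct and follows essentially the same route as the paper's: rewrite both points as flows of the \emph{same} concatenated signal $x'_{\ol{\params}}$ over horizon $N\tau$ starting from the two initial conditions $\ol\xi_{x_sp_1}(\tau)$ and $x_s$, apply the exponential contraction of $V$ along the noise-free trajectories, and convert to the norm bound via property (ii) of Definition \ref{delta_SGAS_Lya} together with monotonicity of $\ul\alpha^{-1}$, maximizing at the end over $p_1\in\mathsf{P}$. The only difference is that you explicitly justify transferring the decrease condition from $\mathcal{L}V$ to the deterministic flow by the nonnegativity of the trace term under the positive-semidefinite Hessian hypothesis of Lemma \ref{lem:moment est} --- a step the paper leaves implicit by merely citing the manipulations in the proof of Theorem \ref{theorem2} --- so your write-up is, if anything, more careful on this point.
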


\begin{proof}
Let $x_{\ol{\params}}\in X_{\ol{\params}}$, where $x_{\ol{\params}}=\left(p_1,p_2,\ldots,p_N\right)$, and $p\in U_{\ol{\params}}$. Using the definition of $\ol{S}_{\ol{\params}}(\Sigma)$, one obtains $x_{\ol{\params}}\rTo_{\ol{\params}}^{p}x'_{\ol{\params}}$, where $x'_{\ol{\params}}=\left(p_2,\ldots,p_N,p\right)$. Since $V$ is a common $\delta$-GAS-M$_q$ Lyapunov function for $\Sigma$ and based on the manipulations in the proof of Theorem \ref{theorem2}, we have:
\begin{small}
\begin{align}\notag
\ul\alpha\left(\left\Vert\ol\xi_{\ol{H}_{\ol{\params}}(x_{\ol{\params}})p}(\tau)-\ol{H}_{\ol{\params}}\left(x'_{\ol{\params}}\right)\right\Vert^q\right)&\leq V\left(\ol\xi_{\ol{H}_{\ol{\params}}(x_{\ol{\params}})p}(\tau),\ol{H}_{\ol{\params}}\left(x'_{\ol{\params}}\right)\right)=V\left(\ol\xi_{\ol\xi_{x_sx_{\ol{\params}}}(N\tau)p}(\tau),\ol\xi_{x_sx'_{\ol{\params}}}(N\tau)\right)\\\notag&=V\left(\ol\xi_{\ol\xi_{x_sp_1}(\tau)(p_2,\ldots,p_N,p)}(N\tau),\ol\xi_{x_s(p_2,\ldots,p_N,p)}(N\tau)\right)\leq\mathsf{e}^{-\kappa N\tau}V\left(\ol\xi_{x_sp_1}(\tau),x_s\right).
\end{align}
\end{small}
Hence, one gets
\begin{small}
\begin{align}\label{upper_bound2}
\left\Vert\ol\xi_{\ol{H}_{\ol{\params}}(x_{\ol{\params}})p}(\tau)-\ol{H}_{\ol{\params}}\left(x'_{\ol{\params}}\right)\right\Vert\leq\left(\ul\alpha^{-1}\left(\mathsf{e}^{-\kappa N\tau}V\left(\ol\xi_{x_sp_1}(\tau),x_s\right)\right)\right)^{1/q},
\end{align}
\end{small}because of $\ul\alpha\in\mathcal{K}_\infty$. Since the inequality \eqref{upper_bound2} holds for all $x_{\ol{\params}}\in X_{\ol{\params}}$ and $p\in U_{\ol{\params}}$, and $\ul\alpha\in\mathcal{K}_\infty$, inequality \eqref{upper_bound} holds. 
\end{proof}

The next lemma provides a similar result as the one of Lemma \ref{lemma1}, but by using the symbolic model $S_{\ol{\params}}(\Sigma)$ rather than $\ol{S}_{\ol{\params}}(\Sigma)$.
\begin{lemma}\label{lemma4}
Consider a stochastic switched system $\Sigma$, admitting a common $\delta$-GAS-M$_q$ Lyapunov function $V$, and consider its corresponding symbolic model $S_{\ol{\params}}(\Sigma)$. One has:
\begin{small}
\begin{align}\label{upper_bound4}
\widehat\eta\leq&\left(\ul\alpha^{-1}\left(\mathsf{e}^{-\kappa N\tau}\max_{p\in\mathsf{P}}\EE\left[V\left(\xi_{x_sp}(\tau),x_s\right)\right]\right)\right)^{1/q},
\end{align}
\end{small}
where
\begin{small}
\begin{align}\label{eta1}
\widehat\eta\Let\max_{\substack{p\in\mathsf{P},x_{\ol{\params}}\in X_{\ol{\params}}\\x_{\ol{\params}}\rTo_{\ol{\params}}^px'_{\ol{\params}}}}\EE\left[\left\Vert\xi_{H_{\ol{\params}}(x_{\ol{\params}})p}(\tau)-H_{\ol{\params}}\left(x'_{\ol{\params}}\right)\right\Vert\right].
\end{align}
\end{small}
\end{lemma}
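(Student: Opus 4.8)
The plan is to mirror the proof of Lemma \ref{lemma1}, replacing the noise-free trajectories $\ol\xi$ by the genuine solution processes $\xi$ and carrying out every estimate in expectation. Fix an arbitrary transition $x_{\ol{\params}}\rTo_{\ol{\params}}^p x'_{\ol{\params}}$ with $x_{\ol{\params}}=(p_1,p_2,\ldots,p_N)$, so that $x'_{\ol{\params}}=(p_2,\ldots,p_N,p)$, and recall that $H_{\ol{\params}}(x_{\ol{\params}})=\xi_{x_sx_{\ol{\params}}}(N\tau)$ and $H_{\ol{\params}}(x'_{\ol{\params}})=\xi_{x_sx'_{\ol{\params}}}(N\tau)$. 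Setting $Z\Let\xi_{H_{\ol{\params}}(x_{\ol{\params}})p}(\tau)-H_{\ol{\params}}(x'_{\ol{\params}})$, the quantity to be controlled is $\EE[\Vert Z\Vert]$, and the maximum over all transitions will yield \eqref{upper_bound4}.

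First I would rewrite the two processes entering $Z$ under a common switching signal. By the flow (Markov) property of the solution process, advancing $H_{\ol{\params}}(x_{\ol{\params}})$ one further step under mode $p$ coincides with starting at $\xi_{x_sp_1}(\tau)$ and then following the signal $(p_2,\ldots,p_N,p)$ for time $N\tau$; that is, $\xi_{H_{\ol{\params}}(x_{\ol{\params}})p}(\tau)=\xi_{\xi_{x_sp_1}(\tau)\,(p_2,\ldots,p_N,p)}(N\tau)$, whereas $H_{\ol{\params}}(x'_{\ol{\params}})=\xi_{x_s\,(p_2,\ldots,p_N,p)}(N\tau)$. Thus the two endpoints of $Z$ are the time-$N\tau$ values of two solution processes driven by the \emph{same} switching signal $(p_2,\ldots,p_N,p)$ but issued from the initial conditions $\xi_{x_sp_1}(\tau)$ and $x_s$, respectively. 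Since $V$ is a common $\delta$-GAS-M$_q$ Lyapunov function, the decay estimate established in the proof of Theorem \ref{theorem2} then gives $\EE[V(\xi_{H_{\ol{\params}}(x_{\ol{\params}})p}(\tau),H_{\ol{\params}}(x'_{\ol{\params}}))]\leq\mathsf{e}^{-\kappa N\tau}\EE[V(\xi_{x_sp_1}(\tau),x_s)]$.

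To finish, I would combine property (ii) of Definition \ref{delta_SGAS_Lya}, the convexity of $\ul\alpha$ and Jensen's inequality to get $\ul\alpha(\EE[\Vert Z\Vert^q])\leq\EE[\ul\alpha(\Vert Z\Vert^q)]\leq\EE[V(\cdot,\cdot)]$, hence $\EE[\Vert Z\Vert^q]\leq\ul\alpha^{-1}(\mathsf{e}^{-\kappa N\tau}\EE[V(\xi_{x_sp_1}(\tau),x_s)])$. As $q\geq1$ and $r\mapsto r^q$ is convex, a further application of Jensen yields $\EE[\Vert Z\Vert]\leq(\EE[\Vert Z\Vert^q])^{1/q}$, so that $\EE[\Vert Z\Vert]\leq(\ul\alpha^{-1}(\mathsf{e}^{-\kappa N\tau}\EE[V(\xi_{x_sp_1}(\tau),x_s)]))^{1/q}$. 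Bounding $\EE[V(\xi_{x_sp_1}(\tau),x_s)]\leq\max_{p'\in\mathsf{P}}\EE[V(\xi_{x_sp'}(\tau),x_s)]$, which is independent of the chosen transition, and taking the maximum over all $x_{\ol{\params}}\rTo^p_{\ol{\params}}x'_{\ol{\params}}$ delivers \eqref{upper_bound4}.

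The one delicate point — automatic in Lemma \ref{lemma1} because everything there was deterministic — is the invocation of the Theorem \ref{theorem2} decay estimate with the \emph{random} initial condition $\xi_{x_sp_1}(\tau)$, which is measurable in $\sigalg_\tau$ rather than in $\sigalg_0$. This is justified by the time-homogeneity of the SDE \eqref{eq00} and the Markov property of the solution process: passing to the shifted filtration $\sigalg_{\cdot+\tau}$ and the shifted Brownian motion $W_{\cdot+\tau}-W_\tau$, the pair $\xi_{x_sp_1}(\tau),x_s$ serves as $\sigalg_0$-measurable initial data of a fresh process, and the Itô-plus-Gronwall argument of Theorem \ref{theorem2} applies verbatim, the resulting inequality surviving the outer expectation. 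The remaining manipulations (Jensen and the monotonicity of $\ul\alpha^{-1}$) are routine.
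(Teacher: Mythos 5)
Your overall skeleton is the one the paper intends (its own proof of this lemma is a one-line reduction to Lemma \ref{lemma1} plus convexity of $\ul\alpha$ and Jensen's inequality), and your concluding manipulations (property (ii) of Definition \ref{delta_SGAS_Lya}, Jensen for the convex $\ul\alpha$ and for $r\mapsto r^{1/q}$, monotonicity of $\ul\alpha^{-1}$) are fine. The gap sits exactly in the step you flag as delicate, and your proposed repair does not close it. The decay estimate of Theorem \ref{theorem2} applies to two solution processes driven by the \emph{same} Brownian motion over the \emph{same} time window; this is what the cross terms $g_p(x)\,g_p^T(x')$ in condition (iii) of Definition \ref{delta_SGAS_Lya} encode, and without common noise the estimate is false (two solutions issued from the \emph{same} initial condition but driven by independent noises would have to satisfy $\EE[V]\leq\mathsf{e}^{-\kappa t}\cdot 0=0$, yet the noise spreads them apart). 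Now look at the pair you feed into that estimate. After your (correct) flow-property rewriting, $\xi_{H_{\ol{\params}}(x_{\ol{\params}})p}(\tau)$ is the time-$N\tau$ value of a solution under the signal $(p_2,\ldots,p_N,p)$ driven by the increments of $W$ on $[\tau,(N+1)\tau]$, whereas $H_{\ol{\params}}(x'_{\ol{\params}})=\xi_{x_sx'_{\ol{\params}}}(N\tau)$ is driven by the increments of $W$ on $[0,N\tau]$. These two noise records have zero cross-variation, so the pair evolves as if driven by \emph{independent} noises and the generator computation behind Theorem \ref{theorem2} does not apply to it. Passing to the shifted filtration, as you propose, legitimizes $\xi_{x_sp_1}(\tau)$ as initial data and makes the \emph{first} process a solution with respect to $W_{\cdot+\tau}-W_\tau$; but the solution issued from $x_s$ and driven by that shifted noise is only equal in law to $H_{\ol{\params}}(x'_{\ol{\params}})$, not $\PP$-a.s.\ equal to it, and $\widehat\eta$ in \eqref{eta1} is an expectation of a joint realization, not a functional of marginal laws. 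Hence your central inequality $\EE[V(\xi_{H_{\ol{\params}}(x_{\ol{\params}})p}(\tau),H_{\ol{\params}}(x'_{\ol{\params}}))]\leq\mathsf{e}^{-\kappa N\tau}\,\EE[V(\xi_{x_sp_1}(\tau),x_s)]$ is not established by the argument given.

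The failure is not cosmetic. Take a single mode with affine drift whose equilibrium is not the origin, e.g.\ $\diff\xi=-\kappa(\xi-c)\diff t+\sigma\xi\diff W_t$ with $c\neq 0$ (admissible, since the standing assumption only forces $f_p(0_n)=0_n$ for non-affine modes, while $g_p(0_n)=0_{n\times\widehat{q}}$ holds here). Then $\widehat\eta=\EE\left[\left\vert\xi_{x_s}((N+1)\tau)-\xi_{x_s}(N\tau)\right\vert\right]$ converges, as $N\to\infty$, to the expected one-step increment of the stationary process, which is strictly positive, while the right-hand side of \eqref{upper_bound4} tends to $0$; so with the random variables interpreted as you (and, read literally, the paper) define them, the chained decay cannot hold in general. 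A correct proof must either fix the coupling --- take as representative of $H_{\ol{\params}}(x'_{\ol{\params}})$ the solution from $x_s$ driven by the shifted noise $W_{\cdot+\tau}-W_\tau$, so that it genuinely shares its driving noise with $\xi_{H_{\ol{\params}}(x_{\ol{\params}})p}(\tau)$ and Theorem \ref{theorem2} applies --- or restrict to $f_p(0_n)=0_n$, $g_p(0_n)=0_{n\times\widehat{q}}$ and argue through the common equilibrium, which yields a bound of a different form. Lemma \ref{lemma1} never meets this issue because its trajectories $\ol\xi$ are deterministic, so ``same noise'' is vacuous; that is precisely the part of its proof that does not transfer verbatim.
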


\begin{proof}
The proof is similar to the one of Lemma \ref{lemma1} and can be shown by using convexity of $\ul\alpha$ and Jensen inequality \cite{oksendal}. 
\end{proof}

We can now present the first main result of this subsection, relating the existence of a common $\delta$-GAS-M$_q$ Lyapunov function to the construction of a bisimilar finite abstraction without any continuous space discretization. In order to show the next result, we assume that $f_p(0_n)=0_n$ only if $\Sigma_p$ is not affine and $g_p(0_n)=0_{n\times\widehat{q}}$ for any $p\in\mathsf{P}$.

\begin{theorem}\label{main_theorem}
Consider a stochastic switched system $\Sigma$ admitting a common $\delta$-GAS-M$_q$ Lyapunov function $V$, of the form of the one explained in Lemma \ref{lem:moment est}. Let $\ol\eta$ be given by \eqref{eta}. For any $\varepsilon\in\R^+$ and any triple $\ol{\mathsf{q}}=\left(\tau,N,x_s\right)$ of parameters satisfying
\begin{small}
\begin{align}
\label{bisim_cond11}
\mathsf{e}^{-\kappa\tau}\underline\alpha\left(\varepsilon^q\right)+\widehat\gamma\left(\left(h_{x_s}((N+1)\tau)\right)^{\frac{1}{q}}+\ol\eta\right)&\leq\underline\alpha\left(\varepsilon^q\right),
\end{align}
\end{small}the relation \begin{small}$$R=\left\{\left(x_\tau,x_{\ol{\params}}\right)\in X_\tau\times X_{\ol{\params}}\,\,|\,\,\EE\left[V\left(x_\tau,\ol{H}_{\ol{\params}}(x_{\ol{\params}})\right)\right]\leq\ul\alpha\left(\varepsilon^q\right)\right\}$$\end{small}is an $\varepsilon$-approximate bisimulation relation between $\ol{S}_{\ol{\mathsf{q}}}(\Sigma)$ and $S_{\tau}(\Sigma)$.
\end{theorem}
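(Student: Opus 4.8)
The plan is to show, following the template of Theorem~\ref{main_theorem1} and \cite[Thm.~4.1]{girard2}, that $R$ is an $\varepsilon$-approximate simulation relation from $S_\tau(\Sigma)$ to $\ol{S}_{\ol{\params}}(\Sigma)$ and that $R^{-1}$ is one from $\ol{S}_{\ol{\params}}(\Sigma)$ to $S_\tau(\Sigma)$; by Definition~\ref{APSR} these two inclusions are exactly what is meant by an $\varepsilon$-approximate bisimulation relation between the two systems. Both inclusions hinge on a single one-step estimate, because the transition relation of $\ol{S}_{\ol{\params}}(\Sigma)$ is a shift: a transition $x_{\ol{\params}}=(p_1,\dots,p_N)\rTo^{p}_{\ol{\params}}x'_{\ol{\params}}=(p_2,\dots,p_N,p)$ is always matched in $S_\tau(\Sigma)$ by the concrete transition $x_\tau\rTo^{p}_\tau\xi_{x_\tau p}(\tau)$ carrying the \emph{same} mode $p$, and conversely. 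Using the same input on both sides is what lets me invoke the common-Lyapunov contraction.

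For condition (i), I would take $(x_\tau,x_{\ol{\params}})\in R$ and write $y=\ol{H}_{\ol{\params}}(x_{\ol{\params}})$. From property (ii) of $V$, convexity of $\ul\alpha$, and Jensen's inequality, $\ul\alpha\left(\EE[\|x_\tau-y\|^q]\right)\le\EE\left[\ul\alpha(\|x_\tau-y\|^q)\right]\le\EE[V(x_\tau,y)]\le\ul\alpha(\varepsilon^q)$, whence $\mathbf{d}(H_\tau(x_\tau),\ol{H}_{\ol{\params}}(x_{\ol{\params}}))=(\EE[\|x_\tau-y\|^q])^{1/q}\le\varepsilon$; this is verbatim the computation of \eqref{convexity1}.

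The core is to verify that $(\xi_{x_\tau p}(\tau),x'_{\ol{\params}})\in R$, i.e.\ $\EE[V(\xi_{x_\tau p}(\tau),y')]\le\ul\alpha(\varepsilon^q)$ with $y'=\ol{H}_{\ol{\params}}(x'_{\ol{\params}})$. I would introduce the one-step reference $\xi_{yp}(\tau)$ (the solution process of $\Sigma_p$ started at the deterministic point $y$, driven by the same Brownian motion), split $V(\xi_{x_\tau p}(\tau),y')=V(\xi_{x_\tau p}(\tau),\xi_{yp}(\tau))+\big[V(\xi_{x_\tau p}(\tau),y')-V(\xi_{x_\tau p}(\tau),\xi_{yp}(\tau))\big]$, and bound the pieces separately. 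The first is handled exactly as in Theorem~\ref{theorem2}: the generator inequality integrated over $[k\tau,(k+1)\tau]$ with Gronwall gives $\EE[V(\xi_{x_\tau p}(\tau),\xi_{yp}(\tau))]\le e^{-\kappa\tau}\EE[V(x_\tau,y)]\le e^{-\kappa\tau}\ul\alpha(\varepsilon^q)$, the first summand of \eqref{bisim_cond11}. For the second piece I would use \eqref{supplement}, concavity of $\widehat\gamma$ and Jensen to pull the expectation inside, together with $\|y'-\xi_{yp}(\tau)\|\le\|\xi_{yp}(\tau)-\ol\xi_{yp}(\tau)\|+\|\ol\xi_{yp}(\tau)-y'\|$, keeping \emph{both} deviations inside a single $\widehat\gamma$ (essential, since concavity only gives $\widehat\gamma(a)+\widehat\gamma(b)\ge\widehat\gamma(a+b)$). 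The deterministic shift term is controlled by Lemma~\ref{lemma1}, which is precisely why $\ol\eta$ appears: $\|\ol\xi_{yp}(\tau)-y'\|\le\ol\eta$; the stochastic term is controlled by Lemma~\ref{lem:moment est}. Invoking \eqref{bisim_cond11} then closes the estimate, and the symmetric direction $R^{-1}$ follows along the same lines.

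The step I expect to be the main obstacle is controlling the stochastic-versus-deterministic deviation so that it lands \emph{exactly} on $(h_{x_s}((N+1)\tau))^{1/q}$, rather than on the one-step bound $(h_{y}^{p}(\tau))^{1/q}$ that Lemma~\ref{lem:moment est} yields naively for a step started at $y$. The delicate point is that the symbolic output $y=\ol\xi_{x_sx_{\ol{\params}}}(N\tau)$ is the noise-free trajectory emanating from the \emph{fixed} source $x_s$, whereas $x_\tau$ is an arbitrary random variable; the deviation must therefore be measured along the full horizon from $x_s$, exploiting the deterministic semigroup identity $\ol\xi_{yp}(\tau)=\ol\xi_{x_sx_{\ol{\params}}p}((N+1)\tau)$ and comparing it with the full stochastic trajectory $\xi_{x_sx_{\ol{\params}}p}((N+1)\tau)$ through Lemma~\ref{lem:moment est} read off along the whole switching signal (which is legitimate here because $V$ is a \emph{common} Lyapunov function, so the contraction rate $\kappa$ is uniform over modes). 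Reconciling this full-horizon deviation with the one-step contraction above, so that no spurious cross terms survive and the bound collapses to precisely the right-hand side of \eqref{bisim_cond11}, is the part demanding the most careful bookkeeping, and it is exactly where the choice of the temporal horizon $N$ and the source state $x_s$ in $\ol{\mathsf q}$ is used.
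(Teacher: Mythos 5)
Your proposal reproduces the paper's proof essentially step for step: the same relation $R$, the same two simulation directions (first $R$ from $S_\tau(\Sigma)$ to $\ol{S}_{\ol{\params}}(\Sigma)$, then $R^{-1}$), condition (i) via convexity of $\ul\alpha$ and Jensen exactly as in \eqref{convexity}, and condition (ii) via the same decomposition around the one-step reference $\xi_{yp}(\tau)$ with $y=\ol{H}_{\ol{\params}}(x_{\ol{\params}})$: the contraction estimate \eqref{b020}, the bound \eqref{supplement} with concavity of $\widehat\gamma$ and Jensen, the triangle inequality keeping both deviations inside a single $\widehat\gamma$, the deterministic term bounded by $\ol\eta$, the stochastic term by Lemma \ref{lem:moment est}, and \eqref{bisim_cond11} to close. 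Two remarks on where you depart from the paper. First, a minor misattribution: $\Vert\ol\xi_{yp}(\tau)-\ol{H}_{\ol{\params}}(x'_{\ol{\params}})\Vert\leq\ol\eta$ is immediate from the definition \eqref{eta} of $\ol\eta$ (a maximum over exactly these quantities), not from Lemma \ref{lemma1}; that lemma only supplies a computable over-approximation of $\ol\eta$, which the paper notes may be substituted into \eqref{bisim_cond11}.

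Second, regarding the ``main obstacle'' you flag: the paper does not perform the full-horizon comparison you sketch. It simply invokes Lemma \ref{lem:moment est} and writes $\EE\left[\Vert\xi_{yp}(\tau)-\ol\xi_{yp}(\tau)\Vert\right]\leq\left(h_{x_s}((N+1)\tau)\right)^{1/q}$, leaving implicit the passage from the naive one-step bound $\left(h_{y}(\tau)\right)^{1/q}$ to $\left(h_{x_s}((N+1)\tau)\right)^{1/q}$. Your instinct that this passage deserves an argument is sound, but the route you propose would not close it: $\xi_{x_sx_{\ol{\params}}p}((N+1)\tau)$ and $\xi_{yp}(\tau)$ are different random variables --- the former accumulates noise over the whole horizon $[0,(N+1)\tau]$ starting from $x_s$, the latter only over one step of length $\tau$ starting from the deterministic point $y=\ol\xi_{x_sx_{\ol{\params}}}(N\tau)$ --- so a bound on $\EE\left[\Vert\xi_{x_sx_{\ol{\params}}p}((N+1)\tau)-\ol\xi_{x_sx_{\ol{\params}}p}((N+1)\tau)\Vert\right]$ says nothing directly about the quantity actually appearing in the decomposition; moreover, Lemma \ref{lem:moment est} is stated for a single subsystem $\Sigma_p$, not along a switching signal, so ``reading it off along the whole signal'' is itself an unproved extension. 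The route consistent with the paper is to apply Lemma \ref{lem:moment est} one step from $y$, and then argue (using $f_p(0_n)=0_n$, the noise-free decay $\Vert y\Vert^q\leq\beta\left(\Vert x_s\Vert^q,N\tau\right)$, and the explicit form of $h_x^p$) that $h_{y}(\tau)$ is dominated by $h_{x_s}((N+1)\tau)$ --- the step the paper asserts without spelling out.
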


\begin{proof}
We start by proving that $R$ is an $\varepsilon$-approximate simulation relation from $S_{\tau}(\Sigma)$ to $\ol{S}_{{\ol{\params}}}(\Sigma)$. Consider any \mbox{$\left(x_{\tau},x_{{\ol{\params}}}\right)\in R$}. Condition (i) in Definition \ref{APSR} is satisfied because
\begin{small}
\begin{equation}
\label{convexity}
(\mathbb{E}[\Vert x_{\tau}-\ol{H}_{\ol{\params}}(x_{{\ol{\params}}})\Vert^q])^{\frac{1}{q}}\leq\left(\underline\alpha^{-1}\left(\mathbb{E}\left[V\left(x_{\tau},\ol{H}_{\ol{\params}}(x_{{\ol{\params}}})\right)\right]\right)\right)^{\frac{1}{q}}\leq\varepsilon.
\end{equation}
\end{small}We used the convexity assumption of $\underline\alpha$ and the Jensen inequality \cite{oksendal} to show the inequalities in (\ref{convexity}). Let us now show that condition (ii) in Definition
\ref{APSR} holds. Consider the transition \mbox{$x_{\tau}\rTo^{p}_{\tau} x'_{\tau}=\xi_{x_{\tau}p}(\tau)$} $\PP$-a.s. in $S_{\tau}(\Sigma)$. Since $V$ is a common $\delta$-GAS-M$_q$ Lyapunov function for $\Sigma$, we have (cf. proof of Theorem \ref{theorem2})
\begin{small}
\begin{align}\label{b020}
\mathbb{E}[V(x'_{\tau},\xi_{\ol{H}_{\ol{\params}}(x_{{\ol{\params}}})p}(\tau))] \leq \EE[V(x_\tau,\ol{H}_{\ol{\params}}(x_q))] \mathsf{e}^{-\kappa\tau}\leq \underline\alpha(\varepsilon^q) \mathsf{e}^{-\kappa\tau}.
\end{align}
\end{small}Note that, by the definition of $\ol{S}_{\ol{\params}}(\Sigma)$, there exists $x_{{\ol{\params}}}\rTo^{p}_{{\ol{\params}}}x'_{{\ol{\params}}}$ in $\ol{S}_{{\ol{\params}}}(\Sigma)$.
Using Lemma \ref{lem:moment est}, the concavity of $\widehat\gamma$, the Jensen inequality \cite{oksendal}, equation \eqref{eta}, the inequalities (\ref{supplement}), (\ref{bisim_cond11}), (\ref{b020}), and triangle inequality, we obtain
\begin{small}
\begin{align*}
\mathbb{E}[V(x'_{\tau},\ol{H}_{\ol{\params}}(x'_{{\ol{\params}}}))]=&\mathbb{E}[V(x'_\tau,\xi_{\ol{H}_{\ol{\params}}(x_{{\ol{\params}}})p}(\tau))+V(x'_{\tau},\ol{H}_{\ol{\params}}(x'_{\ol{\params}}))-V(x'_\tau,\xi_{\ol{H}_{\ol{\params}}(x_{{\ol{\params}}})p}(\tau))]\\ \notag
=&  \mathbb{E}[V(x'_{\tau},\xi_{\ol{H}_{\ol{\params}}(x_{{\ol{\params}}})p}(\tau))]+\mathbb{E}[V(x'_{\tau},\ol{H}_{\ol{\params}}(x'_{\ol{\params}}))-V(x'_\tau,\xi_{\ol{H}_{\ol{\params}}(x_{{\ol{\params}}})p}(\tau))]\\\notag\leq&\underline\alpha(\varepsilon^q)\mathsf{e}^{-\kappa\tau}+\mathbb{E}[\widehat\gamma(\Vert\xi_{\ol{H}_{\ol{\params}}(x_{{\ol{\params}}})p}(\tau)-\ol{H}_{\ol{\params}}(x'_{{\ol{\params}}})\Vert)]\\\notag
\leq&\underline\alpha(\varepsilon^q)\mathsf{e}^{-\kappa\tau}+\widehat\gamma(\mathbb{E}[\Vert\xi_{\ol{H}_{\ol{\params}}(x_{{\ol{\params}}})p}(\tau)-\ol{\xi}_{\ol{H}_{\ol{\params}}(x_{{\ol{\params}}})p}(\tau)+\ol{\xi}_{\ol{H}_{\ol{\params}}(x_{{\ol{\params}}})p}(\tau)-\ol{H}_{\ol{\params}}(x'_{{\ol{\params}}})\Vert])\\\notag
\leq&\underline\alpha(\varepsilon^q)\mathsf{e}^{-\kappa\tau}+\widehat\gamma(\mathbb{E}[\Vert\xi_{\ol{H}_{\ol{\params}}(x_{{\ol{\params}}})p}(\tau)-\ol{\xi}_{\ol{H}_{\ol{\params}}(x_{{\ol{\params}}})p}(\tau)\Vert]+\Vert\ol{\xi}_{\ol{H}_{\ol{\params}}(x_{{\ol{\params}}})p}(\tau)-\ol{H}_{\ol{\params}}(x'_{{\ol{\params}}})\Vert)\\\notag
\leq&\underline\alpha(\varepsilon^q)\mathsf{e}^{-\kappa\tau}+\widehat\gamma((h_{x_s}((N+1)\tau))^{\frac{1}{q}}+\eta)\leq\underline\alpha(\varepsilon^q).
\end{align*}
\end{small}Therefore, we conclude that \mbox{$\left(x'_{\tau},x'_{{\ol{\params}}}\right)\in{R}$} and that condition (ii) in Definition \ref{APSR} holds.

In a similar way, we can prove that that $R^{-1}$ is an
$\varepsilon$-approximate simulation relation from $\ol{S}_{{\ol{\params}}}(\Sigma)$ to $S_{\tau}(\Sigma)$ implying that $R$ is an $\varepsilon$-approximate bisimulation relation between $\ol{S}_{{\ol{\params}}}(\Sigma)$ and $S_\tau(\Sigma)$.
\end{proof}

Note that one can also use any over approximation of $\ol\eta$ such as the one in \eqref{upper_bound} instead of $\ol\eta$ in condition \eqref{bisim_cond11}. By choosing $N$ sufficiently large, one can enforce $h_{x_s}((N+1)\tau)$ and $\ol\eta$ to be sufficiently small. Hence, it can be readily seen that for a given precision $\varepsilon$,
there always exists a large value of $N$, such that the condition in (\ref{bisim_cond11}) is satisfied.

Note that the results in \cite{corronc} for non-probabilistic models are fully recovered by the statement in Theorem \ref{main_theorem} if $\Sigma$ is not affected by any noise.

The next theorem provides a result that is similar to the one of Theorem \ref{main_theorem}, but by using the symbolic model $S_{{\ol{\params}}}(\Sigma)$.

\begin{theorem}\label{main_theorem3}
Consider a stochastic switched system $\Sigma$, admitting a common $\delta$-GAS-M$_q$ Lyapunov function $V$. Let $\widehat\eta$ be given by \eqref{eta1}. For any $\varepsilon\in\R^+$ and any triple $\ol{\mathsf{q}}=\left(\tau,N,x_s\right)$ of parameters satisfying
\begin{small}
\begin{align}
\label{bisim_cond3}
\mathsf{e}^{-\kappa\tau}\underline\alpha\left(\varepsilon^q\right)+\widehat\gamma\left(\widehat\eta\right)&\leq\underline\alpha\left(\varepsilon^q\right),
\end{align}
\end{small}
the relation \begin{small}$$R=\left\{(x_\tau,x_{\ol{\params}})\in X_\tau\times X_{\ol{\params}}\,\,|\,\,\EE\left[V(x_\tau,H_{\ol{\params}}(x_{\ol{\params}}))\right]\leq\ul\alpha\left(\varepsilon^q\right)\right\}$$\end{small}is an $\varepsilon$-approximate bisimulation relation between ${S}_{\ol{\mathsf{q}}}(\Sigma)$ and $S_{\tau}(\Sigma)$.
\end{theorem}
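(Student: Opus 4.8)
The plan is to reprise the architecture of the proof of Theorem~\ref{main_theorem} almost line for line, exploiting a single genuine simplification: since both output maps $H_\tau$ and $H_{\ol{\params}}$ now return authentic (stochastic) solution processes of $\Sigma$, there is no longer any need to pass through the noise-free trajectory $\ol\xi$ or to invoke Lemma~\ref{lem:moment est}. All of the discrepancy produced by one shift step of $S_{\ol{\params}}(\Sigma)$ is already packaged into the scalar $\widehat\eta$ of \eqref{eta1}. Concretely, I would first show that $R$ is an $\varepsilon$-approximate simulation relation from $S_\tau(\Sigma)$ to $S_{\ol{\params}}(\Sigma)$, and then that $R^{-1}$ is an $\varepsilon$-approximate simulation relation from $S_{\ol{\params}}(\Sigma)$ to $S_\tau(\Sigma)$; together these give that $R$ is an $\varepsilon$-approximate bisimulation relation between the two systems.

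For condition~(i) of Definition~\ref{APSR}, fix $(x_\tau,x_{\ol{\params}})\in R$, so that $\EE[V(x_\tau,H_{\ol{\params}}(x_{\ol{\params}}))]\le\ul\alpha(\varepsilon^q)$. Applying the convexity of $\ul\alpha$, Jensen's inequality, and property~(ii) of Definition~\ref{delta_SGAS_Lya} precisely as in \eqref{convexity}, I obtain $\ul\alpha\big(\EE[\|x_\tau-H_{\ol{\params}}(x_{\ol{\params}})\|^q]\big)\le\EE[V(x_\tau,H_{\ol{\params}}(x_{\ol{\params}}))]\le\ul\alpha(\varepsilon^q)$, hence $\mathbf{d}(H_\tau(x_\tau),H_{\ol{\params}}(x_{\ol{\params}}))\le\varepsilon$. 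For condition~(ii), I take any transition $x_\tau\rTo^{p}_\tau x'_\tau=\xi_{x_\tau p}(\tau)$ $\PP$-a.s.\ in $S_\tau(\Sigma)$ and match it with the shift transition $x_{\ol{\params}}\rTo^{p}_{\ol{\params}}x'_{\ol{\params}}$, which exists for every mode $p$ by the definition of $S_{\ol{\params}}(\Sigma)$. The common-Lyapunov decay estimate from the proof of Theorem~\ref{theorem2} yields $\EE[V(x'_\tau,\xi_{H_{\ol{\params}}(x_{\ol{\params}})p}(\tau))]\le\mathsf{e}^{-\kappa\tau}\EE[V(x_\tau,H_{\ol{\params}}(x_{\ol{\params}}))]\le\mathsf{e}^{-\kappa\tau}\ul\alpha(\varepsilon^q)$, exactly as in \eqref{b020}. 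I then decompose $\EE[V(x'_\tau,H_{\ol{\params}}(x'_{\ol{\params}}))]$ by adding and subtracting $V(x'_\tau,\xi_{H_{\ol{\params}}(x_{\ol{\params}})p}(\tau))$, bound the correction term using \eqref{supplement}, the concavity of $\widehat\gamma$ and Jensen's inequality, and recognize that $\EE[\|\xi_{H_{\ol{\params}}(x_{\ol{\params}})p}(\tau)-H_{\ol{\params}}(x'_{\ol{\params}})\|]\le\widehat\eta$ directly from \eqref{eta1}. This delivers $\EE[V(x'_\tau,H_{\ol{\params}}(x'_{\ol{\params}}))]\le\mathsf{e}^{-\kappa\tau}\ul\alpha(\varepsilon^q)+\widehat\gamma(\widehat\eta)\le\ul\alpha(\varepsilon^q)$ by the hypothesis \eqref{bisim_cond3}, so $(x'_\tau,x'_{\ol{\params}})\in R$.

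The reverse inclusion uses the identical estimate: the membership condition $(x'_\tau,x'_{\ol{\params}})\in R$ that must be verified is the same regardless of which system is taken to move first, and both $S_\tau(\Sigma)$ and $S_{\ol{\params}}(\Sigma)$ are deterministic with transitions labelled by the common alphabet $\mathsf{P}$. Hence, given a shift transition $x_{\ol{\params}}\rTo^{p}_{\ol{\params}}x'_{\ol{\params}}$ in $S_{\ol{\params}}(\Sigma)$, I fire the same mode $p$ in $S_\tau(\Sigma)$, reach the unique successor $x'_\tau=\xi_{x_\tau p}(\tau)$, and reuse the computation above to obtain $(x'_\tau,x'_{\ol{\params}})\in R$, i.e.\ $(x'_{\ol{\params}},x'_\tau)\in R^{-1}$; condition~(i) for $R^{-1}$ is automatic because $\mathbf{d}$ is a metric and therefore symmetric. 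I do not anticipate a substantive obstacle, since the Lyapunov contraction is supplied by Theorem~\ref{theorem2}, the modulus of continuity of $V$ by assumption \eqref{supplement}, and the quantity $\widehat\eta$ was defined in \eqref{eta1} precisely so that the entire stochastic shift error collapses into the single additive term $\widehat\gamma(\widehat\eta)$. The only point requiring a little care is the identification of that shift error with $\widehat\eta$, which is exactly why $\widehat\eta$ is taken as a maximum over all reachable shift transitions; and for verifying \eqref{bisim_cond3} in practice one may safely replace $\widehat\eta$ by the explicit over-approximation furnished by Lemma~\ref{lemma4}.
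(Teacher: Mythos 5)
Your proposal is correct and follows essentially the paper's own route: the paper proves this theorem by declaring it ``similar to the one of Theorem~\ref{main_theorem}'', and your write-up is precisely that adaptation, with the correct observation that the detour through the noise-free trajectory $\ol\xi$ and Lemma~\ref{lem:moment est} drops out because the definition of $\widehat\eta$ in \eqref{eta1} already bounds the expected one-step mismatch $\EE\left[\left\Vert\xi_{H_{\ol{\params}}(x_{\ol{\params}})p}(\tau)-H_{\ol{\params}}\left(x'_{\ol{\params}}\right)\right\Vert\right]$, leaving only the single additive term $\widehat\gamma\left(\widehat\eta\right)$ in the contraction estimate. The handling of both simulation directions via the same membership computation, and the remark that in practice $\widehat\eta$ may be replaced by the over-approximation of Lemma~\ref{lemma4}, also match the paper's treatment.
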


\begin{proof}
The proof is similar to the one of Theorem \ref{main_theorem}.
\end{proof}

Here, one can also use any over approximation of $\widehat\eta$ such as the one in \eqref{upper_bound4} instead of $\widehat\eta$ in condition \eqref{bisim_cond3}. Finally, we establish the results on the existence of symbolic model $\ol{S}_{\ol{\params}}(\Sigma)$ (resp. $S_{\ol{\params}}(\Sigma)$) such that \mbox{$\ol{S}_{\ol{\params}}(\Sigma)\cong_{\mathcal{S}}^{\varepsilon}S_\tau(\Sigma)$} (resp. \mbox{$S_{\ol{\params}}(\Sigma)\cong_{\mathcal{S}}^{\varepsilon}S_\tau(\Sigma)$}).

\begin{theorem}\label{main_theorem5}
Consider the result in Theorem \ref{main_theorem}. If we choose: \begin{small}$$X_{\tau0}=\{x\in\R^n\,\,|\,\,\Vert x-\ol{H}_{\ol{\params}}(x_{{\ol{\params}}0})\Vert\leq\left(\ol\alpha^{-1}\left(\ul\alpha\left(\varepsilon^q\right)\right)\right)^{\frac{1}{q}},~\forall x_{{\ol{\params}}0}\in X_{{\ol{\params}}0}\},$$\end{small}then we have \mbox{$\ol{S}_{\ol{\params}}(\Sigma)\cong_{\mathcal{S}}^{\varepsilon}S_\tau(\Sigma)$}.
\end{theorem}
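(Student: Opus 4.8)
The plan is to leverage Theorem \ref{main_theorem} directly. Under its hypotheses the relation $R=\{(x_\tau,x_{\ol{\params}})\,|\,\EE[V(x_\tau,\ol{H}_{\ol{\params}}(x_{\ol{\params}}))]\leq\ul\alpha(\varepsilon^q)\}$ is already an $\varepsilon$-approximate bisimulation relation between $\ol{S}_{\ol{\params}}(\Sigma)$ and $S_\tau(\Sigma)$. Since conditions (i)--(ii) of Definition \ref{APSR} make no reference whatsoever to the initial-state sets, $R$ stays an $\varepsilon$-approximate bisimulation relation after we fix $X_{\tau0}$ as prescribed. Hence, to upgrade this to $\ol{S}_{\ol{\params}}(\Sigma)\cong_{\mathcal{S}}^{\varepsilon}S_\tau(\Sigma)$, it suffices to verify the two initial-condition matching requirements in the definition of $\cong_{\mathcal{S}}^{\varepsilon}$. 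The whole argument hinges on the fact that the radius $r\Let(\ol\alpha^{-1}(\ul\alpha(\varepsilon^q)))^{1/q}$ used to define $X_{\tau0}$ is calibrated so that $\ol\alpha(r^q)=\ul\alpha(\varepsilon^q)$, together with condition (ii) in Definition \ref{delta_SGAS_Lya} and the identity $V(x,x)=0$ (which follows from that condition and $\ul\alpha(0)=\ol\alpha(0)=0$).

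For the matching from the symbolic initial states to the concrete ones, I would fix any $x_{\ol{\params}0}\in X_{\ol{\params}0}=\mathsf{P}^N$ and take the concrete initial state $x_{\tau0}$ to be the random variable with a Dirac distribution centered at the deterministic point $\ol{H}_{\ol{\params}}(x_{\ol{\params}0})=\ol\xi_{x_sx_{\ol{\params}0}}(N\tau)$. This point is admissible as an element of $X_{\tau0}$, being at distance zero (hence at most $r$) from $\ol{H}_{\ol{\params}}(x_{\ol{\params}0})$; and $\EE[V(x_{\tau0},\ol{H}_{\ol{\params}}(x_{\ol{\params}0}))]=V(\ol{H}_{\ol{\params}}(x_{\ol{\params}0}),\ol{H}_{\ol{\params}}(x_{\ol{\params}0}))=0\leq\ul\alpha(\varepsilon^q)$, so $(x_{\tau0},x_{\ol{\params}0})\in R$.

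For the reverse matching, I would take any $x_{\tau0}\in X_{\tau0}$; by construction of $X_{\tau0}$ there is a symbolic initial state $x_{\ol{\params}0}\in X_{\ol{\params}0}$ with $\Vert x_{\tau0}-\ol{H}_{\ol{\params}}(x_{\ol{\params}0})\Vert\leq r$. Condition (ii) in Definition \ref{delta_SGAS_Lya} then yields $\EE[V(x_{\tau0},\ol{H}_{\ol{\params}}(x_{\ol{\params}0}))]=V(x_{\tau0},\ol{H}_{\ol{\params}}(x_{\ol{\params}0}))\leq\ol\alpha(\Vert x_{\tau0}-\ol{H}_{\ol{\params}}(x_{\ol{\params}0})\Vert^q)\leq\ol\alpha(r^q)=\ul\alpha(\varepsilon^q)$, so again $(x_{\tau0},x_{\ol{\params}0})\in R$. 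With both matchings in place, the definition of $\cong_{\mathcal{S}}^{\varepsilon}$ is satisfied and the theorem follows.

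Because the bulk of the work is already carried by Theorem \ref{main_theorem}, I expect no deep obstacle here; the only genuinely delicate point is the bookkeeping around the initial-state sets. In particular one must make sure that $X_{\tau0}$ is nonempty and that its defining ball radius is tuned exactly to $\ul\alpha(\varepsilon^q)$ through $\ol\alpha$, so that the $\mathcal{K}_\infty$ inversion $\ol\alpha\big(\ol\alpha^{-1}(\ul\alpha(\varepsilon^q))\big)=\ul\alpha(\varepsilon^q)$ closes both estimates. One must also read the quantifier in the definition of $X_{\tau0}$ so that each concrete initial state admits a \emph{nearby} symbolic image, guaranteeing the concrete-to-symbolic direction, while the symbolic-to-concrete direction is handled by the Dirac-at-image choice above, which places $\ol{H}_{\ol{\params}}(x_{\ol{\params}0})$ itself inside $X_{\tau0}$.
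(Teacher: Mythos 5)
Your proposal is correct and follows essentially the same route as the paper: the relation $R$ from Theorem \ref{main_theorem} is reused unchanged, and the only remaining work is the initial-state matching in both directions via the calibration $\ol\alpha\left(\ol\alpha^{-1}\left(\ul\alpha\left(\varepsilon^q\right)\right)\right)=\ul\alpha\left(\varepsilon^q\right)$ together with condition (ii) of Definition \ref{delta_SGAS_Lya}. The paper writes out only the concrete-to-symbolic direction and dismisses the converse with ``in a similar way''; your explicit Dirac-at-image choice (using $V(x,x)=0$) is precisely the natural completion of that omitted half.
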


\begin{proof}
We start by proving that \mbox{$S_{\tau}(\Sigma)\preceq^{\varepsilon}_\mathcal{S}\ol{S}_{{\ol{\params}}}(\Sigma)$}. For every $x_{\tau 0}\in{X_{\tau 0}}$, there always exists \mbox{$x_{{\ol{\params}} 0}\in{X}_{{\ol{\params}} 0}$} such that $\Vert{x_{\tau0}}-\ol{H}_{\ol{\params}}(x_{{\ol{\params}}0})\Vert\leq\left(\ol\alpha^{-1}\left(\ul\alpha\left(\varepsilon^q\right)\right)\right)^{\frac{1}{q}}$. Then,
\begin{small}
\begin{align}\nonumber
\mathbb{E}\left[V\left({x_{\tau0}},\ol{H}_{\ol{\params}}(x_{{\ol{\params}}0})\right)\right]&=V\left({x_{\tau0}},\ol{H}_{\ol{\params}}(x_{{\ol{\params}}0})\right)\leq\overline\alpha\left(\left\Vert x_{\tau0}-\ol{H}_{\ol{\params}}(x_{{\ol{\params}}0})\right\Vert^q\right)\leq\underline\alpha\left(\varepsilon^q\right),
\end{align}
\end{small}since $\overline\alpha$ is a $\mathcal{K}_\infty$ function.
Hence, \mbox{$\left(x_{\tau0},x_{{\ol{\params}}0}\right)\in{R}$} implying that \mbox{$S_{\tau}(\Sigma)\preceq^{\varepsilon}_\mathcal{S}\ol{S}_{{\ol{\params}}}(\Sigma)$}. In a similar way, we can show that \mbox{$\ol{S}_{{\ol{\params}}}(\Sigma)\preceq^{\varepsilon}_{\mathcal{S}}S_{\tau}(\Sigma)$}, equipped with the relation $R^{-1}$, which completes the proof.
\end{proof}

The next theorem provides a similar result as the one of Theorem \ref{main_theorem5}, but by using the symbolic model $S_{\ol{\params}}(\Sigma)$.

\begin{theorem}\label{main_theorem6}
Consider the results in Theorem \ref{main_theorem3}. If we choose: \begin{small}
\begin{align}\notag
X_{\tau0}=\{&a\in \mathcal{X}_0\,\,|\,\,\left(\EE\left[\left\Vert a-H_{\ol{\params}}(x_{{\ol{\params}}0})\right\Vert^q\right]\right)^{\frac{1}{q}}\leq\left(\ol\alpha^{-1}\left(\ul\alpha\left(\varepsilon^q\right)\right)\right)^{\frac{1}{q}},~\forall x_{{\ol{\params}}0}\in X_{{\ol{\params}}0}\},
\end{align}
\end{small}then we have \mbox{$S_{\ol{\params}}(\Sigma)\cong_{\mathcal{S}}^{\varepsilon}S_\tau(\Sigma)$}.
\end{theorem}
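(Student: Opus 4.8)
The plan is to mirror the proof of Theorem \ref{main_theorem5}, upgrading the relation $R$ supplied by Theorem \ref{main_theorem3} to a full $\varepsilon$-approximate bisimilarity by checking only the two initial-state coverage requirements of Definition \ref{APSR}. Since Theorem \ref{main_theorem3} already certifies that
$R=\{(x_\tau,x_{\ol{\params}})\in X_\tau\times X_{\ol{\params}}\mid\EE[V(x_\tau,H_{\ol{\params}}(x_{\ol{\params}}))]\leq\ul\alpha(\varepsilon^q)\}$
is an $\varepsilon$-approximate bisimulation relation between $S_{\ol{\params}}(\Sigma)$ and $S_\tau(\Sigma)$, conditions (i) and (ii) of Definition \ref{APSR} hold for both $R$ and $R^{-1}$; what remains is to show that every initial state of each system is $R$-related to some initial state of the other, which is precisely what the prescribed choice of $X_{\tau0}$ is engineered to guarantee.

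First I would establish $S_\tau(\Sigma)\preceq^\varepsilon_\mathcal{S}S_{\ol{\params}}(\Sigma)$. Given any $x_{\tau0}\in X_{\tau0}$, pick any $x_{{\ol{\params}}0}\in X_{{\ol{\params}}0}$; by the defining bound of $X_{\tau0}$ one has $(\EE[\Vert x_{\tau0}-H_{\ol{\params}}(x_{{\ol{\params}}0})\Vert^q])^{1/q}\leq(\ol\alpha^{-1}(\ul\alpha(\varepsilon^q)))^{1/q}$ for this (indeed every) symbolic initial state. Using property (ii) of Definition \ref{delta_SGAS_Lya}, concavity of $\ol\alpha$ with Jensen's inequality \cite{oksendal}, and this bound, I would chain
\begin{align*}
\EE[V(x_{\tau0},H_{\ol{\params}}(x_{{\ol{\params}}0}))]&\leq\EE[\ol\alpha(\Vert x_{\tau0}-H_{\ol{\params}}(x_{{\ol{\params}}0})\Vert^q)]\leq\ol\alpha(\EE[\Vert x_{\tau0}-H_{\ol{\params}}(x_{{\ol{\params}}0})\Vert^q])\leq\ul\alpha(\varepsilon^q),
\end{align*}
whence $(x_{\tau0},x_{{\ol{\params}}0})\in R$. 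The reverse inclusion $S_{\ol{\params}}(\Sigma)\preceq^\varepsilon_\mathcal{S}S_\tau(\Sigma)$, equipped with $R^{-1}$, follows by the identical computation: for any $x_{{\ol{\params}}0}\in X_{{\ol{\params}}0}$, any $a\in X_{\tau0}$ is $R$-related to it, since by construction every element of $X_{\tau0}$ meets the $q$-th-moment bound against all of $X_{{\ol{\params}}0}$. The two inclusions together yield $S_{\ol{\params}}(\Sigma)\cong^\varepsilon_\mathcal{S}S_\tau(\Sigma)$.

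The main obstacle, and the sole genuine departure from the proof of Theorem \ref{main_theorem5}, is that here the output map $H_{\ol{\params}}(x_{{\ol{\params}}0})=\xi_{x_sx_{{\ol{\params}}0}}(N\tau)$ is a bona fide random variable measurable in $\sigalg_{N\tau}$, rather than the Dirac-valued, $\sigalg_0$-measurable point $\ol{H}_{\ol{\params}}(x_{{\ol{\params}}0})$ used there. Consequently one cannot simply take $x_{\tau0}=H_{\ol{\params}}(x_{{\ol{\params}}0})$ to achieve distance zero, and the separation between an admissible ($\sigalg_0$-measurable) initial state and the symbolic output must be gauged in the $q$-th moment metric $\mathbf{d}$; this is exactly why $X_{\tau0}$ is cut out by a $q$-th-moment bound and why the concavity of $\ol\alpha$ (through Jensen) is indispensable in pushing the expectation inside $\ol\alpha$ above. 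I would also note briefly that the $\forall x_{{\ol{\params}}0}$ quantifier in the definition of $X_{\tau0}$ is what makes the coverage succeed simultaneously in both directions, and that $X_{\tau0}$ is nonempty whenever the outputs $\{H_{\ol{\params}}(x_{{\ol{\params}}0})\}_{x_{{\ol{\params}}0}\in X_{{\ol{\params}}0}}$ lie within a common $q$-th-moment ball of the prescribed radius $(\ol\alpha^{-1}(\ul\alpha(\varepsilon^q)))^{1/q}$.
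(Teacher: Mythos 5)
Your proposal is correct and follows exactly the route the paper intends: the paper's proof of this theorem is simply ``similar to the one of Theorem \ref{main_theorem5},'' i.e., verify the two initial-state coverage conditions for the relation $R$ supplied by Theorem \ref{main_theorem3}, which is what you do. Your elaboration of the one genuine adaptation --- that $H_{\ol{\params}}(x_{{\ol{\params}}0})$ is now a true random variable, so the pointwise bound $V\leq\ol\alpha(\Vert\cdot\Vert^q)$ must be combined with Jensen's inequality for the concave $\ol\alpha$ before invoking the $q$th-moment bound defining $X_{\tau0}$ --- is precisely the content the paper leaves implicit.
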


\begin{proof}
The proof is similar to the one of Theorem \ref{main_theorem5}.
\end{proof}

\subsubsection{Multiple Lyapunov functions}
Here, we provide results on the construction of symbolic models for $\Sigma_{\tau_d}$ without any continuous space discretization.
Consider a stochastic switched system $\Sigma_{\tau_d}$ and a triple $\ol{\mathsf{q}}=\left(\tau,N,x_s\right)$ of parameters.
Given $\Sigma_{\tau_d}$ and $\ol{\mathsf{q}}$, consider the following systems:
\begin{small}
\begin{align}\notag
S_{\ol{\mathsf{q}}}(\Sigma_{\tau_d})&=(X_{\ol{\mathsf{q}}},X_{\ol{\mathsf{q}}0},U_{\ol{\mathsf{q}}},\rTo_{\ol{\mathsf{q}}},Y_{\ol{\mathsf{q}}},H_{\ol{\mathsf{q}}}),\\\notag\ol{S}_{\ol{\mathsf{q}}}(\Sigma_{\tau_d})&=(X_{\ol{\mathsf{q}}},X_{\ol{\mathsf{q}}0},U_{\ol{\mathsf{q}}},\rTo_{\ol{\mathsf{q}}},Y_{\ol{\mathsf{q}}},\ol{H}_{\ol{\mathsf{q}}}),
\end{align}
\end{small}consisting of: $X_{\ol{\mathsf{q}}}=\mathsf{P}^{N}\times\{0,\ldots,\widehat{N}-1\}$, $U_{\ol{\mathsf{q}}}=\mathsf{P}$, $Y_{\ol{\mathsf{q}}}=Y_\tau$, and
\begin{small}
\begin{itemize}
\item \begin{small}
\begin{itemize}
\item if $N\leq\widehat{N}-1$: $X_{{\ol{\params}}0}=\left\{\left(p,\ldots,p,N\right)\,\,|\,\,\forall p\in\mathsf{P}\right\}$; 
\item if $N>\widehat{N}-1$: $X_{{\ol{\params}}0}=\{(\overbrace{p_1,\ldots,p_1}^{m_1~\text{times}},\dots,\overbrace{p_k,\ldots,p_k}^{m_k~\text{times}},i)|~~\exists k\in\N~~\text{s.t.}~~ m_1,\ldots,m_{k-1}\geq{\widehat{N}},~i=\min\{m_k-1,\widehat{N}-1\},~p_1,\ldots,p_k\in\mathsf{P}\}$;
\end{itemize}\end{small}
\item $\left(p_1,p_2,\ldots,p_N,i\right)\rTo_{\ol{\mathsf{q}}}^{p_N}\left(p_2,\ldots,p_N,p,i'\right)$ if one of the following holds:

\begin{itemize}
\item $i<\widehat{N}-1$, $p=p_N$, and $i'=i+1$;
\item $i=\widehat{N}-1$, $p=p_N$, and $i'=\widehat{N}-1$;
\item $i=\widehat{N}-1$, $p\neq p_N$, and $i'=0$;
\end{itemize}
\item $H_{\ol{\mathsf{q}}}(x_{\ol{\params}},i)=\xi_{x_sx_{\ol{\params}}}(N\tau)$ $\left(\ol{H}_{\ol{\mathsf{q}}}(x_{\ol{\params}},i)=\ol\xi_{x_sx_{\ol{\params}}}(N\tau)\right)$ for any $(x_{\ol{\params}},i)\in X_{\ol{\params}}$, where $x_{\ol{\params}}=\left(p_1,\ldots,p_N\right)$.
\end{itemize}
\end{small}

Notice that the proposed system $S_{\ol{\params}}(\Sigma_{\tau_d})$ $\left(\text{resp.}~\ol{S}_{{\ol{\params}}}(\Sigma_{\tau_d})\right)$ is symbolic and deterministic in the sense of Definition \ref{system}. Note that the set $X_{{\ol{\params}}0}$ is chosen in such a way that it respects the dwell time of switching signals (i.e. being in each mode at least $\tau_d=\widehat{N}\tau$ seconds).

Before providing the second main result of this subsection, we need the following technical results, similar to the ones in Lemmas \ref{lemma1} and \ref{lemma4}.

\begin{lemma}\label{lemma11}
Consider a stochastic switched system $\Sigma_{\tau_d}$, admitting multiple $\delta$-GAS-M$_q$ Lyapunov functions $V_p$, and consider its corresponding symbolic model $\ol{S}_{\ol{\params}}(\Sigma_{\tau_d})$. Moreover, assume that (\ref{eq0}) holds for some $\mu\geq1$. If $\tau_d>\log{\mu}/\kappa$, then we have:
\begin{small}
\begin{align}
\label{upper_bound5}
\ol\eta\leq&\left(\ul\alpha^{-1}(\mathsf{e}^{-(\kappa-\log\mu/\tau_d) N\tau}\max_{p,p'\in \mathsf{P}}V_{p'}(\ol\xi_{x_sp}(\tau),x_s))\right)^{1/q},
\end{align}
\end{small}
where
\begin{small}
\begin{align}\label{eta2}
\ol\eta\Let\max_{\substack{(x_{\ol{\params}},i)\in X_{\ol{\params}}\\(x_{\ol{\params}},i)\rTo^p_{\ol{\params}}(x'_{\ol{\params}},i')}}\Vert\ol\xi_{\ol{H}_{\ol{\params}}(x_{\ol{\params}},i)p}(\tau)-\ol{H}_{\ol{\params}}\left(x'_{\ol{\params}},i'\right)\Vert.
\end{align}
\end{small}
\end{lemma}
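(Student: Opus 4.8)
The plan is to adapt the argument of Lemma \ref{lemma1} to the multiple-Lyapunov-function setting, replacing the single exponential decrease estimate by the mode-switching estimate developed in the proof of Theorem \ref{multiple_lyapunov}. Fix a transition $(x_{\ol{\params}},i)\rTo^{p}_{\ol{\params}}(x'_{\ol{\params}},i')$ attaining (or approaching) the maximum in \eqref{eta2}, with $x_{\ol{\params}}=(p_1,\ldots,p_N)$ and $x'_{\ol{\params}}=(p_2,\ldots,p_N,p)$, and write $\upsilon=(p_2,\ldots,p_N,p)$ for the associated switching signal on $[0,N\tau]$.

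First I would rewrite the two terms in \eqref{eta2} as deterministic trajectories driven by the \emph{same} signal $\upsilon$ issued from two different initial conditions. Exactly as in Lemma \ref{lemma1}, the flow property of the noise-free dynamics $\ol\xi$ gives $\ol\xi_{\ol{H}_{\ol{\params}}(x_{\ol{\params}},i)\,p}(\tau)=\ol\xi_{\ol\xi_{x_sp_1}(\tau)\,\upsilon}(N\tau)$ and $\ol{H}_{\ol{\params}}(x'_{\ol{\params}},i')=\ol\xi_{x_s\upsilon}(N\tau)$; the first identity follows by peeling off the initial $\tau$-segment (under $p_1$) of the trajectory of length $(N+1)\tau$ under $(p_1,\upsilon)$ started at $x_s$.

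Next I would run the estimate from the proof of Theorem \ref{multiple_lyapunov} along $\upsilon$, now for the deterministic flow, so that all expectations are trivial and the generator inequality (iii) reduces to $\frac{d}{dt}V_{p_j}(\ol\xi_{a\upsilon}(t),\ol\xi_{a'\upsilon}(t))\le-\kappa V_{p_j}(\cdots)$ on each mode interval. With $a=\ol\xi_{x_sp_1}(\tau)$, $a'=x_s$, and $j$ the number of switches of $\upsilon$ on $[0,N\tau]$ (so the first active mode is $p_2$ and the mode active at $N\tau$ is $p$), the same induction together with the jump condition \eqref{eq0} yields $V_{p}(\ol\xi_{a\upsilon}(N\tau),\ol\xi_{a'\upsilon}(N\tau))\le\mu^{j}\mathsf{e}^{-\kappa N\tau}V_{p_2}(a,a')$. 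Since $\upsilon$ is admissible for the dwell-time automaton tracked by the counter $i$, consecutive switches are at least $\tau_d=\widehat{N}\tau$ apart, whence $j\tau_d\le N\tau$ and $\mu^{j}\le\mathsf{e}^{(\log\mu/\tau_d)N\tau}$; thus $\mu^{j}\mathsf{e}^{-\kappa N\tau}\le\mathsf{e}^{-(\kappa-\log\mu/\tau_d)N\tau}$.

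Finally I would convert the Lyapunov bound into the Euclidean bound of \eqref{upper_bound5}. Since $\ul\alpha=\min_p\ul\alpha_p$, property (ii) gives $\ul\alpha(\Vert z-z'\Vert^q)\le\ul\alpha_{p}(\Vert z-z'\Vert^q)\le V_{p}(z,z')$, so applying the monotone map $\ul\alpha^{-1}$ and taking $q$-th roots produces, for this transition, $\Vert\ol\xi_{\ol{H}_{\ol{\params}}(x_{\ol{\params}},i)p}(\tau)-\ol{H}_{\ol{\params}}(x'_{\ol{\params}},i')\Vert\le(\ul\alpha^{-1}(\mathsf{e}^{-(\kappa-\log\mu/\tau_d)N\tau}V_{p_2}(\ol\xi_{x_sp_1}(\tau),x_s)))^{1/q}$. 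Maximizing over all admissible transitions lets $(p_1,p_2)$ range over $\mathsf{P}\times\mathsf{P}$, and since $\ul\alpha^{-1}$ and the exponential prefactor are monotone the maximum passes under them, giving $\max_{p,p'\in\mathsf{P}}V_{p'}(\ol\xi_{x_sp}(\tau),x_s)$ and hence \eqref{upper_bound5}. I expect the main obstacle to lie in the dwell-time bookkeeping of the third step: one must verify that the shifted signal $\upsilon$ inherits the minimum inter-switch spacing $\tau_d$ from the counter component $i$ and from the construction of $X_{\ol{\params}0}$, so that the switch-count bound $j\tau_d\le N\tau$—and therewith the improved decay rate $\kappa-\log\mu/\tau_d>0$—is legitimate.
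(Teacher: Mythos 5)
Your plan is exactly the route the paper intends: its own proof of Lemma \ref{lemma11} is a one-line reference to Lemma \ref{lemma1}, and your three steps (peel off the first $\tau$-segment by the flow property, run the multiple-Lyapunov decay of Theorem \ref{multiple_lyapunov} along the common shifted signal, convert through $\ul\alpha$ and maximize over $(p_1,p_2)$) are the faithful adaptation of that argument. You also put your finger on the right place: the dwell-time bookkeeping is indeed the crux. The problem is that the resolution you assert there is incorrect, and this is a genuine gap. The shifted signal $\upsilon=(p_2,\ldots,p_N,p)$ inherits from the counter only that \emph{consecutive} switches inside the window are at least $\tau_d=\widehat{N}\tau$ apart; it does \emph{not} inherit the condition that its \emph{first} switch occurs at least $\tau_d$ after time $0$, because the first block visible in the window may be truncated (its dwell interval began before the window, in the discarded $p_1$-segment or earlier). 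Hence your inference ``consecutive switches are at least $\tau_d$ apart, whence $j\tau_d\le N\tau$'' is a non sequitur: one only gets $t_j\ge t_1+(j-1)\tau_d$ with $t_1\ge\tau$, i.e. $j\le 1+(N-2)/\widehat{N}$, which exceeds $N/\widehat{N}$ whenever $\widehat{N}\ge 3$.

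Concretely, take $\widehat{N}=3$ and $N=5$. The state $\left((A,A,B,B,B),2\right)$ is reachable from the initial state $\left((A,A,A,A,A),2\right)$ (switch to $B$, then two forced non-switching steps), and since its counter equals $\widehat{N}-1$ it admits the switching transition that appends $C\neq B$. The associated shifted signal is $(A,B,B,B,C)$, with switches at relative times $\tau$ and $4\tau$: the spacing $3\tau=\tau_d$ is respected, but $j=2>N\tau/\tau_d=5/3$. Your estimate then gives only $\mu^{2}\mathsf{e}^{-5\kappa\tau}V_{A}\left(\ol\xi_{x_sA}(\tau),x_s\right)$, while \eqref{upper_bound5} requires $\mu^{5/3}\mathsf{e}^{-5\kappa\tau}\max_{p,p'}V_{p'}\left(\ol\xi_{x_sp}(\tau),x_s\right)$, and the outer maximum provides no slack in the worst case; the constant in \eqref{upper_bound5} is therefore not recovered (the argument is off by up to a factor $\mu^{1-2/\widehat{N}}$ inside $\ul\alpha^{-1}$). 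Since the paper's proof is silent, the same defect is arguably latent there as well. A secondary issue: you tacitly read the transition label as the appended mode, as in Lemma \ref{lemma1}; but the paper's $\ol{S}_{\ol{\params}}(\Sigma_{\tau_d})$ labels transitions by $p_N$ and appends a possibly different mode, so under the literal definition a switching transition makes \eqref{eta2} compare two trajectories driven by \emph{different} modes on the final interval, where incremental stability gives no bound at all — your (sensible) reading silently repairs this, but it should be stated. To make the proof sound one must either restrict the maximum in \eqref{eta2} to transitions whose shifted window genuinely belongs to $\mathcal{P}_{\tau_d}$, or accept an extra factor $\mu$ inside $\ul\alpha^{-1}$ in \eqref{upper_bound5}.
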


The proof is similar to the proof of Lemma \ref{lemma1}.

The next lemma provides a similar result as the one of Lemma \ref{lemma11}, but by using the symbolic model $S_{\ol{\params}}(\Sigma_{\tau_d})$ rather than $\ol{S}_{\ol{\params}}(\Sigma_{\tau_d})$.
\begin{lemma}\label{lemma44}
Consider a stochastic switched system $\Sigma_{\tau_d}$, admitting multiple $\delta$-GAS-M$_q$ Lyapunov functions $V_p$, and consider its corresponding symbolic model ${S}_{\ol{\params}}(\Sigma_{\tau_d})$. Moreover, assume that (\ref{eq0}) holds for some $\mu\geq1$. If $\tau_d>\log{\mu}/\kappa$, then we have:
\begin{small}
\begin{align}\label{upper_bound6}
\widehat\eta\leq&(\ul\alpha^{-1}(\mathsf{e}^{-(\kappa-\log\mu/\tau_d) N\tau}\max_{p,p'\in\mathsf{P}}\EE[V_{p'}\left(\xi_{x_sp}(\tau),x_s\right)]))^{\frac{1}{q}},
\end{align}
\end{small}
where
\begin{small}
\begin{align}\label{eta3}
\widehat\eta\Let\max_{\substack{(x_{\ol{\params}},i)\in X_{\ol{\params}}\\(x_{\ol{\params}},i)\rTo^p_{\ol{\params}}(x'_{\ol{\params}},i')}}\EE[\Vert\xi_{H_{\ol{\params}}(x_{\ol{\params}},i)p}(\tau)-H_{\ol{\params}}\left(x'_{\ol{\params}},i'\right)\Vert].
\end{align}
\end{small}
\end{lemma}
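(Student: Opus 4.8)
The plan is to import the reasoning of Lemma \ref{lemma11} and to interpose Jensen's inequality together with the convexity of the $\ul\alpha_p$'s, so as to pass from the deterministic estimate for $\ol\eta$ to the first-moment estimate for $\widehat\eta$, while replacing the common-Lyapunov decay by the dwell-time decay furnished by the proof of Theorem \ref{multiple_lyapunov}.

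First I would fix an arbitrary transition $(x_{\ol{\params}},i)\rTo^{p}_{\ol{\params}}(x'_{\ol{\params}},i')$ of $S_{\ol{\params}}(\Sigma_{\tau_d})$, with $x_{\ol{\params}}=(p_1,\ldots,p_N)$ and hence $x'_{\ol{\params}}=(p_2,\ldots,p_N,p)$. Exactly as in the proof of Lemma \ref{lemma1}, the time-homogeneity of the switched SDE and the flow (cocycle) property of its solution process let me rewrite the two random variables appearing in \eqref{eta3} as two solutions of the \emph{same} switched SDE, driven by the same Brownian motion under the common signal $(p_2,\ldots,p_N,p)$, but issued from the two initial conditions $\xi_{x_sp_1}(\tau)$ and $x_s$:
$$\xi_{H_{\ol{\params}}(x_{\ol{\params}},i)p}(\tau)=\xi_{\xi_{x_sp_1}(\tau)(p_2,\ldots,p_N,p)}(N\tau),\qquad H_{\ol{\params}}(x'_{\ol{\params}},i')=\xi_{x_s(p_2,\ldots,p_N,p)}(N\tau).$$

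Writing $Z$ for the difference of these two random variables, I would then chain the inequalities as follows. Since $q\ge1$, Jensen's inequality gives $\EE[\Vert Z\Vert]\le(\EE[\Vert Z\Vert^q])^{1/q}$. Next, using $\ul\alpha=\min_{p}\ul\alpha_p\le\ul\alpha_p$, the convexity of $\ul\alpha_p$ with Jensen's inequality, and the lower bound (ii) of Definition \ref{delta_SGAS_Lya} with the mode $p$ active at the terminal time $N\tau$ of the signal $(p_2,\ldots,p_N,p)$, I obtain $\ul\alpha(\EE[\Vert Z\Vert^q])\le\ul\alpha_p(\EE[\Vert Z\Vert^q])\le\EE[\ul\alpha_p(\Vert Z\Vert^q)]\le\EE[V_p(\xi_{\xi_{x_sp_1}(\tau)(p_2,\ldots,p_N,p)}(N\tau),\xi_{x_s(p_2,\ldots,p_N,p)}(N\tau))]$. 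Because the counter component $i$ and the structure of $X_{\ol{\params}0}$ only encode switching signals respecting the dwell time $\tau_d=\widehat{N}\tau$, the decay estimate established in the proof of Theorem \ref{multiple_lyapunov} applies to this finite segment and yields $\EE[V_p(\cdots)]\le\mathsf{e}^{-(\kappa-\log\mu/\tau_d)N\tau}\EE[V_{p_2}(\xi_{x_sp_1}(\tau),x_s)]$. Inverting the increasing function $\ul\alpha$, applying $(\cdot)^{1/q}$, and finally taking the maximum in \eqref{eta3} over all transitions (so that $p_1,p_2$ range over $\mathsf{P}$ and are absorbed into $\max_{p,p'\in\mathsf{P}}\EE[V_{p'}(\xi_{x_sp}(\tau),x_s)]$) delivers the asserted bound \eqref{upper_bound6}.

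The step I expect to be the main obstacle is the stochastic rewriting of the second paragraph: one must justify rigorously that the genuine noise-driven solution process satisfies the flow identity above, and that the incremental decay of Theorem \ref{multiple_lyapunov}, which is stated for initial data measurable in $\sigalg_0$, may be invoked with the $\sigalg_\tau$-measurable initial datum $\xi_{x_sp_1}(\tau)$. This is handled by time-homogeneity of the SDE together with the fact that $s\mapsto W_{s+\tau}-W_\tau$ is again a Brownian motion independent of $\sigalg_\tau$, so that the estimate is filtration-agnostic and transfers unchanged; one must additionally check that, when the segment begins in the interior of a mode, the counting bound $\mu^{\#\mathrm{switches}}\le\mathsf{e}^{(\log\mu/\tau_d)N\tau}$ used in Theorem \ref{multiple_lyapunov} remains valid, which is precisely what the dwell-time-respecting choice of $X_{\ol{\params}0}$ guarantees. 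All remaining manipulations are identical to those of Lemmas \ref{lemma1}, \ref{lemma4}, and \ref{lemma11}.
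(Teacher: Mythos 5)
Your proposal is correct and takes essentially the same route as the paper: the paper's proof of this lemma is a one-line reference to Lemma \ref{lemma4}, which itself reduces to the computation of Lemma \ref{lemma1} (in the dwell-time form of Lemma \ref{lemma11}) augmented by the convexity of the $\ul\alpha_p$ and Jensen's inequality --- i.e., exactly your cocycle rewriting, the chain $\ul\alpha\le\ul\alpha_p$ plus Jensen plus property (ii) of Definition \ref{delta_SGAS_Lya}, followed by the dwell-time decay of Theorem \ref{multiple_lyapunov} and inversion of $\ul\alpha$. The stochastic subtleties you flag (the time-shifted Brownian coupling of the two output random variables, and the switch-counting over the length-$N\tau$ window including transitions that append a new mode) are genuine, but they are glossed over by the paper as well, so your attempt is at least as complete as the published argument.
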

The proof is similar to the proof of Lemma \ref{lemma4}.

Now, we present the second main result of this subsection, relating the existence of multiple Lyapunov functions to that of a bisimilar finite abstractions without any continuous space discretization. In order to show the next result, we assume that $f_p(0_n)=0_n$ only if $\Sigma_{\tau_d,p}$ is not affine and $g_p(0_n)=0_{n\times\widehat{q}}$ for any $p\in\mathsf{P}$.

\begin{theorem}\label{main_theorem33}
Consider a stochastic switched system $\Sigma_{\tau_d}$. Let us assume that for any $p\in\mathsf{P}$, there exists a $\delta$-GAS-M$_q$ Lyapunov function $V_p$, of the form of the one explained in Lemma \ref{lem:moment est}, for subsystem $\Sigma_{\tau_d,p}$. Moreover, assume that (\ref{eq0}) holds for some $\mu\geq1$. Let $\ol\eta$ be given by \eqref{eta2}. If $\tau_d>\log{\mu}/\kappa$, for any $\varepsilon\in\R^+$, 
and any triple $\ol{\mathsf{q}}=\left(\tau,N,x_s\right)$ of parameters satisfying
\begin{small}
\begin{align}
\label{bisim_cond_mul0}
\widehat\gamma(\left(h_{x_s}\left((N+1)\tau\right)\right)^{\frac{1}{q}}+\ol\eta)&\leq\frac{\frac{1}{\mu}-\mathsf{e}^{-\kappa\tau_d}}{1-\mathsf{e}^{-\kappa\tau_d}}(1-\mathsf{e}^{-\kappa\tau})\underline\alpha(\varepsilon^q),
\end{align}
\end{small}there exists an $\varepsilon$-approximate bisimulation relation $R$ between $\ol{S}_{\ol{\mathsf{q}}}(\Sigma_{\tau_d})$ and $S_{\tau}(\Sigma_{\tau_d})$ as the following:\\ $\left(x_{\tau},p_1,i_1,x_{{\ol{\params}}},i_2\right)\in R$, where $x_{\ol{\params}}=(\ol{p}_1,\ldots,\ol{p}_N)$, if and only if $p_1=\ol{p}_N=p$, $i_1=i_2=i$, and \begin{small}$$\mathbb{E}[V_p(H_{\tau}(x_{\tau},p_1,i_1),\ol{H}_{{\ol{\params}}}(x_{{\ol{\params}}},i_2))]=\mathbb{E}[V_p(x_{\tau},\ol\xi_{x_sx_{\ol{\params}}}(N\tau))]\leq\delta_i,$$\end{small}where $\delta_0,\ldots,\delta_{\widehat{N}-1}$ are given recursively by \begin{small}$\delta_{i+1}=\mathsf{e}^{-\kappa\tau}\delta_i+\widehat\gamma\left(\left(h_{x_s}\left((N+1)\tau\right)\right)^{\frac{1}{q}}+\ol\eta\right)$\end{small} and $\delta_0=\ul\alpha\left(\varepsilon^q\right)$.
\end{theorem}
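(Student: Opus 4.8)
The plan is to merge the dwell-time bookkeeping of the proof of Theorem~\ref{main_theorem2} with the discretization-free output map and deviation estimate used in the proof of Theorem~\ref{main_theorem}. First I would record the closed form of the recursion, $\delta_i=\mathsf{e}^{-i\kappa\tau}\ul\alpha(\varepsilon^q)+\widehat\gamma\left(\left(h_{x_s}((N+1)\tau)\right)^{1/q}+\ol\eta\right)\frac{1-\mathsf{e}^{-i\kappa\tau}}{1-\mathsf{e}^{-\kappa\tau}}$, and observe that \eqref{bisim_cond_mul0} together with $\mu\geq1$ (so that $\frac{1/\mu-\mathsf{e}^{-\kappa\tau_d}}{1-\mathsf{e}^{-\kappa\tau_d}}\leq1$) forces $\widehat\gamma\left(\left(h_{x_s}((N+1)\tau)\right)^{1/q}+\ol\eta\right)\leq(1-\mathsf{e}^{-\kappa\tau})\ul\alpha(\varepsilon^q)$; hence $\delta_0\geq\delta_1\geq\cdots\geq\delta_{\widehat{N}-1}$, and the one-step extension $\delta_{\widehat{N}}\Let\mathsf{e}^{-\kappa\tau}\delta_{\widehat{N}-1}+\widehat\gamma\left(\left(h_{x_s}((N+1)\tau)\right)^{1/q}+\ol\eta\right)$ satisfies, exactly as in \eqref{Ntozero}, $\delta_{\widehat{N}}\leq\ul\alpha(\varepsilon^q)/\mu=\delta_0/\mu$.

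Next I would verify condition (i) of Definition~\ref{APSR}. For any $(x_\tau,p,i,x_{\ol{\params}},i)\in R$, the convexity of $\ul\alpha_p$, Jensen's inequality \cite{oksendal}, property (ii) of $V_p$, and $\delta_i\leq\delta_0=\ul\alpha(\varepsilon^q)$ give $\ul\alpha\left(\EE\left[\|x_\tau-\ol{H}_{\ol{\params}}(x_{\ol{\params}},i)\|^q\right]\right)\leq\EE\left[V_p(x_\tau,\ol\xi_{x_sx_{\ol{\params}}}(N\tau))\right]\leq\delta_0$, whence $\left(\EE\left[\|x_\tau-\ol{H}_{\ol{\params}}(x_{\ol{\params}},i)\|^q\right]\right)^{1/q}\leq\varepsilon$ since $\ul\alpha\in\mathcal{K}_\infty$.

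For condition (ii), consider a transition $(x_\tau,p,i)\rTo^{p}_\tau(x'_\tau,p',i')$ with $x'_\tau=\xi_{x_\tau p}(\tau)$ $\PP$-a.s.. As in the proof of Theorem~\ref{theorem2}, $\EE\left[V_p(x'_\tau,\xi_{\ol{H}_{\ol{\params}}(x_{\ol{\params}})p}(\tau))\right]\leq\mathsf{e}^{-\kappa\tau}\delta_i$, while the matching shift transition $(x_{\ol{\params}},i)\rTo^{p}_{\ol{\params}}(x'_{\ol{\params}},i')$ exists by the definition of $\ol{S}_{\ol{\params}}(\Sigma_{\tau_d})$. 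Adding and subtracting $V_p(x'_\tau,\xi_{\ol{H}_{\ol{\params}}(x_{\ol{\params}})p}(\tau))$ and applying \eqref{supplement}, the concavity of $\widehat\gamma$, Jensen's inequality, the triangle inequality, Lemma~\ref{lem:moment est}, and the definition \eqref{eta2} of $\ol\eta$, I obtain $\EE\left[V_p(x'_\tau,\ol{H}_{\ol{\params}}(x'_{\ol{\params}},i'))\right]\leq\mathsf{e}^{-\kappa\tau}\delta_i+\widehat\gamma\left(\left(h_{x_s}((N+1)\tau)\right)^{1/q}+\ol\eta\right)=\delta_{i+1}$. I then split into the three dwell-time cases exactly as in Theorem~\ref{main_theorem2}: for $i<\widehat{N}-1$ the bound $\delta_{i+1}$ gives $(x'_\tau,p,i+1,x'_{\ol{\params}},i+1)\in R$; for $i=\widehat{N}-1$ with $p'=p$ one uses $\delta_{\widehat{N}}\leq\delta_{\widehat{N}-1}$; and for $i=\widehat{N}-1$ with $p'\neq p$ one invokes $\delta_{\widehat{N}}\leq\delta_0/\mu$ together with \eqref{eq0} to get $\EE\left[V_{p'}(x'_\tau,\ol{H}_{\ol{\params}}(x'_{\ol{\params}},0))\right]\leq\mu\,\delta_{\widehat{N}}\leq\delta_0$, so $(x'_\tau,p',0,x'_{\ol{\params}},0)\in R$. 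The reverse inclusion, namely that $R^{-1}$ is an $\varepsilon$-approximate simulation from $\ol{S}_{\ol{\params}}(\Sigma_{\tau_d})$ to $S_\tau(\Sigma_{\tau_d})$, is symmetric: from any symbolic transition the concrete successor $\xi_{x_\tau p}(\tau)$ always exists, and the identical Lyapunov estimate applies.

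I expect the main obstacle to be the deviation estimate $\left(\EE\left[\|\xi_{\ol{H}_{\ol{\params}}(x_{\ol{\params}})p}(\tau)-\ol\xi_{\ol{H}_{\ol{\params}}(x_{\ol{\params}})p}(\tau)\|^q\right]\right)^{1/q}\leq\left(h_{x_s}((N+1)\tau)\right)^{1/q}$, since Lemma~\ref{lem:moment est} is invoked at the \emph{deterministic} initial point $\ol{H}_{\ol{\params}}(x_{\ol{\params}})=\ol\xi_{x_sx_{\ol{\params}}}(N\tau)$; one must bound $\|\ol{H}_{\ol{\params}}(x_{\ol{\params}})\|$ via the deterministic $\delta$-GUAS-M$_q$ estimate for $\ol\Sigma_{\tau_d}$ and then control the resulting expression by the full-horizon quantity $h_{x_s}((N+1)\tau)$, precisely the passage already carried out in Theorem~\ref{main_theorem}. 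A secondary point is checking that the dwell-time data encoded in $X_{\ol{\params}0}$ and in the transition relation keep the counter $i$ synchronized between the two systems throughout the induction, which is guaranteed by the way $X_{\ol{\params}0}$ is defined to respect $\tau_d=\widehat{N}\tau$.
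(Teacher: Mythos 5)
Your proposal is correct and follows essentially the same route as the paper's own proof: the closed-form expression for $\delta_i$, the monotonicity $\delta_0\geq\cdots\geq\delta_{\widehat{N}}$ with $\delta_{\widehat{N}}\leq\ul\alpha(\varepsilon^q)/\mu$, condition (i) via convexity of $\ul\alpha_p$ and Jensen's inequality, condition (ii) via the Lyapunov decay estimate plus the deviation bound $\widehat\gamma\left(\left(h_{x_s}((N+1)\tau)\right)^{1/q}+\ol\eta\right)$ followed by the same three dwell-time cases, and the symmetric argument for $R^{-1}$. The deviation-estimate subtlety you flag (invoking Lemma \ref{lem:moment est} at the deterministic point $\ol\xi_{x_sx_{\ol{\params}}}(N\tau)$ and controlling it by the full-horizon quantity $h_{x_s}((N+1)\tau)$) is indeed present in the paper's proof as well, where it is absorbed without further comment, exactly as in Theorem \ref{main_theorem}.
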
 

\begin{proof}
Consider the relation $R\subseteq X_{\tau}\times X_{{\ol{\params}}}$ defined by $\left(x_{\tau},p_1,i_1,x_{{\ol{\params}}},i_2\right)\in R$, where $x_{\ol{\params}}=(\ol{p}_1,\ldots,\ol{p}_N)$, if and only if $p_1=\ol{p}_N=p$, $i_1=i_2=i$, and \begin{small}$$\mathbb{E}[V_p(H_{\tau}(x_{\tau},p_1,i_1),\ol{H}_{{\ol{\params}}}(x_{{\ol{\params}}},i_2))]=\mathbb{E}[V_p(x_{\tau},\ol\xi_{x_sx_{\ol{\params}}}(N\tau))]\leq\delta_i,$$\end{small}where $\delta_0,\ldots,\delta_{\widehat{N}}$ are given recursively by \begin{small}$$\delta_0=\ul\alpha\left(\varepsilon^q\right),~~\delta_{i+1}=\mathsf{e}^{-\kappa\tau}\delta_i+\widehat\gamma\left(\left(h_{x_s}\left((N+1)\tau\right)\right)^{\frac{1}{q}}+\eta\right).$$\end{small}One can easily verify that
\begin{small}
\begin{align}\label{delta_i}
\delta_i&=\mathsf{e}^{-i\kappa\tau}\ul\alpha(\varepsilon^q)+\widehat\gamma((h_{x_s}((N+1)\tau))^{\frac{1}{q}}+\eta)\frac{1-\mathsf{e}^{-i\kappa\tau}}{1-\mathsf{e}^{-\kappa\tau}}\\\notag&=\frac{\widehat\gamma((h_{x_s}((N+1)\tau))^{\frac{1}{q}}+\eta)}{1-\mathsf{e}^{-\kappa\tau}}+\mathsf{e}^{-i\kappa\tau}(\ul\alpha(\varepsilon^q)-\frac{\widehat\gamma((h_{x_s}((N+1)\tau))^{\frac{1}{q}}+\eta)}{1-\mathsf{e}^{-\kappa\tau}}).
\end{align}
\end{small}
Since $\mu\geq1$, and from (\ref{bisim_cond_mul0}), one has \begin{small}$$\widehat\gamma\left(\left(h_{x_s}\left((N+1)\tau\right)\right)^{\frac{1}{q}}+\eta\right)\leq(1-\mathsf{e}^{-\kappa\tau})\ul\alpha\left(\varepsilon^q\right).$$\end{small}It follows from (\ref{delta_i}) that $\delta_0\geq\delta_2\geq\cdots\geq\delta_{\widehat{N}-1}\geq\delta_{\widehat{N}}$. From (\ref{bisim_cond_mul0}) and since $\tau_d=\widehat{N}\tau$, we get 
\begin{small}
\begin{align}\label{Ntozero}
\delta_{\widehat{N}}=&\mathsf{e}^{-\kappa\tau_d}\ul\alpha(\varepsilon^q)+\widehat\gamma((h_{x_s}((N+1)\tau))^{\frac{1}{q}}+\eta)\frac{1-\mathsf{e}^{-\kappa\tau_d}}{1-\mathsf{e}^{-\kappa\tau}}\leq\mathsf{e}^{-\kappa\tau_d}\ul\alpha(\varepsilon^q)+(\frac{1}{\mu}-\mathsf{e}^{-\kappa\tau_d})\ul\alpha(\varepsilon^q)=\frac{\ul\alpha(\varepsilon^q)}{\mu}.
\end{align}
\end{small}We start by proving that $R$ is an $\varepsilon$-approximate simulation relation from $S_{\tau}(\Sigma_{\tau_d})$ to $\ol{S}_{{\ol{\params}}}(\Sigma_{\tau_d})$. Consider any \mbox{$\left(x_{\tau},p,i,x_{{\ol{\params}}},i\right)\in R$}. Using the convexity assumption of $\underline\alpha_p$, and since it is a $\mathcal{K}_\infty$ function, and the Jensen inequality \cite{oksendal}, we have:
\begin{small}
\begin{align}\nonumber
\ul\alpha(\mathbb{E}[\Vert H_\tau(x_{\tau},p,i)-H_{\ol{\params}}(x_{{\ol{\params}}},i)\Vert^q])&=\ul\alpha(\mathbb{E}[\Vert x_{\tau}-\ol\xi_{x_sx_{\ol{\params}}}(N\tau)\Vert^q])\leq\ul\alpha_p(\mathbb{E}[\Vert x_{\tau}-\ol\xi_{x_sx_{\ol{\params}}}(N\tau)\Vert^q])\leq\EE[\ul\alpha_p(\Vert x_{\tau}-\ol\xi_{x_sx_{\ol{\params}}}(N\tau)\Vert^q)]\\\notag&\leq\mathbb{E}[V_p(x_{\tau},\ol\xi_{x_sx_{\ol{\params}}}(N\tau))]\leq\delta_i\leq\delta_0.
\end{align}
\end{small}Therefore, we obtain
\begin{small}
$(\mathbb{E}[\Vert x_{\tau}-\ol\xi_{x_sx_{\ol{\params}}}(N\tau)\Vert^q])^{\frac{1}{q}}\leq(\underline\alpha^{-1}(\delta_0))^{\frac{1}{q}}\leq\varepsilon$,
\end{small}
because of $\ul\alpha\in\mathcal{K}_\infty$. Hence, condition (i) in Definition \ref{APSR} is satisfied.
Let us now show that condition (ii) in Definition
\ref{APSR} holds. 
Consider the transition $(x_{\tau},p,i)\rTo^{p}_{\tau} (x'_{\tau},p',i')$ in $S_{\tau}\left(\Sigma_{\tau_d}\right)$, where $x'_{\tau}=\xi_{x_{\tau}p}(\tau)$ $\PP$-a.s.. Since $V_p$ is a $\delta$-GAS-M$_q$ Lyapunov function for subsystem $\Sigma_p$, we have
\begin{small}
\begin{align}\label{b06}
\mathbb{E}[V_{p}(x'_{\tau},\xi_{\ol{H}_{\ol{\params}}(x_{{\ol{\params}}},i)p}(\tau))] &\leq \EE[V_{p}(x_\tau,\ol{H}_{\ol{\params}}(x_q,i))] \mathsf{e}^{-\kappa\tau}\leq\mathsf{e}^{-\kappa\tau}\delta_i.
\end{align}
\end{small}Using Lemma \ref{lem:moment est}, the $\mathcal{K}_\infty$ function $\widehat\gamma$, the concavity of $\widehat\gamma_p$ in \eqref{supplement}, the Jensen inequality \cite{oksendal}, equation \eqref{eta2}, the inequalities (\ref{supplement}) and (\ref{b06}), and triangle inequality, we obtain
\begin{small}
\begin{align}\nonumber
\mathbb{E}[V_{p}(x'_{\tau},\ol{H}_{\ol{\params}}(x'_{{\ol{\params}}},i'))]&=\mathbb{E}[V_{p}(x'_\tau,\xi_{\ol{H}_{\ol{\params}}(x_{{\ol{\params}}},i)p}(\tau))+V_{p}(x'_{\tau},\ol{H}_{\ol{\params}}(x'_{{\ol{\params}}},i'))-V_{p}(x'_\tau,\xi_{\ol{H}_{\ol{\params}}(x_{{\ol{\params}}},i)p}(\tau))]\\ \notag
&=  \mathbb{E}[V_{p}(x'_\tau,\xi_{\ol{H}_{\ol{\params}}(x_{{\ol{\params}}},i)p}(\tau))]+\mathbb{E}[V_{p}(x'_{\tau},\ol{H}_{\ol{\params}}(x'_{{\ol{\params}}},i'))-V_{p}(x'_\tau,\xi_{\ol{H}_{\ol{\params}}(x_{{\ol{\params}}},i)p}(\tau))]\\\notag&\leq\mathsf{e}^{-\kappa\tau}\delta_i+\mathbb{E}[\widehat\gamma_{p}(\Vert\xi_{\ol{H}_{\ol{\params}}(x_{{\ol{\params}}},i)p}(\tau)-\ol{H}_{\ol{\params}}(x'_{{\ol{\params}}},i')\Vert)]\leq\mathsf{e}^{-\kappa\tau}\delta_i+\widehat\gamma_{p}(\mathbb{E}[\Vert\xi_{\ol{H}_{\ol{\params}}(x_{{\ol{\params}}},i)p}(\tau)-\ol{H}_{\ol{\params}}(x'_{{\ol{\params}}},i')\Vert])\\\notag
&\leq\mathsf{e}^{-\kappa\tau}\delta_i+\widehat\gamma(\mathbb{E}[\Vert\xi_{\ol{H}_{\ol{\params}}(x_{{\ol{\params}}},i)p}(\tau)-\ol{\xi}_{\ol{H}_{\ol{\params}}(x_{{\ol{\params}}},i)p}(\tau)+\ol{\xi}_{\ol{H}_{\ol{\params}}(x_{{\ol{\params}}},i)p}(\tau)-\ol{H}_{\ol{\params}}(x'_{{\ol{\params}}},i')\Vert])\\\notag
&\leq\mathsf{e}^{-\kappa\tau}\delta_i+\widehat\gamma(\mathbb{E}[\Vert\xi_{\ol{H}_{\ol{\params}}(x_{{\ol{\params}}},i)p}(\tau)-\ol{\xi}_{\ol{H}_{\ol{\params}}(x_{{\ol{\params}}},i)p}(\tau)\Vert]+\Vert\ol{\xi}_{\ol{H}_{\ol{\params}}(x_{{\ol{\params}}},i)p}(\tau)-\ol{H}_{\ol{\params}}(x'_{{\ol{\params}}},i')\Vert)\\\label{b08}
&\leq\mathsf{e}^{-\kappa\tau}\delta_i+\widehat\gamma((h_{x_s}((N+1)\tau))^{\frac{1}{q}}+\eta)=\delta_{i+1}.
\end{align}
\end{small}
We now examine three separate cases:
\begin{itemize}
\item If $i<\widehat{N}-1$, then $p'=p$, and $i'=i+1$; from (\ref{b08}), \begin{small}$\EE\left[V_p\left(x'_{\tau},\ol{H}_{\ol{\params}}\left(x'_{{\ol{\params}}},i'\right)\right)\right]\leq\delta_{i+1}$\end{small}, we conclude that \begin{small}$(x'_{\tau},p,i+1,x'_{{\ol{\params}}},i+1)\in R$\end{small}; 

\item If $i=\widehat{N}-1$, and $p'=p$, then $i'=\widehat{N}-1$; from (\ref{b08}), \begin{small}$\EE\left[V_p\left(x'_{\tau},\ol{H}_{\ol{\params}}\left(x'_{{\ol{\params}}},i'\right)\right)\right]\leq\delta_{\widehat{N}}\leq\delta_{\widehat{N}-1}$\end{small}, we conclude that \begin{small}$(x'_{\tau},p,\widehat{N}-1,x'_{{\ol{\params}}},\widehat{N}-1)\in R$\end{small}; 

\item If $i=\widehat{N}-1$, and $p'\neq{p}$, then $i'=0$; from (\ref{Ntozero}) and (\ref{b08}), \begin{small}$\EE\left[V_p\left(x'_{\tau},\ol{H}_{\ol{\params}}\left(x'_{{\ol{\params}}},i'\right)\right)\right]\leq\delta_{\widehat{N}}\leq\delta_0/\mu$\end{small}. From (\ref{eq0}), it follows that \begin{small}$\EE\left[V_{p'}(x'_{\tau},\ol{H}_{\ol{\params}}\left(x'_{{\ol{\params}}},i'\right))\right]\leq\mu \EE\left[V_p\left(x'_{\tau},\ol{H}_{\ol{\params}}\left(x'_{{\ol{\params}}},i'\right)\right)\right]\leq\delta_0$\end{small}. Hence, \begin{small}$(x'_{\tau},p',0,x'_{{\ol{\params}}},0)\in R$\end{small}.
\end{itemize}
Therefore, we conclude that condition (ii) in Definition \ref{APSR} holds. In a similar way, we can prove that that $R^{-1}$ is an
$\varepsilon$-approximate simulation relation from $\ol{S}_{{\ol{\params}}}(\Sigma_{\tau_d})$ to $S_{\tau}(\Sigma_{\tau_d})$ implying that $R$ is an $\varepsilon$-approximate bisimulation relation between $\ol{S}_{{\ol{\params}}}(\Sigma_{\tau_d})$ and $S_\tau(\Sigma_{\tau_d})$.
\end{proof}

Note that one can use any over approximation of $\ol\eta$ such as the one in \eqref{upper_bound5} instead of $\ol\eta$ in condition \eqref{bisim_cond_mul0}. By choosing $N$ sufficiently large, one can enforce $h_{x_s}((N+1)\tau)$ and $\ol\eta$ to be sufficiently small. Hence, it can be readily seen that for a given precision $\varepsilon$,
there always exists a large value of $N$, such that the condition in (\ref{bisim_cond_mul0}) is satisfied.

The next theorem provides a result that is similar to the one of Theorem \ref{main_theorem33}, but by using the symbolic model $S_{{\ol{\params}}}(\Sigma_{\tau_d})$.

\begin{theorem}\label{main_theorem22}
Consider a stochastic switched system $\Sigma_{\tau_d}$. Let us assume that for any $p\in\mathsf{P}$, there exists a $\delta$-GAS-M$_q$ Lyapunov function $V_p$ for subsystem $\Sigma_{\tau_d,p}$. Moreover, assume that (\ref{eq0}) holds for some $\mu\geq1$. Let $\widehat\eta$ be given by \eqref{eta3}. If $\tau_d>\log{\mu}/\kappa$, for any $\varepsilon\in\R^+$, 
and any triple $\ol{\mathsf{q}}=\left(\tau,N,x_s\right)$ of parameters satisfying
\begin{small}
\begin{align}
\label{bisim_cond_mul10}
\widehat\gamma\left(\widehat\eta\right)&\leq\frac{\frac{1}{\mu}-\mathsf{e}^{-\kappa\tau_d}}{1-\mathsf{e}^{-\kappa\tau_d}}\left(1-\mathsf{e}^{-\kappa\tau}\right)\underline\alpha\left(\varepsilon^q\right),
\end{align}
\end{small}there exists an $\varepsilon$-approximate bisimulation relation $R$ between ${S}_{\ol{\mathsf{q}}}(\Sigma_{\tau_d})$ and $S_{\tau}(\Sigma_{\tau_d})$ as the following:\\ $\left(x_{\tau},p_1,i_1,x_{{\ol{\params}}},i_2\right)\in R$, where $x_{\ol{\params}}=(\ol{p}_1,\ldots,\ol{p}_N)$, if and only if $p_1=\ol{p}_N=p$, $i_1=i_2=i$, and \begin{small}$$\mathbb{E}[V_p(H_{\tau}(x_{\tau},p_1,i_1),{H}_{{\ol{\params}}}(x_{{\ol{\params}}},i_2))]=\mathbb{E}[V_p(x_{\tau},\xi_{x_sx_{\ol{\params}}}(N\tau))]\leq\delta_i,$$\end{small}where $\delta_0,\ldots,\delta_{\widehat{N}-1}$ are given recursively by \begin{small}$\delta_0=\ul\alpha\left(\varepsilon^q\right),~\delta_{i+1}=\mathsf{e}^{-\kappa\tau}\delta_i+\widehat\gamma\left(\widehat\eta\right)$\end{small}.
\end{theorem}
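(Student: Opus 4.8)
The plan is to follow the proof of Theorem \ref{main_theorem33} essentially verbatim, replacing the deterministic output map $\ol H_{\ol{\params}}$ by the stochastic one $H_{\ol{\params}}$ and the quantity $\ol\eta$ of \eqref{eta2} by $\widehat\eta$ of \eqref{eta3}. First I would record the closed form
\begin{small}
$$\delta_i=\frac{\widehat\gamma(\widehat\eta)}{1-\mathsf{e}^{-\kappa\tau}}+\mathsf{e}^{-i\kappa\tau}\left(\ul\alpha(\varepsilon^q)-\frac{\widehat\gamma(\widehat\eta)}{1-\mathsf{e}^{-\kappa\tau}}\right)$$
\end{small}
of the sequence defined in the statement. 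Using $\mu\geq1$ and \eqref{bisim_cond_mul10} this gives $\widehat\gamma(\widehat\eta)\leq(1-\mathsf{e}^{-\kappa\tau})\ul\alpha(\varepsilon^q)$, whence $\delta_0\geq\delta_1\geq\cdots\geq\delta_{\widehat N}$, and substituting $\tau_d=\widehat N\tau$ yields the key bound $\delta_{\widehat N}\leq\ul\alpha(\varepsilon^q)/\mu=\delta_0/\mu$. These steps are purely algebraic and identical to the corresponding ones in Theorem \ref{main_theorem33}.

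Next I would verify that $R$ is an $\varepsilon$-approximate simulation relation from $S_\tau(\Sigma_{\tau_d})$ to $S_{\ol{\params}}(\Sigma_{\tau_d})$. For condition (i) of Definition \ref{APSR}, given $(x_\tau,p,i,x_{\ol{\params}},i)\in R$ I would chain convexity of $\ul\alpha_p$, Jensen's inequality, property (ii) of $V_p$, and $\delta_i\leq\delta_0$ to obtain $\ul\alpha(\EE[\Vert x_\tau-\xi_{x_sx_{\ol{\params}}}(N\tau)\Vert^q])\leq\delta_0=\ul\alpha(\varepsilon^q)$, so the output distance is at most $\varepsilon$. For condition (ii), the deterministic shift structure of $S_{\ol{\params}}(\Sigma_{\tau_d})$ fixes the matching successor $(x'_{\ol{\params}},i')$, and the only real work is to bound $\EE[V_p(x'_\tau,H_{\ol{\params}}(x'_{\ol{\params}},i'))]$, where $x'_\tau=\xi_{x_\tau p}(\tau)$. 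Adding and subtracting $V_p(x'_\tau,\xi_{H_{\ol{\params}}(x_{\ol{\params}},i)p}(\tau))$, the first piece contracts by $\mathsf{e}^{-\kappa\tau}$ since $V_p$ is a $\delta$-GAS-M$_q$ Lyapunov function for $\Sigma_p$, exactly the one-step estimate obtained by Gronwall's inequality in the proof of Theorem \ref{theorem2}, giving $\EE[V_p(x'_\tau,\xi_{H_{\ol{\params}}(x_{\ol{\params}},i)p}(\tau))]\leq\mathsf{e}^{-\kappa\tau}\delta_i$; the second piece is controlled, via \eqref{supplement}, concavity of $\widehat\gamma_p$, and Jensen's inequality, by $\widehat\gamma(\EE[\Vert\xi_{H_{\ol{\params}}(x_{\ol{\params}},i)p}(\tau)-H_{\ol{\params}}(x'_{\ol{\params}},i')\Vert])\leq\widehat\gamma(\widehat\eta)$, the final bound being precisely the definition \eqref{eta3}. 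Summing yields $\EE[V_p(x'_\tau,H_{\ol{\params}}(x'_{\ol{\params}},i'))]\leq\delta_{i+1}$. The essential difference from Theorem \ref{main_theorem33} is that, because $H_{\ol{\params}}$ already returns the \emph{stochastic} trajectory, the single expectation $\widehat\eta$ absorbs the entire diffusion mismatch, so Lemma \ref{lem:moment est} and the $h_{x_s}$ correction do not enter the recursion at all.

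The step I expect to be the main obstacle is the dwell-time bookkeeping across the three successor cases. When $i<\widehat N-1$ we land at level $\delta_{i+1}$ keeping the same mode; when $i=\widehat N-1$ and $p'=p$ we invoke $\delta_{\widehat N}\leq\delta_{\widehat N-1}$ to stay admissible; and when $i=\widehat N-1$ and $p'\neq p$ the counter resets to $0$ and the active Lyapunov function switches from $V_p$ to $V_{p'}$, so here I must combine $\delta_{\widehat N}\leq\delta_0/\mu$ with \eqref{eq0} (i.e. $V_{p'}\leq\mu V_p$) to recover $\EE[V_{p'}(x'_\tau,H_{\ol{\params}}(x'_{\ol{\params}},i'))]\leq\mu\cdot\delta_0/\mu=\delta_0$ and hence membership in $R$. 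This mode-switching re-indexing is the only place where $\mu$ and the dwell-time hypothesis $\tau_d>\log\mu/\kappa$ are genuinely used, and it is where care is required. Finally, $R^{-1}$ is shown to be an $\varepsilon$-approximate simulation relation from $S_{\ol{\params}}(\Sigma_{\tau_d})$ to $S_\tau(\Sigma_{\tau_d})$ by the symmetric argument, determinism of both systems ensuring that each successor on one side has a unique counterpart on the other; together these give the claimed $\varepsilon$-approximate bisimulation.
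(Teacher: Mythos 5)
Your proposal is correct and follows essentially the same route as the paper: the paper's own proof of Theorem \ref{main_theorem22} consists of the single remark that it is analogous to the proof of Theorem \ref{main_theorem33}, and your write-up carries out precisely that analogy. In particular, you correctly identify the one substantive difference, namely that since $H_{\ol{\params}}$ outputs the stochastic trajectory $\xi_{x_sx_{\ol{\params}}}(N\tau)$, the quantity $\widehat\eta$ of \eqref{eta3} already absorbs the full diffusion mismatch, so Lemma \ref{lem:moment est} and the $h_{x_s}$ correction never enter the recursion --- which is exactly why condition \eqref{bisim_cond_mul10} lacks the $h$ term and why this theorem does not require $V_p$ to be of the form discussed in Lemma \ref{lem:moment est}.
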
 

\begin{proof}
The proof is similar to the one of Theorem \ref{main_theorem33}.
\end{proof}

Note that one can also use any over approximation of $\widehat\eta$ such as the one in \eqref{upper_bound6} instead of $\widehat\eta$ in condition \eqref{bisim_cond_mul10}. Finally, we establish the results on the existence of symbolic model $\ol{S}_{\ol{\params}}(\Sigma_{\tau_d})$ (resp. $S_{\ol{\params}}(\Sigma_{\tau_d})$) such that \mbox{$\ol{S}_{\ol{\params}}(\Sigma_{\tau_d})\cong_{\mathcal{S}}^{\varepsilon}S_\tau(\Sigma_{\tau_d})$} (resp. \mbox{$S_{\ol{\params}}(\Sigma_{\tau_d})\cong_{\mathcal{S}}^{\varepsilon}S_\tau(\Sigma_{\tau_d})$}).

\begin{theorem}\label{main_theorem55}
Consider the result in Theorem \ref{main_theorem33}. If we choose: \begin{small}
\begin{align}\notag
X_{\tau0}=\big\{&\left(x,p,i\right)\,\,|\,\,x\in\R^n,\left\Vert x-\ol{H}_{\ol{\params}}(x_{{\ol{\params}}0},i)\right\Vert\leq\left(\ol\alpha_p^{-1}\left(\delta_i\right)\right)^{\frac{1}{q}},p=p_N,\notag\forall (p_1,\ldots,p_N,i)\in X_{{\ol{\params}}0}\big\},
\end{align}
\end{small}then we have \mbox{$\ol{S}_{\ol{\params}}(\Sigma_{\tau_d})\cong_{\mathcal{S}}^{\varepsilon}S_\tau(\Sigma_{\tau_d})$}.
\end{theorem}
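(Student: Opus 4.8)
The plan is to follow the template of the proof of Theorem \ref{main_theorem5}, adapting it to the mode- and counter-dependent thresholds $\delta_i$. By Theorem \ref{main_theorem33} the relation $R$ is already an $\varepsilon$-approximate bisimulation relation between $\ol{S}_{\ol{\params}}(\Sigma_{\tau_d})$ and $S_\tau(\Sigma_{\tau_d})$, so conditions (i) and (ii) of Definition \ref{APSR} are in place. Hence, to upgrade this to the bisimilarity $\ol{S}_{\ol{\params}}(\Sigma_{\tau_d})\cong^\varepsilon_\mathcal{S}S_\tau(\Sigma_{\tau_d})$, it only remains to verify the two initial-state coverage conditions of Definition \ref{APSR}, and this is exactly what the particular choice of $X_{\tau0}$ is engineered to guarantee.

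First I would establish $S_\tau(\Sigma_{\tau_d})\preceq^\varepsilon_\mathcal{S}\ol{S}_{\ol{\params}}(\Sigma_{\tau_d})$. Fix any initial state $(x_{\tau0},p,i)\in X_{\tau0}$. By the definition of $X_{\tau0}$, there always exists an initial state $(x_{\ol{\params}0},i)\in X_{\ol{\params}0}$ with $x_{\ol{\params}0}=(p_1,\ldots,p_N)$, $p=p_N$, and $\Vert x_{\tau0}-\ol{H}_{\ol{\params}}(x_{\ol{\params}0},i)\Vert\leq(\ol\alpha_p^{-1}(\delta_i))^{1/q}$. Since both $x_{\tau0}$ and $\ol{H}_{\ol{\params}}(x_{\ol{\params}0},i)=\ol\xi_{x_sx_{\ol{\params}}}(N\tau)$ are non-random points, property (ii) of Definition \ref{delta_SGAS_Lya} applied to $V_p$ yields
\begin{small}
\begin{align*}
\EE[V_p(x_{\tau0},\ol{H}_{\ol{\params}}(x_{\ol{\params}0},i))]=V_p(x_{\tau0},\ol{H}_{\ol{\params}}(x_{\ol{\params}0},i))\leq\ol\alpha_p(\Vert x_{\tau0}-\ol{H}_{\ol{\params}}(x_{\ol{\params}0},i)\Vert^q)\leq\delta_i,
\end{align*}
\end{small}where the last step uses $\ol\alpha_p\in\mathcal{K}_\infty$ together with the norm bound. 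Therefore $(x_{\tau0},p,i,x_{\ol{\params}0},i)\in R$, which settles the first coverage condition.

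For the reverse direction I would show $\ol{S}_{\ol{\params}}(\Sigma_{\tau_d})\preceq^\varepsilon_\mathcal{S}S_\tau(\Sigma_{\tau_d})$ through $R^{-1}$. Given any $(x_{\ol{\params}0},i)\in X_{\ol{\params}0}$ with $x_{\ol{\params}0}=(p_1,\ldots,p_N)$, take $x_{\tau0}=\ol{H}_{\ol{\params}}(x_{\ol{\params}0},i)$, viewed as a Dirac random variable in $\mathcal{X}_0$, and $p=p_N$. Then $\Vert x_{\tau0}-\ol{H}_{\ol{\params}}(x_{\ol{\params}0},i)\Vert=0\leq(\ol\alpha_p^{-1}(\delta_i))^{1/q}$, so $(x_{\tau0},p,i)\in X_{\tau0}$, and $\EE[V_p(x_{\tau0},\ol{H}_{\ol{\params}}(x_{\ol{\params}0},i))]=0\leq\delta_i$ since $\delta_i\geq0$. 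Hence $(x_{\tau0},p,i,x_{\ol{\params}0},i)\in R$, giving the second coverage condition and completing the proof of $\ol{S}_{\ol{\params}}(\Sigma_{\tau_d})\cong^\varepsilon_\mathcal{S}S_\tau(\Sigma_{\tau_d})$.

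The bulk of the real work is already contained in Theorem \ref{main_theorem33}, so this argument is essentially routine; the only point requiring care is the index bookkeeping. Unlike the common-Lyapunov case of Theorem \ref{main_theorem5}, where a single threshold $\ol\alpha^{-1}(\ul\alpha(\varepsilon^q))$ suffices, here the admissible initial radius depends on both the active mode $p=p_N$ and the dwell counter $i$ through $\delta_i$ and the mode-dependent $\ol\alpha_p$. I expect the main (minor) obstacle to be checking that the $p$ and $i$ components of a state in $X_{\tau0}$ are matched to those of the corresponding element of $X_{\ol{\params}0}$ precisely as the relation $R$ of Theorem \ref{main_theorem33} demands, so that membership in $R$ is legitimately inferred in both directions.
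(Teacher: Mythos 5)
Your proposal is correct and follows essentially the same route as the paper: the simulation conditions are inherited from the relation $R$ of Theorem \ref{main_theorem33}, and the initial-state coverage is checked exactly as in the paper, via the definition of $X_{\tau0}$ together with the bound $\EE\left[V_p\left(x_{\tau0},\ol{H}_{\ol{\params}}(x_{{\ol{\params}}0},i)\right)\right]\leq\ol\alpha_p\left(\left\Vert x_{\tau0}-\ol{H}_{\ol{\params}}(x_{{\ol{\params}}0},i)\right\Vert^q\right)\leq\delta_i$. Your explicit Dirac-point construction for the converse coverage direction is precisely the argument the paper compresses into ``in a similar way, using $R^{-1}$.''
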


\begin{proof}
We start by proving that \mbox{$S_{\tau}(\Sigma_{\tau_d})\preceq^{\varepsilon}_\mathcal{S}\ol{S}_{{\ol{\params}}}(\Sigma_{\tau_d})$}. For every $\left(x_{\tau 0},p,i\right)\in{X_{\tau 0}}$, there always exists \mbox{$\left(x_{{\ol{\params}} 0},i\right)\in{X}_{{\ol{\params}} 0}$}, where $x_{{\ol{\params}}0}=(p_1,\ldots,p_N)$, such that $p=p_N$ and \begin{small}$\left\Vert{x_{\tau0}}-\ol{H}_{\ol{\params}}(x_{{\ol{\params}}0},i)\right\Vert\leq\left(\ol\alpha_p^{-1}\left(\delta_i\right)\right)^{\frac{1}{q}}$\end{small}. Then,
\begin{footnotesize}
\begin{align}\nonumber
\mathbb{E}\left[V_p\left({x_{\tau0}},\ol{H}_{\ol{\params}}(x_{{\ol{\params}}0},i)\right)\right]&=V_p\left({x_{\tau0}},\ol{H}_{\ol{\params}}(x_{{\ol{\params}}0},i)\right)\leq\overline\alpha_p(\Vert x_{\tau0}-\ol{H}_{\ol{\params}}(x_{{\ol{\params}}0},i)\Vert^q)\leq\delta_i,
\end{align}
\end{footnotesize}since $\overline\alpha_p$ is a $\mathcal{K}_\infty$ function.
Hence, \mbox{$\left(x_{\tau0},p,i,x_{{\ol{\params}}0},i\right)\in{R}$} implying that $S_{\tau}(\Sigma_{\tau_d})\preceq^{\varepsilon}_\mathcal{S}\ol{S}_{{\ol{\params}}}(\Sigma_{\tau_d})$. In a similar way, we can show that \mbox{$\ol{S}_{{\ol{\params}}}(\Sigma_{\tau_d})\preceq^{\varepsilon}_{\mathcal{S}}S_{\tau}(\Sigma_{\tau_d})$}, equipped with the relation $R^{-1}$, which completes the proof.
\end{proof}

The next theorem provides a similar result as the one of Theorem \ref{main_theorem55}, but by using the symbolic model $S_{\ol{\params}}(\Sigma)$.

\begin{theorem}\label{main_theorem66}
Consider the results in Theorem \ref{main_theorem22}. If we choose: \begin{small}
\begin{align}\notag
X_{\tau0}=\big\{&\left(a,p,i\right)\,\,|\,\,a\in\mathcal{X}_0,\left(\EE\left[\left\Vert a-H_{\ol{\params}}(x_{{\ol{\params}}0},i)\right\Vert^q\right]\right)^{\frac{1}{q}}\leq\left(\ol\alpha_p^{-1}\left(\delta_i\right)\right)^{\frac{1}{q}}, p=p_N, \forall (p_1,\ldots,p_N,i)\in X_{{\ol{\params}}0}\big\},
\end{align}
\end{small}then we have \mbox{$S_{\ol{\params}}(\Sigma_{\tau_d})\cong_{\mathcal{S}}^{\varepsilon}S_\tau(\Sigma_{\tau_d})$}.
\end{theorem}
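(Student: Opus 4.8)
The plan is to mirror the proof of Theorem \ref{main_theorem55} almost line for line, the only genuinely new ingredient being the treatment of the \emph{probabilistic} output map $H_{\ol{\params}}$. Theorem \ref{main_theorem22} has already certified that the relation $R$ (defined there through $H_{\ol{\params}}$ and the recursively defined thresholds $\delta_i$) is an $\varepsilon$-approximate bisimulation relation between $S_{\ol{\params}}(\Sigma_{\tau_d})$ and $S_\tau(\Sigma_{\tau_d})$; in particular conditions (i) and (ii) of Definition \ref{APSR} hold in both directions. Consequently, to upgrade $R$ to a bisimilarity it suffices to verify the two initial-state matching requirements of Definition \ref{APSR}, and this is exactly what the prescribed $X_{\tau0}$ is tailored to deliver.

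First I would establish $S_\tau(\Sigma_{\tau_d}) \preceq^{\varepsilon}_{\mathcal{S}} S_{\ol{\params}}(\Sigma_{\tau_d})$. Fix any $(x_{\tau0},p,i) \in X_{\tau0}$; by construction of $X_{\tau0}$ there is an accompanying symbolic initial state $(x_{{\ol{\params}}0},i) \in X_{{\ol{\params}}0}$, $x_{{\ol{\params}}0}=(p_1,\ldots,p_N)$, with $p=p_N$ and $\EE[\Vert x_{\tau0}-H_{\ol{\params}}(x_{{\ol{\params}}0},i)\Vert^q] \leq \ol\alpha_p^{-1}(\delta_i)$. Invoking property (ii) of Definition \ref{delta_SGAS_Lya} and then Jensen's inequality through the concave map $\ol\alpha_p$ gives
\begin{align*}
\EE[V_p(x_{\tau0},H_{\ol{\params}}(x_{{\ol{\params}}0},i))] &\leq \EE[\ol\alpha_p(\Vert x_{\tau0}-H_{\ol{\params}}(x_{{\ol{\params}}0},i)\Vert^q)] \leq \ol\alpha_p(\EE[\Vert x_{\tau0}-H_{\ol{\params}}(x_{{\ol{\params}}0},i)\Vert^q]) \leq \delta_i,
\end{align*}
so that $(x_{\tau0},p,i,x_{{\ol{\params}}0},i) \in R$, as needed for the first matching condition.

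The reverse containment $S_{\ol{\params}}(\Sigma_{\tau_d}) \preceq^{\varepsilon}_{\mathcal{S}} S_\tau(\Sigma_{\tau_d})$, worked through $R^{-1}$, asks for the dual matching: given any symbolic initial state $(x_{{\ol{\params}}0},i) \in X_{{\ol{\params}}0}$ one must produce $(x_{\tau0},p,i) \in X_{\tau0}$ with $(x_{\tau0},p,i,x_{{\ol{\params}}0},i) \in R$. Here I would take $x_{\tau0}$ to be a constant ($\sigalg_0$-measurable) random variable set equal to the noise-free endpoint $\ol\xi_{x_sx_{{\ol{\params}}0}}(N\tau)$ and $p=p_N$, so that $\EE[\Vert x_{\tau0}-H_{\ol{\params}}(x_{{\ol{\params}}0},i)\Vert^q]$ reduces to the $q$th-moment gap between the stochastic trajectory $\xi_{x_sx_{{\ol{\params}}0}}$ and its deterministic counterpart $\ol\xi_{x_sx_{{\ol{\params}}0}}$, which a version of Lemma \ref{lem:moment est} accumulated over the $N$-step switched horizon controls; membership in $X_{\tau0}$ and hence in $R$ then follows. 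Combining the two containments yields $S_{\ol{\params}}(\Sigma_{\tau_d}) \cong_{\mathcal{S}}^{\varepsilon} S_\tau(\Sigma_{\tau_d})$.

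The step that departs from Theorem \ref{main_theorem55} — and the one I expect to be the crux — is precisely this handling of the probabilistic output: because $H_{\ol{\params}}(x_{{\ol{\params}}0},i)$ is a genuine $\sigalg_{N\tau}$-measurable random variable rather than a Dirac mass at a grid point, one can no longer collapse $\EE[V_p(\cdot,\cdot)]$ to $V_p(\cdot,\cdot)$, so Jensen's inequality must be threaded through $\ol\alpha_p$ in the forward direction, and in the reverse direction the matched concrete initial condition must be exhibited as a $\sigalg_0$-measurable random variable whose $q$th-moment distance to the random output stays within the tolerance $\ol\alpha_p^{-1}(\delta_i)$. Everything else is a faithful transcription of the earlier argument.
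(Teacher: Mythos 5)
Your overall plan is the paper's plan: Theorem \ref{main_theorem22} already supplies the $\varepsilon$-approximate bisimulation relation $R$, so only the two initial-state matching requirements of Definition \ref{APSR} remain, and your forward direction handles its requirement exactly as the paper does. Picking the matching $(x_{{\ol{\params}}0},i)\in X_{{\ol{\params}}0}$ and chaining property (ii) of Definition \ref{delta_SGAS_Lya} with Jensen's inequality through the concave function $\ol\alpha_p$ to conclude $\EE\left[V_p\left(a,H_{\ol{\params}}(x_{{\ol{\params}}0},i)\right)\right]\leq\ol\alpha_p\left(\EE\left[\Vert a-H_{\ol{\params}}(x_{{\ol{\params}}0},i)\Vert^q\right]\right)\leq\delta_i$ is precisely the replacement for the pointwise estimate used in Theorem \ref{main_theorem55}; this part is correct.

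The genuine gap is in the reverse direction. You rightly observe that the distance-zero matching available in Theorem \ref{main_theorem55} is unavailable here: $H_{\ol{\params}}(x_{{\ol{\params}}0},i)=\xi_{x_sx_{{\ol{\params}}0}}(N\tau)$ is a nondegenerate $\sigalg_{N\tau}$-measurable random variable, while concrete initial states must lie in $\mathcal{X}_0$, i.e.\ be $\sigalg_0$-measurable. Your substitute $a=\ol\xi_{x_sx_{{\ol{\params}}0}}(N\tau)$ is the natural candidate, but the inequality it must satisfy, $\EE\left[\Vert \ol\xi_{x_sx_{{\ol{\params}}0}}(N\tau)-\xi_{x_sx_{{\ol{\params}}0}}(N\tau)\Vert^q\right]\leq\ol\alpha_{p_N}^{-1}(\delta_i)$, is asserted rather than proved, and it does not follow from the hypotheses you are given. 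Two pieces are missing. First, Lemma \ref{lem:moment est} bounds the stochastic-versus-deterministic gap for a \emph{single} mode started at a deterministic point; covering the $N$-step switched horizon requires an induction that propagates the per-step error through the contraction $\mathsf{e}^{-\kappa\tau}$ and the factor $\mu$ at every switching instant (finiteness of the resulting sum is exactly where $\tau_d>\log\mu/\kappa$ enters) --- a lemma stated nowhere in the paper. Second, even granting such an accumulated bound, you must show it lies below $\ol\alpha_{p_N}^{-1}(\delta_i)$ for every counter value $i$ occurring in $X_{{\ol{\params}}0}$, in particular below $\ol\alpha_{p_N}^{-1}(\delta_{\widehat{N}-1})$; but condition \eqref{bisim_cond_mul10} constrains only $\widehat\gamma(\widehat\eta)$, and $\widehat\eta$ in \eqref{eta3} is the one-step mismatch between the two \emph{probabilistic} outputs, a quantity the paper never relates to the deterministic-versus-stochastic gap $h$. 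So the step ``membership in $X_{\tau0}$ and hence in $R$ then follows'' is exactly the unjustified one. To be fair, the paper's own one-line proof (``similar to Theorem \ref{main_theorem55}'') glosses over the same point, but the statement you are proving is precisely where that similarity breaks down, so a complete proof must either supply this estimate or add a hypothesis guaranteeing that, for each $(x_{{\ol{\params}}0},i)\in X_{{\ol{\params}}0}$, the corresponding slice of $X_{\tau0}$ is nonempty.
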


\begin{proof}
The proof is similar to the one of Theorem \ref{main_theorem55}.
\end{proof}

\begin{remark}\label{remark1}
The symbolic model $S_{\ol{\params}}(\Sigma)$ (resp. $S_{\ol{\params}}(\Sigma_{\tau_d})$), computed by using the parameter $\ol{\mathsf{q}}$ provided in Theorem \ref{main_theorem3} (resp. Theorem \ref{main_theorem22}), has fewer (or at most equal number of) states than the symbolic model $\ol{S}_{\ol{\params}}(\Sigma)$ (resp. $\ol{S}_{\ol{\params}}(\Sigma_{\tau_d})$), computed by using the parameter ${\ol{\params}}$ provided in Theorem \ref{main_theorem} (resp. Theorem \ref{main_theorem33}) while having the same precision. However, the symbolic models $S_{\ol{\params}}(\Sigma)$ and $S_{\ol{\params}}(\Sigma_{\tau_d})$ have states with probabilistic output values, rather than non-probabilistic ones which makes the control synthesis over them more involved.
\end{remark}

\begin{remark}
The control synthesis over $\ol{S}_{\ol{\params}}(\Sigma)$ (resp. $\ol{S}_{\ol{\params}}(\Sigma_{\tau_d})$) is simple as the outputs are non-probabilistic points. For ${S}_{\ol{\params}}(\Sigma)$ (resp. ${S}_{\ol{\params}}(\Sigma_{\tau_d})$) it is less intuitive and more involved. We refer the interested readers to \cite[Subsection 5.3]{majid10} explaining how one can synthesize controllers over finite metric systems with random output values.
\end{remark}

\subsection{Comparison between the two proposed approaches}

Note that given any precision $\varepsilon$ and sampling time $\tau$, one can always use the results proposed in Theorems \ref{main_theorem5} and \ref{main_theorem55} to construct symbolic models $\ol{S}_\params(\Sigma)$ and $\ol{S}_\params(\Sigma_{\tau_d})$, respectively, that are $\varepsilon$-approximately bisimilar to $S_\tau(\Sigma)$ and $S_\tau(\Sigma_{\tau_d})$, respectively. However, the results proposed in Theorems \ref{main_theorem1} and \ref{main_theorem2} cannot be applied for any sampling time $\tau$ if the precision $\varepsilon$ is lower than the thresholds introduced in inequalities \eqref{lower_bound} and \eqref{lower_bound_mul}, respectively (cf. the first case study). Furthermore, while the results in Theorems \ref{main_theorem1} and \ref{main_theorem2} only provide symbolic models with non-probabilistic output values, the ones in Theorems \ref{main_theorem6} and \ref{main_theorem66} provide symbolic models with probabilistic output values as well which can result in less conservative symbolic models (cf. Remark \ref{remark1} and the first case study).

One can compare the results provided in Theorems \ref{main_theorem5} and \ref{main_theorem55} with the results provided in Theorems \ref{main_theorem1} and \ref{main_theorem2}, respectively, in terms of the sizes of the symbolic models. One can readily verify that the precision of the symbolic model $\ol{S}_{\ol{\params}}(\Sigma)$ (resp. $\ol{S}_{\ol{\params}}(\Sigma_{\tau_d})$) and the one $S_\params(\Sigma)$ (resp. $S_\params(\Sigma_{\tau_d})$) is approximately the same as long as the state space quantisation parameter $\eta$ is equal to the parameter $\ol\eta$ in \eqref{eta} (resp. in \eqref{eta2}), i.e. \begin{small}$\eta\leq\left(\ul\alpha^{-1}\left(\mathsf{e}^{-\kappa N\tau}\ol{\eta}_0\right)\right)^{1/q}$\end{small} (resp. \begin{small}$\eta\leq\left(\ul\alpha^{-1}\left(\mathsf{e}^{-(\kappa-\log\mu/{\tau_d})N\tau}\widehat\eta_0\right)\right)^{1/q}$\end{small}), where \begin{small}$\ol{\eta}_0=\max_{p\in\mathsf{P}}V\left(\ol\xi_{x_sp}(\tau),x_s\right)$\end{small} (resp. \begin{small}$\widehat\eta_0=\max_{p,p'\in\mathsf{P}}V_{p'}\left(\ol\xi_{x_sp}(\tau),x_s\right)$\end{small}). The reason their precisions are approximately (not exactly) the same is because we use $(h_{x_s}((N+1)\tau))^{1/q}$ in conditions \eqref{bisim_cond11} and \eqref{bisim_cond_mul0} rather than $(h_{[X_{\tau0}]_\eta}(\tau))^{1/q}$ (resp. $(h_{[X_0]_\eta}(\tau))^{1/q}$) that is being used in condition\eqref{bisim_cond} (resp.\eqref{bisim_cond_mul}). By assuming that $(h_{x_s}((N+1)\tau))^{1/q}$ and $(h_{[X_{\tau0}]_\eta}(\tau))^{1/q}$ (resp. $(h_{[X_0]_\eta}(\tau))^{1/q}$) are much smaller than $\overline{\eta}$ and $\eta$, respectively, or $h_{x_s}((N+1)\tau)\approx h_{[X_{\tau0}]_\eta}(\tau)$ (resp. $h_{x_s}((N+1)\tau)\approx h_{[X_0]_\eta}(\tau)$), the precisions are the same.

The number of states of the proposed symbolic models $\ol{S}_{\ol{\params}}(\Sigma)$ and $\ol{S}_{\ol{\params}}(\Sigma_{\tau_d})$ are $m^N$ and $m^{N}\times\widehat{N}$, respectively. Assume that we are interested in the dynamics of $\Sigma$ (resp. $\Sigma_{\tau_d}$) on a compact set $\mathsf{D}\subset\R^n$. Since the set of states of the proposed symbolic models $S_\params(\Sigma)$ and $S_\params(\Sigma_{\tau_d})$ are $\left[\mathsf{D}\right]_{\eta}$ and $\left[\mathsf{D}\right]_{\eta}\times\mathsf{P}\times\{0,\ldots,\widehat{N}-1\}$, respectively, their sizes are $\left\vert\left[\mathsf{D}\right]_{\eta}\right\vert=\frac{K}{\eta^n}$ and $\frac{K}{\eta^n}\times{m}\times\widehat{N}$, respectively, where $K$ is a positive constant proportional to the volume of $\mathsf{D}$. Hence, it is more convenient to use the proposed symbolic models $\ol{S}_{\ol{\params}}(\Sigma)$ and $\ol{S}_{\ol{\params}}(\Sigma_{\tau_d})$ rather than the ones $S_\params(\Sigma)$ and $S_\params(\Sigma_{\tau_d})$, respectively, as long as:
\begin{small}
\begin{align}\nonumber
&m^N\leq\frac{K}{\left(\ul\alpha^{-1}\left(\mathsf{e}^{-\kappa N\tau}\ol\eta_0\right)\right)^{n/q}}~~\text{and}~~m^{N-1}\leq\frac{K}{\left(\ul\alpha^{-1}\left(\mathsf{e}^{-(\kappa-\log\mu/{\tau_d}) N\tau}\widehat\eta_0\right)\right)^{n/q}},
\end{align}
\end{small}respectively. Without loss of generality, one can assume that $\ul\alpha({r})=r$ for any $r\in\R_0^+$. Hence, for sufficiently large value of $N$, it is more convenient to use the proposed symbolic models $\ol{S}_{\ol{\params}}(\Sigma)$ and $\ol{S}_{\ol{\params}}(\Sigma_{\tau_d})$ in comparison with the ones $S_\params(\Sigma)$ and $S_\params(\Sigma_{\tau_d})$, respectively, as long as:
\begin{small}
\begin{align}\label{criterion}
m\mathsf{e}^{\frac{-\kappa\tau n}{q}}\leq1,~~\text{and}~~m\mathsf{e}^{\frac{-(\kappa-\log\mu/{\tau_d})\tau n}{q}}\leq1,
\end{align}
\end{small}respectively.

\section{Examples}

\subsection{Room temperature control (common Lyapunov function)}
Consider the stochastic switched system $\Sigma$ which is a simple thermal model of a six-room building as depicted schematically in Figure \ref{fig1} and described by the following stochastic differential equations:
\begin{footnotesize}
\begin{align}\label{room}
\hspace{-1.5mm}\begin{array}{l}
                  \diff{\xi}_1=\big(\alpha_{21}\left(\xi_2-\xi_1\right)+\alpha_{31}\left(\xi_3-\xi_1\right)+\alpha_{51}\left(\xi_5-\xi_1\right)+\alpha_{e1}\left(T_e-\xi_1\right)+\alpha_{f1}\left(T_{f1}-\xi_1\right)\delta_{p2}\big)\diff{t}+\left(\sigma_{1,1}\delta_{p1}+(1-\delta_{p1})\sigma_1\right)\xi_1\diff{W^1_t},\\ 
\diff{\xi}_2=\left(\alpha_{12}\left(\xi_1-\xi_2\right)+\alpha_{42}\left(\xi_4-\xi_2\right)+\alpha_{e2}\left(T_e-\xi_2\right)\right)\diff{t}+\left(\sigma_{2,1}\delta_{p1}+(1-\delta_{p1})\sigma_2\right)\xi_2\diff{W^2_t},\\
\diff{\xi}_3=\left(\alpha_{13}\left(\xi_1-\xi_3\right)+\alpha_{43}\left(\xi_4-\xi_3\right)+\alpha_{e3}\left(T_e-\xi_3\right)\right)\diff{t}+\left(\sigma_{3,1}\delta_{p1}+(1-\delta_{p1})\sigma_3\right)\xi_3\diff{W^3_t},\\
\diff{\xi}_4=\big(\alpha_{24}\left(\xi_2-\xi_4\right)+\alpha_{34}\left(\xi_3-\xi_4\right)+\alpha_{64}\left(\xi_6-\xi_4\right)+\alpha_{e4}\left(T_e-\xi_4\right)+\alpha_{f4}\left(T_{f4}-\xi_4\right)\delta_{p3}\big)\diff{t}+\left(\sigma_{4,1}\delta_{p1}+(1-\delta_{p1})\sigma_4\right)\xi_4\diff{W^4_t},\\ 
\diff{\xi}_5=\left(\alpha_{15}\left(\xi_1-\xi_5\right)+\alpha_{e5}\left(T_e-\xi_5\right)\right)\diff{t}+\left(\sigma_{5,1}\delta_{p1}+(1-\delta_{p1})\sigma_5\right)\xi_5\diff{W^5_t},\\
\diff{\xi}_6=\left(\alpha_{46}\left(\xi_4-\xi_6\right)+\alpha_{e6}\left(T_e-\xi_6\right)\right)\diff{t}+\left(\sigma_{6,1}\delta_{p1}+(1-\delta_{p1})\sigma_6\right)\xi_6\diff{W^6_t},\\                  
\end{array}
\end{align}
\end{footnotesize}where the terms $W_t^i$, $i=1,\ldots,6$, denote the standard Brownian motion and $\delta_{pi}=1$ if $i=p$ and $\delta_{pi}=0$ otherwise. 

\begin{figure}[h]
\begin{center}
\includegraphics[width=8cm]{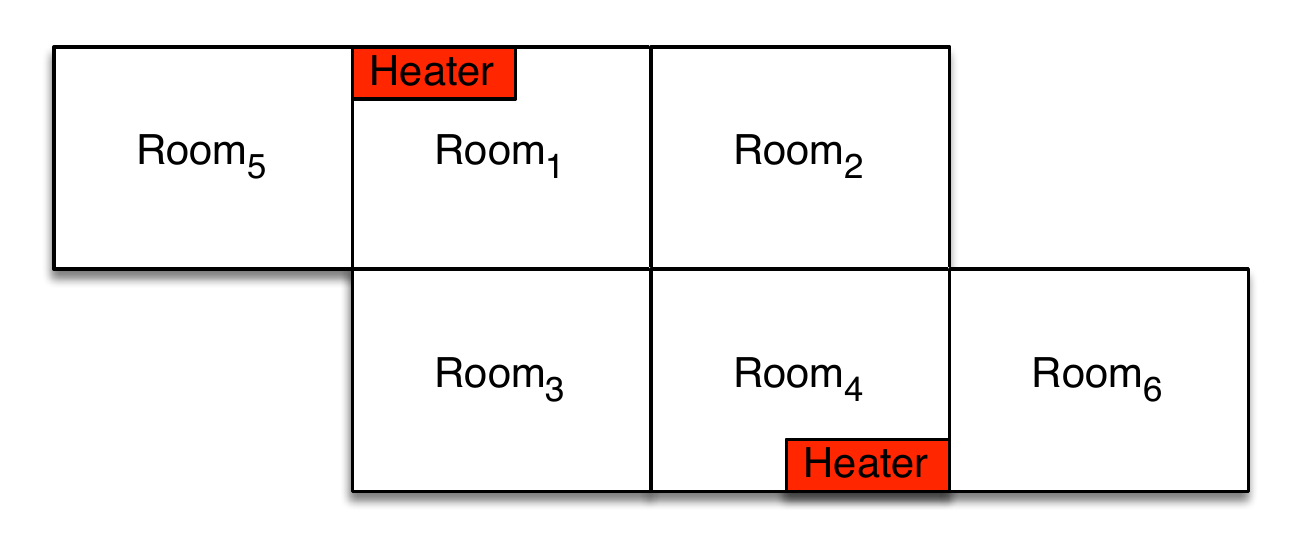}
\end{center}
\caption{A schematic of the six-room building.}
\label{fig1}
\end{figure}

Note that $\xi_i$, $i=1,\ldots,6$, denotes the temperature in each room, 
$T_e=10$ (degrees Celsius) is the external temperature, and $T_{f1}=T_{f4}=100$ are the temperatures of two heaters\footnote{Here, we assume that at most one heater is on at each instant of time.} that both can be switched off ($p=1$), 1st heater ($T_{f1}$) on and the 2nd one ($T_{f4}$) off ($p=2$), or vice versa ($p=3$). 
The drifts $f_p$ and diffusion terms $g_p$, $p = 1,2,3$, can be simply written out of \eqref{room} and are affine and linear, respectively. 
The parameters of the drifts are chosen as follows: $\alpha_{21}=\alpha_{12}=\alpha_{13}=\alpha_{31}=\alpha_{42}=\alpha_{24}=\alpha_{34}=\alpha_{43}=\alpha_{15}=\alpha_{51}=\alpha_{46}=\alpha_{64}=5\times10^{-2}$, $\alpha_{e1}=\alpha_{e4}=5\times10^{-3}$, $\alpha_{e2}=\alpha_{e3}=\alpha_{e5}=\alpha_{e6}=3.3\times10^{-3}$, and $\alpha_{f1}=\alpha_{f4}=3.6\times10^{-3}$. 
The noise parameters are chosen as $\sigma_{i,1}=0.002$ and $\sigma_i=0.003$, for $i=1,\ldots,6$. 

It can be readily verified that the function $V(x,x')=\sqrt{(x-x')^T(x-x')}$ satisfies the LMI condition (9) in \cite{majid9} with $q=1$, $P_p=I_6$, and $\widehat\kappa_p=0.0076$, for any $p\in\{1,2,3\}$. Hence, $V$ is a common $\delta$-GAS-M$_1$ Lyapunov function for $\Sigma$, satisfying conditions (i)-(iii) in Definition \ref{delta_SGAS_Lya} with $q=1$, $\ul\alpha_p({r})=\ol\alpha_p({r})=r$, $\forall r\in\R_0^+$, and $\kappa_p=0.0038$, for any $p\in\{1,2,3\}$. Using the results of Theorem \ref{theorem2}, one gets that function $\beta(r,s)=\mathsf{e}^{-\kappa_p{s}}r$ satisfies property \eqref{delta_SGUAS} for $\Sigma$.

For a \emph{source state}\footnote{Note that here we computed the source state as $x_s=\arg\min_{x\in\R^n}\max_{p\in\mathsf{P}}V(\ol\xi_{xp}(\tau),x)$ in order to have the smallest upper bound for $\ol\eta$ as in \eqref{upper_bound}.} $x_s=[18,17.72,17.72,18,17.46,17.46]^T$, a given sampling time $\tau=30$ time units, and a selected precision $\varepsilon=1$, the parameter $N$ for $\ol{S}_{\ol{\params}}(\Sigma)$, based on inequality (\ref{bisim_cond11}) in Theorem \ref{main_theorem}, is obtained as 13 and one gets $\ol\eta\leq0.1144$, where $\ol\eta$ is given in \eqref{eta}. Therefore, the resulting cardinality of the set of states for $\ol{S}_{\ol{\params}}(\Sigma)$ is $3^{13}=1594323$.

Now, consider that the objective is to design a control policy forcing the trajectories of $\Sigma$, starting from the initial condition $x_0=[11.7,11.7,11.7,11.7,11.7,11.7]^T$, to reach the region $\mathsf{D}=[19~22]^6$ in finite time and remain there forever. 
This objective can be encoded via the LTL specification $\Diamond\Box\mathsf{D}$. 

In Figure \ref{fig2}, we show several realizations of the trajectory $\xi_{x_0\upsilon}$ stemming from initial condition $x_0$ (top panels), 
as well as the corresponding evolution of synthesized switching signal $\upsilon$ (bottom panel). 
Furthermore, in Figure \ref{fig3}, we show the average value over 10000 experiments of the distance in time of the solution process $\xi_{x_0\upsilon}$ to the set $\mathsf{D}$, namely $\left\Vert\xi_{x_0\upsilon}(t)\right\Vert_\mathsf{D}$, where the point-to-set distance is defined as $\Vert{x}\Vert_\mathsf{D}=\inf_{d\in{\mathsf{D}}}\Vert x-d\Vert$.   

\begin{figure}[h]
\begin{center}
\includegraphics[width=12cm]{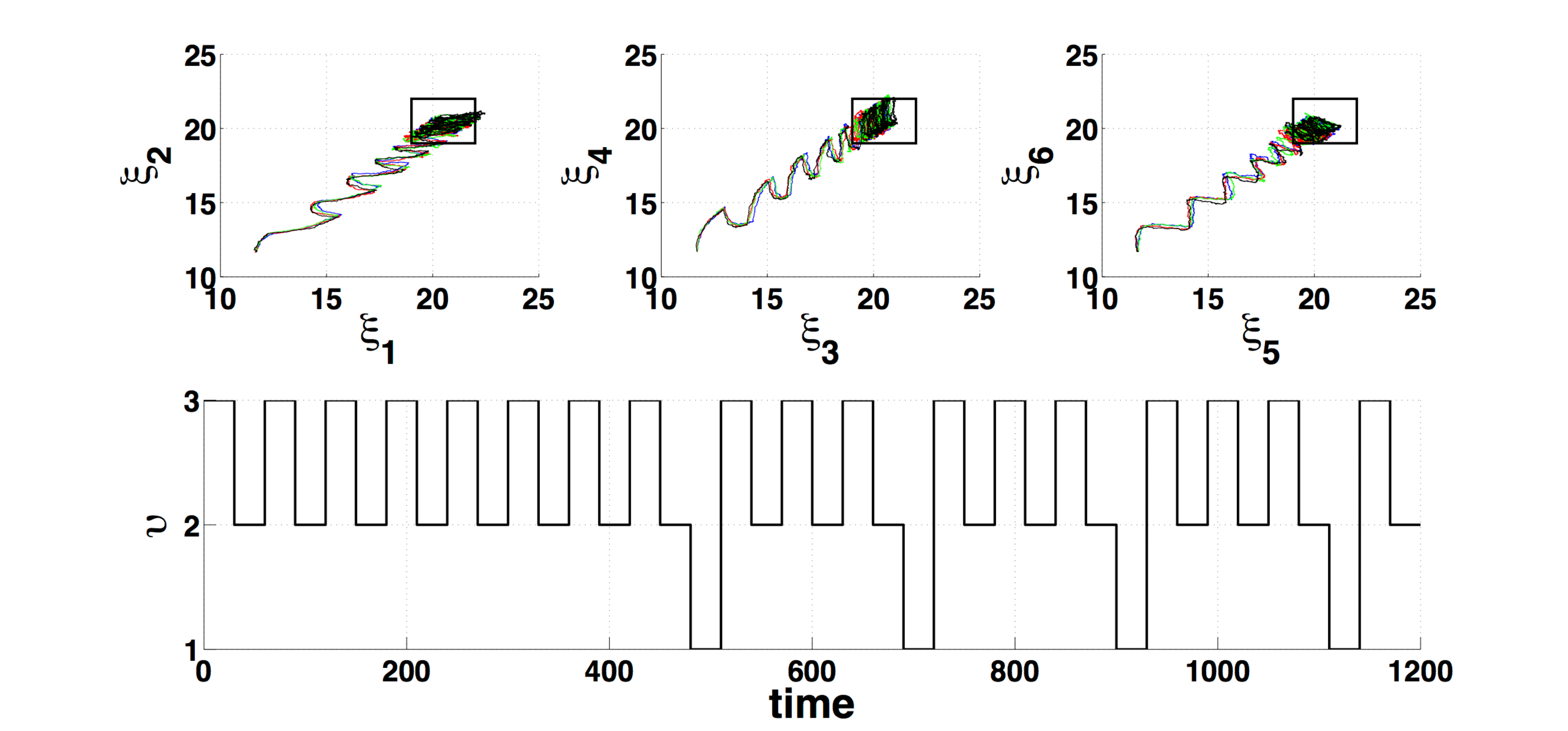}
\end{center}
\caption{A few realizations of the solution process $\xi_{x_0\upsilon}$ (top panel) and the corresponding evolution of the obtained switching signal $\upsilon$ (bottom panel).}
\label{fig2}
\end{figure}

\begin{figure}[h]
\begin{center}
\includegraphics[width=10cm]{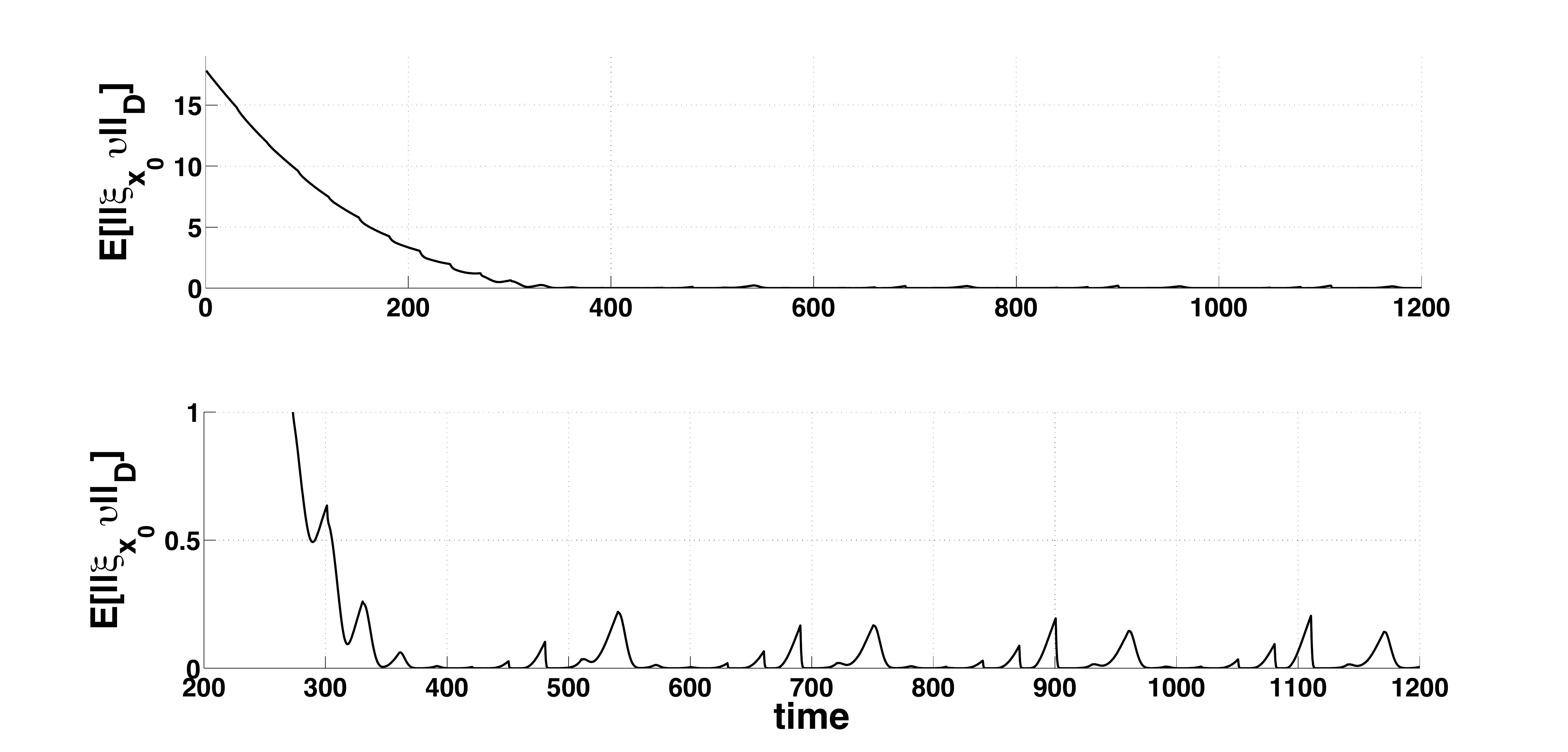}
\end{center}
\caption{The average values (over 10000 experiments) of the distance of the solution process $\xi_{x_0\upsilon}$ to the set $\mathsf{D}$ in different vertical scales.}
\label{fig3}
\end{figure}

To compute exactly the size of the symbolic model, proposed in Theorem \ref{main_theorem1}, we consider the dynamics of $\Sigma$ over the subset $\mathsf{W}=[11.7~22]^6\subset\R^6$. Note that using the sampling time $\tau=30$, the results in Theorem \ref{main_theorem1} cannot be applied because the precision $\varepsilon$ has to be lower bounded by $2.7$ as in inequality \eqref{lower_bound}. Using a bigger precision $\varepsilon=2.8$ than the one here, the same sampling time $\tau=30$ as the one here, and the inequalities \eqref{bisim_cond1} and \eqref{bisim_cond}, we obtain the state space quantization parameter as $\eta\leq0.02$. Therefore, if one uses $\eta=0.02$, the cardinality of the state set of the symbolic model $S_\params(\Sigma)$ is equal to $\left(\frac{22-11.7}{0.02}\right)^6=1.8657\times10^{16}$ which is much higher than the one of $\ol S_{\ol{\params}}(\Sigma)$, i.e. $1594323$, while having even larger precision.

\begin{remark}
By considering the dynamics of $\Sigma$ over the set $\mathsf{W}$, at least $1-10^{-5}$ confidence level, and using Hoeffding's inequality \cite{hoeffding}, one can verify that the number of samples should be at least $74152$ to empirically compute the upper bound of $\widehat\eta$ in \eqref{upper_bound4}. We compute $\widehat\eta\leq0.1208$ when $x_s=[18~17.72~17.72~18~17.46~17.46]^T$, $N=13$, and $\tau=30$. Using the results in Theorem \ref{main_theorem3} and the same parameters $\ol\params$ as the ones in $\ol{S}_{\ol\params}(\Sigma)$, one obtains $\varepsilon=0.6$ in \eqref{bisim_cond3}. Therefore, $S_{\ol\params}(\Sigma)$, with confidence at least $1-10^{-5}$, provides a less conservative precision than $\ol{S}_{\ol\params}(\Sigma)$, while having the same size as $\ol{S}_{\ol\params}(\Sigma)$. 
\end{remark}

\begin{remark}
Another advantage of using the 2nd approach in comparison with the 1st one is that one can construct only a relevant part of the abstraction given an initial condition and the specification which was the case in this example. 
\end{remark}
\subsection{Multiple Lyapunov functions}
 Consider the following stochastic switched system borrowed from \cite{girard2} and additionally affected by noise:  
\begin{small}
\begin{align}\nonumber
\Sigma:
\left\{
\begin{array}{l}
 \diff\xi_1=\left(-0.25\xi_1+p\xi_2+(-1)^p0.25\right)\diff{t}+0.01\xi_1\diff{W_t^1},\\
 \diff\xi_2=\left(\left(p-3\right)\xi_1-0.25\xi_2+(-1)^p\left(3-p\right)\right)\diff{t}+0.01\xi_2\diff{W_t^2},
 \end{array}\right.
\end{align}
\end{small}where $p=1,2$. 
The noise-free version of $\Sigma$ is endowed with stable subsystems, 
however it can globally exhibit unstable behaviors for some switching signals \cite{girard2}. 
Similarly, $\Sigma$ does not admit a common $\delta$-GAS-M$_q$ Lyapunov function. 
We are left with the option of seeking for multiple Lyapunov functions. 
It can be indeed shown that each subsystem $\Sigma_p$ admits a $\delta$-GAS-M$_1$ Lyapunov function of the form \begin{small}$V_p(x_1,x_2)=\sqrt{(x_1-x_2)^TP_p(x_1-x_2)}$\end{small}, with \begin{small}$P_1=\left[ {\begin{array}{cc}
2&0\\
0&1\\
 \end{array} } \right]$\end{small} and \begin{small}$P_2=\left[ {\begin{array}{cc}
1&0\\
0&2\\
 \end{array} } \right].$\end{small}
These $\delta$-GAS-M$_1$ Lyapunov functions have the following characteristics: 
$\ul\alpha({r})=r$, $\ol\alpha({r})=2r$, $\kappa=0.2498$. 
Note that $V^2_p(x_1,x_2)$ is also a $\delta$-GAS-M$_2$ Lyapunov function for $\Sigma_p$, where $p\in\{1,2\}$, satisfying the requirements in Lemma \ref{lem:moment est}. 
Furthermore, the assumptions of Theorem \ref{multiple_lyapunov} hold by choosing a parameter $\mu=\sqrt{2}$ and a dwell time $\tau_d=2>\log{\mu}/\kappa$. 
In conclusion, the stochastic switched system $\Sigma$ is $\delta$-GUAS-M$_1$. 
 
Let us work within the set $\mathsf{D}=[-5,~5]\times[-4,~4]$ of the state space of $\Sigma$. 
For a sampling time $\tau=0.5$, using inequality (\ref{lower_bound_mul}) the precision $\varepsilon$ is lower bounded by $1.07$. 
For a chosen precision $\varepsilon=1.2$, the discretization parameter $\eta$ of $S_{\params}(\Sigma)$, 
obtained from Theorem \ref{main_theorem2}, 
is equal to $0.0083$. 
The resulting number of states in $S_{\params}(\Sigma_{\tau_d})$ is $9310320$, taking 3.4 MB memory space, where the computation of the abstraction $S_\params(\Sigma_{\tau_d})$ has been performed via the software tool \textsf{CoSyMA}~\cite{CoSyMA} on an iMac with CPU $3.5$GHz Intel Core i$7$.
The CPU time needed for computing the abstraction has amounted to $22$ seconds. 
 
Consider the objective to design a controller (switching signal) forcing the first moment of the trajectories of $\Sigma$ to stay within $\mathsf{D}$, 
while always avoiding the set $\mathsf{Z}=[-1.5,1.5]\times[-1,1]$. 
This corresponds to the following LTL specification: $\Box\mathsf{D}\wedge\Box\neg{\mathsf{Z}}$. 
The CPU time needed for synthesizing the controller has amounted to $12.46$ seconds. 
Figure \ref{fig22} displays several realizations of the closed-loop trajectory of $\xi_{x_0\upsilon}$, 
stemming from the initial condition $x_0=(-4,-3.8)$ (left panel), 
as well as the corresponding evolution of the switching signal $\upsilon$ (right panel). 
Furthermore, Figure \ref{fig22} (middle panels) shows the average value (over 10000 experiments) of the distance in time of the solution process $\xi_{x_0\upsilon}$ to the set $\mathsf{D}\backslash{\mathsf{Z}}$, namely $\left\Vert\xi_{x_0\upsilon}(t)\right\Vert_{\mathsf{D}\backslash{\mathsf{Z}}}$. 
Notice that the empirical average distance is significantly lower than the theoretical precision $\varepsilon=1.2$. 

\begin{figure}
\includegraphics[width=13cm]{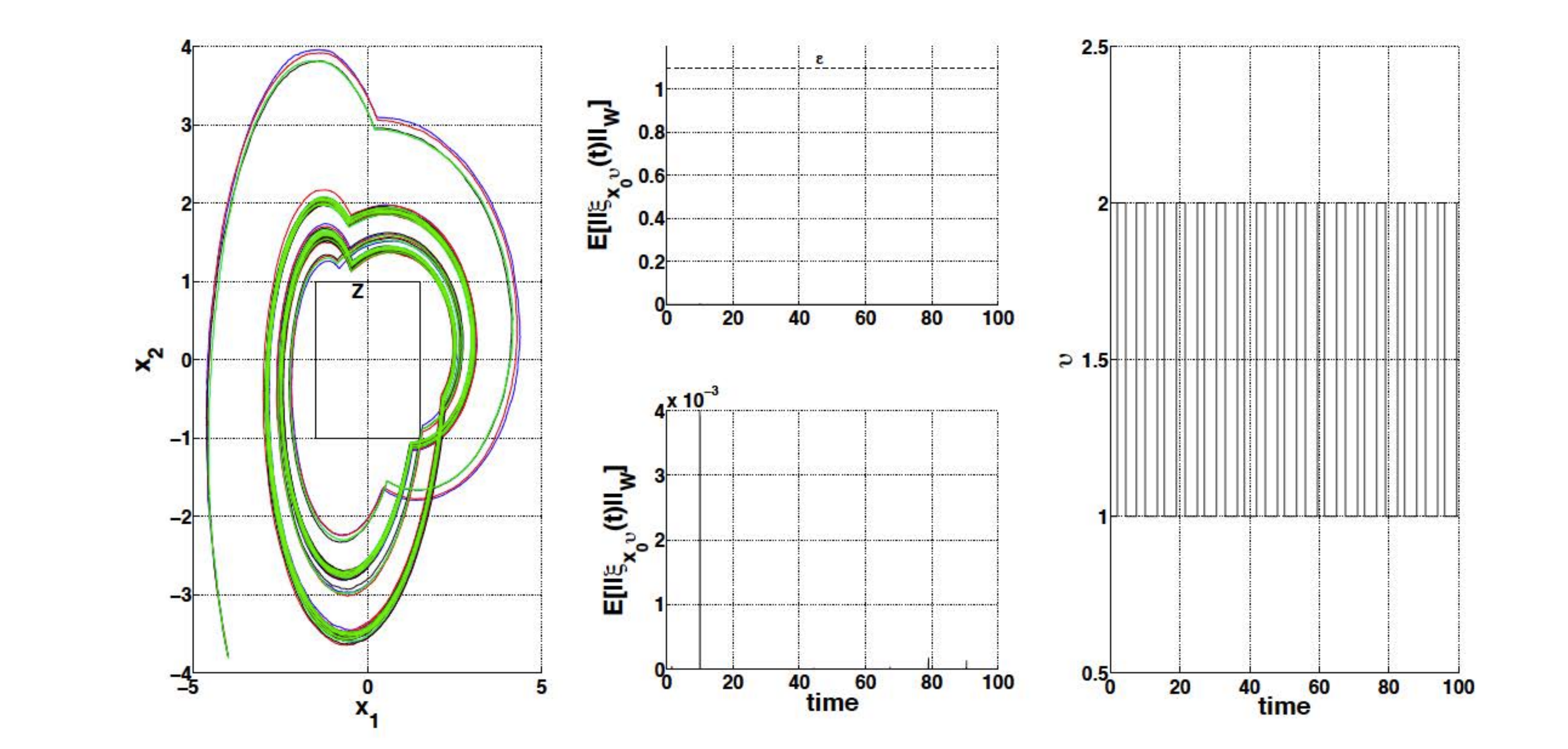}
\caption{Several realizations of the closed-loop trajectory $\xi_{x_0\upsilon}$ with initial condition \mbox{$x_0=(-4,-3.8)$} (left panel). 
Average values (over 10000 experiments) in time of the distance of solution process $\xi_{x_0\upsilon}$ to the set $\mathsf{W}=\mathsf{D}\backslash{\mathsf{Z}}$, in different vertical scales (middle panel). 
Evolution of the synthesized switching signal $\upsilon$ (right panel).}
\label{fig22}
\end{figure}

Note that using the same sampling time $\tau=0.5$, the same precision $\varepsilon=1.2$, and the inequalities \eqref{bisim_cond_mul0} in Theorem \ref{main_theorem33}, we obtain the temporal horizon as $N=22$. Therefore, the cardinality of the state set of the symbolic model $\ol{S}_{\ol{\params}}(\Sigma_{\tau_d})$ is equal to $2^{22}=4194304$ which is roughly half of the one of $S_{\params}(\Sigma_{\tau_d})$, i.e. $9310320$.

\section{Conclusions}
This work has shown that any stochastic switched system $\Sigma$ (resp. $\Sigma_{\tau_d}$), admitting a common (multiple) $\delta$-GAS-M$_q$ Lyapunov function(s), and within a compact set of states, admits an approximately bisimilar symbolic model $S_\params(\Sigma)$ (resp. $S_\params(\Sigma_{\tau_d})$) requiring a space discretization or $S_{\ol{\params}}(\Sigma)/\ol{S}_{\ol{\params}}(\Sigma)$ (resp. $S_{\ol{\params}}(\Sigma_{\tau_d})/\ol{S}_{\ol{\params}}(\Sigma_{\tau_d})$) without any space discretization. Furthermore, we have provided a simple criterion by which one can choose between the two proposed abstraction approaches the most suitable one (based on the size of the abstraction) for a given stochastic switched system.
The constructed symbolic models can be used to synthesize controllers enforcing complex logic specifications, 
expressed via linear temporal logic or as automata on infinite strings. 


\section{Acknowledgements}
The authors would like to thank Ilya Tkachev for fruitful technical discussions. 

\bibliographystyle{plain}
\bibliography{reference}
\end{document}